\numberwithin{equation}{section}
\newcommand{\C}        {\mathbb{C}}
\newcommand{\cc}[1]      {\overline{{#1}}}
\newcommand{\Vect}   {\mathsf{Vect}}
\newcommand{\tr}         {\operatorname{\mathsf{tr}}}
\newcommand{\boxtensor}[1] {\mathbin{\boxtimes_{\scriptscriptstyle{#1}}}}
\newcommand{\circtensor} {\setbox0\hbox{\large$\circlearrowleft$} \rlap{\hbox to\wd0{\hss$\times$\hss}}\box0}
\newcommand{\circtensorsmall} {\setbox0\hbox{$\circlearrowleft$} \rlap{\hbox to\wd0{\hss$\times$\hss}}\box0}
\newcommand{\swe}[1]    {{\scriptscriptstyle{(#1)}}}
\newcommand{\id}         {\operatorname{\mathsf{id}}}
\newcommand{\unit}         {\operatorname{1}}
\newcommand{\rr}         {\rightarrow}
\newcommand{\act}        {\operatorname{\triangleright}}
\newcommand{\actrd}        {\operatorname{\triangleright}^{\#}}
\newcommand{\ractrd}      {\operatorname{\triangleleft}^{\#}}
\newcommand{\actld}        {\leftidx{{}^{\#}}\!{\operatorname{\triangleright}}{}}
\newcommand{\ractld}       {\leftidx{{}^{\#\!}}\!{\operatorname{\triangleleft}}{}}
\newcommand{\ract}       {\operatorname{\triangleleft}}
\newcommand{\Cat}[1]         {\operatorname{\mathcal{#1}}}
\newcommand{\Catc}[1]         {\operatorname{\cc{\mathcal{#1}}}}
\newcommand{\op}[1]           {\Cat{#1}^{\operatorname{\mathrm{op}}}}
\newcommand{\Fun}         {\operatorname{\mathrm{Fun}}}
\newcommand{\Funbal}         {\operatorname{\mathrm{Fun}}^{\mathrm{bal}}}
\newcommand{\Funle}         {\operatorname{\mathrm{Fun}}^{\mathrm{l.e.}}}
\newcommand{\Funre}         {\operatorname{\mathrm{Fun}}^{\mathrm{r.e.}}}
\newcommand{\Funbalre}         {\operatorname{\mathrm{Fun}}^{\mathrm{bal, r.e.}}}
\newcommand{\Hom}        {\operatorname{\mathsf{Hom}}}
\newcommand{\End}        {\operatorname{\mathsf{End}}}
\newcommand{\Obj}        {\operatorname{\mathsf{Obj}}}
\def\Algbim {\operatorname{\mathsf{BimAlg}}} %{\Cat{A}lgbim}
\newcommand{\Bimod}[5]{\sideset{^{\scriptscriptstyle{#1}}_{\scriptscriptstyle{#2}}}{^{\scriptscriptstyle{#4}}_{\scriptscriptstyle{#5}}}{\operatorname{#3}}}
\newcommand{\Bim}[3] {\Bimod{}{#1}{#2}{}{#3}}
\newcommand{\LCC}  {\Bimod{}{\Cat{C}}{\Cat{C}}{}{}}
\newcommand{\RCC}  {\Bimod{}{}{\Cat{C}}{}{\Cat{C}}}
\newcommand{\CM}  {\Bimod{}{\Cat{C}}{\Cat{M}}{}{}}
\newcommand{\MC}  {\Bimod{}{}{\Cat{M}}{}{\Cat{C}}}
\newcommand{\CN}  {\Bimod{}{\Cat{C}}{\Cat{N}}{}{}}
\newcommand{\NC}  {\Bimod{}{}{\Cat{N}}{}{\Cat{C}}}
\newcommand{\DN}  {\Bimod{}{\Cat{D}}{\Cat{N}}{}{}}
\newcommand{\CMD}  {\Bimod{}{\Cat{C}}{\Cat{M}}{}{\Cat{D}}}
\newcommand{\DMC}  {\Bimod{}{\Cat{D}}{\Cat{M}}{}{\Cat{C}}} 
\newcommand{\CMC}  {\Bimod{}{\Cat{C}}{\Cat{M}}{}{\Cat{C}}}
\newcommand{\VectMVect}  {\Bimod{}{\Vect}{\Cat{M}}{}{\Vect}}
\newcommand{\CNC}  {\Bimod{}{\Cat{C}}{\Cat{N}}{}{\Cat{C}}}
\newcommand{\CMRFC}  {\Bimod{}{\Cat{C}}{\Cat{M}}{\Rho \mathsf{F}}{\Cat{C}}}
\newcommand{\CND}  {\Bimod{}{\Cat{C}}{\Cat{N}}{}{\Cat{D}}}
\newcommand{\DNC}  {\Bimod{}{\Cat{D}}{\Cat{N}}{}{\Cat{C}}}
\newcommand{\CNE}  {\Bimod{}{\Cat{C}}{\Cat{N}}{}{\Cat{E}}}
\newcommand{\END}  {\Bimod{}{\Cat{E}}{\Cat{N}}{}{\Cat{D}}}
\newcommand{\EKD}  {\Bimod{}{\Cat{E}}{\Cat{K}}{}{\Cat{D}}}
\newcommand{\ENC}  {\Bimod{}{\Cat{E}}{\Cat{N}}{}{\Cat{C}}} 
\newcommand{\CK}  {\Bimod{}{\Cat{C}}{\Cat{K}}{}{}}
\newcommand{\CKC}  {\Bimod{}{\Cat{C}}{\Cat{K}}{}{\Cat{C}}}
\newcommand{\CLC}  {\Bimod{}{\Cat{C}}{\Cat{L}}{}{\Cat{C}}}
\newcommand{\DDD}  {\Bimod{}{\Cat{D}}{\Cat{D}}{}{\Cat{D}}}
\newcommand{\CCC}  {\Bimod{}{\Cat{C}}{\Cat{C}}{}{\Cat{C}}}
\newcommand{\tCCC}  {\Bimod{\Tau}{\Cat{C}}{\Cat{C}}{}{\Cat{C}}}
\newcommand{\CMDrd}  {\Bimod{}{\Cat{C}}{\Cat{M}}{\#}{\Cat{D}}} 
\newcommand{\CMCrd}  {\Bimod{}{\Cat{C}}{\Cat{M}}{\#}{\Cat{C}}} 
\newcommand{\CMDld}  {\Bimod{\#}{\Cat{C}}{\Cat{M}}{}{\Cat{D}}}
\newcommand{\ENCld}  {\Bimod{\#}{\Cat{E}}{\Cat{N}}{}{\Cat{C}}}
\newcommand{\Mod}      {\operatorname{\mathsf{Mod}}}
\newcommand{\Funl}[2]     {\operatorname{\mathrm{Fun}}_{\scriptscriptstyle{#1}}{(#2)}}
\newcommand{\Funlre}[2]     {\operatorname{\mathrm{Fun}}^{\mathrm{r.e.}}_{\scriptscriptstyle{#1}}{(#2)}}
\newcommand{\BimCat}       {\operatorname{\mathsf{BimCat}}} %{\mathfrak{B}}%{\underline{\underline{\mathsf{B}}}}  
\newcommand{\Categ}   {\mathsf{Cat}}
\newcommand{\cent}     {\mathcal{Z}}
\newcommand{\centCt}   {\mathcal{Z}_{\Cat{C}}^{\Tau}}
\newcommand{\centCF}   {\mathcal{Z}_{\Cat{C}}^{\mathsf{F}}}
\newcommand{\ev}[1]   {\operatorname{\mathsf{ev}}_{#1}}
\newcommand{\coev}[1]   {\operatorname{\mathsf{coev}}_{#1}}
\newcommand{\Rmnum}[1]{\expandafter\@slowromancap\romannumeral #1@}
\newtheorem{lemma}{Lemma}[section]
\newtheorem{proposition}[lemma]{Proposition}
\newtheorem{theorem}[lemma]{Theorem} 
\newtheorem{corollary}[lemma]{Corollary}
\newtheorem{definition}[lemma]{Definition}
\newtheorem{notation}[lemma]{Notation}
\newtheorem{example}[lemma]{Example}
\newtheorem{remark}[lemma]{Remark}
\newenvironment{theorem-1}[1][Theorem 1]{\begin{trivlist}
  \item[\hskip \labelsep {\bfseries #1}]}{\end{trivlist}}
\newenvironment{theorem-2}[1][Theorem 2]{\begin{trivlist}
  \item[\hskip \labelsep {\bfseries #1}]}{\end{trivlist}}
\newenvironment{theorem-3}[1][Theorem 3]{\begin{trivlist}
  \item[\hskip \labelsep {\bfseries #1}]}{\end{trivlist}}
\newenvironment{cor-n}[1][Corollary]{\begin{trivlist}
  \item[\hskip \labelsep {\bfseries #1}]}{\end{trivlist}}
\newcommand{\refitem}[1] {~\textit{\ref{#1})}}
\newcommand\qedsymbol{\hbox{$\boxempty$}}
\newcommand\qed{\relax\ifmmode\boxempty\else
  {\unskip\nobreak\hfil\penalty50\hskip1em\null\nobreak\hfil\qedsymbol
    \parfillskip=\z@\finalhyphendemerits=0\endgraf}\fi}
\newenvironment{proof}[1][{}]{\par\noindent Proof{#1}. }{\qed}
\newenvironment{propositionlist}{\begin{enumerate}}{\end{enumerate}}
\newenvironment{definitionlist}{\begin{enumerate}}{\end{enumerate}}
\definecolor{DarkViolet} {rgb}{0.580392,0.000000,0.827450}
\tikzset{
  doublearrow/.style={draw, thin, double distance=3pt, ->, >=implies},
  ldoublearrow/.style={draw, thin, double distance=3pt, <-, >=implies},
  thirdline/.style={draw, thin, ->, >=implies}, 
  lthirdline/.style={draw, thin, <-, >=implies},
  equality/.style={draw, thin, double distance=3pt, -} 
}
\def\latearrow#1#2#3#4{%
  \toks@\expandafter{\tikzcd@savedpaths\path[/tikz/commutative diagrams/every arrow,#1]}%
  \global\edef\tikzcd@savedpaths{%
    \the\toks@%
    (\tikzmatrixname-#2)% \noexpand\tikzcd@sourceanchor)%
    to%
    node[/tikz/commutative diagrams/every label] {$#4$}
    (\tikzmatrixname-#3)% \noexpand\tikzcd@targetanchor)
    ;}}
\def\Rho  {\mathsf{E}}
\def\Tau  {\mathsf{D}} % or {circular tensor product}
\def\ct   {ca\-te\-go\-ry-va\-lued trace}
\def\std  {\mathrm{std}}
\def\be            {\begin{equation}}
  \def\complex       {{\ensuremath{\mathbbm C}}}
  \def\complexx      {{\ensuremath{\mathbbm C}^\times_{}}}
  \def\ee            {\end{equation}}
\def\eq            {\,{=}\,}
\newcommand\eqpic[4]{\begin{eqnarray}
    \begin{picture}(#2,#3){}\end{picture}\nonumber\\
    \raisebox{-#3pt}{ \begin{picture}(#2,#3) #4 \end{picture} }
    \label{#1} \\~\nonumber \end{eqnarray} }
\def\Gdiag         {G_{\mathrm{diag}}}
\def\GGVectGHtau   {\Vect(G{\times}G)_{G;H}^{\omega;\theta}}
\def\ie            {\imath_1}
\newcommand\Includepic[1] {{\begin{picture}(0,0)(0,0)
      \scalebox{.32}{\includegraphics{fusV2_#1.eps}}\end{picture}}}
\def\iz            {\imath_2}
\def\lght          {\mathcal L_{G,H,\tau}}
\def\ohr           {\reflectbox{$\rho$}}
\def\sll           {\,{\backslash}\hspace{-2.5pt}{\backslash}\hspace{.6pt}}
\def\srr           {\hspace{-.2pt}\reflectbox{$\backslash$}\hspace{-2.5pt}\reflectbox{$\backslash$}\hspace{-.6pt}}
\def\To            {\,{\to}\,}
\begin{document}

     %        \versionno 

\begin{flushright}
  {\sf MPI/14-65}\\
  {\sf ZMP-HH/14-24}\\
  {\sf Hamburger$\;$Beitr\"age$\;$zur$\;$Mathematik$\;$Nr. 531}\\[2mm]
  %%   December 2014 
\end{flushright}

  \vskip 22mm{}

\begin{center}

  {\Large\bf A TRACE FOR BIMODULE CATEGORIES}

  \vskip 12mm{}

  {\large \  \ J\"urgen Fuchs\,$^{\,a}, \quad$ Gregor Schaumann\,$^{\,b}, \quad$ Christoph Schweigert\,$^{\,c}$
  }

  \vskip 12mm

  \it$^a$
  Teoretisk fysik, \ Karlstads Universitet\\
  Universitetsgatan 21, \ S\,--\,651\,88\, Karlstad \\[9pt]
  \it$^b$
  Max-Planck-Institut f\"ur Mathematik\\
  Vivatsgasse 7, \ D\,--\,53\,111 Bonn
  \\[9pt]
  \it$^c$
  Fachbereich Mathematik, \ Universit\"at Hamburg\\
  Bereich Algebra und Zahlentheorie\\
  Bundesstra\ss e 55, \ D\,--\,20\,146\, Hamburg

\end{center}

\vskip 3.2em

\noindent{\sc Abstract}\\[3pt]
We study a 2-functor that assigns to a bimodule category over a finite 
$\Bbbk$-linear tensor category a $\Bbbk$-linear abelian category. This 
2-functor can be regarded as a category-valued trace for 1-morphisms in the 
tricategory of finite tensor categories. It is defined by a universal property 
that is a categorification of Hochschild homology of bimodules over an algebra.
We present several equivalent realizations of this
2-functor and show that it has a coherent cyclic invariance.
\\
Our results have applications to categories associated to circles in 
three-dimensional topological field theories with defects. This is made 
explicit for the subclass of Dijkgraaf-Witten topological field theories.

\newpage

%%%%%%%%%%%%%%%%%%%%%%%%%%%%%%%%%%%%%%%%%%%%%%%%%%%%%%%%%%%%%%%%%%%%%%%%%%%%%%%% 

\tableofcontents
\newpage

%%%%%%%%%%%%%%%%%%%%%%%%%%%%%%%%%%%%%%%%%%%%%%%%%%%%%%%%%%%%%%%%%%%%%%%%%%%%%%%% 

\section{Introduction}

A useful invariant for algebras $A$ over a field $\Bbbk$ is the zeroth 
Hochschild homology $H\!H_0(A) = A/[A,A]$. 
Here the relevant structure of $A$ is the one as a bimodule over itself,
and accordingly this invariant can be generalized to the $\Bbbk$-vector space 
$H\!H_0(A,M) \,{=}\, M / [A,M]$, where $M$ is an arbitrary $A$-bimodule and
$[A,M]$ is the subspace generated by all
expressions of the form $am \,{-}\, ma$ for $m \,{\in}\, M$ and $a \,{\in}\, A$.

The natural categorical setting for Hochschild homology is the bicategory
$\Algbim_\Bbbk$ which has $\Bbbk$-algebras as objects, bimodules as 1-morphisms and
morphisms of bimodules as 2-morphisms. In this description the Hochschild homology
provides us, for any $A \,{\in}\, \Algbim$, with a functor 
  \begin{equation}
  \label{HH0AM}
  \begin{array}{rcl} 
    \End_{\Algbim}(A) = A\mbox{-bimod} &\!\! \longrightarrow \!\!& \Vect_\Bbbk \,,
    \\{}\\[-9pt]
    M &\!\! \longmapsto \!\!& H\!H_0(A,M) \,.
  \end{array} 
  \end{equation}

This is a functor from an endomorphism category of a bicategory to a category.
It is natural to think about it as a 2-trace (see \cite[Def.\,4.1]{PS2trace} for a 
formulation of this concept) on the bicategory $\Algbim_\Bbbk$, taking
values in the category $\Vect_\Bbbk$. Indeed, it is cyclically invariant:
for any pair of bimodules ${}_AM_B$ and ${}_BN_A$ there is an isomorphism
  \begin{equation}
    H\!H_0 (A,{}_AM_B \,{\otimes_B}\, {}_BN_A) \,\cong\, H\!H_0 (B,{}_BN_A \,{\otimes_A}\, {}_AM_B)
  \end{equation}
of $\Bbbk$-vector spaces.

In the present paper, we are interested in a similar structure one step higher in the
categorical ladder, i.e.\ in a tricategorical setting. More specifically, we
work in the tricategory $\BimCat$, which has finite tensor categories as objects
and bimodule categories, bimodule functors and bimodule
natural transformations as 1-, 2- and 3-morphisms, respectively.
As the higher-categorical analogue of the category $\Vect_\Bbbk$ we take the
2-category $\Categ$ of $\Bbbk$-linear categories.
Thus we wish to obtain, for any finite tensor category $\Cat C$, a 2-functor 
\begin{equation}
  \begin{array}{rcl} 
    \End_{\BimCat}(\Cat C) = \Cat C\mbox{-bimod} &\!\! \longrightarrow \!\!& \Categ 
    \\{}\\[-9pt]
    \Cat M &\!\! \longmapsto \!\!& \circtensor \Cat M 
  \end{array}
\end{equation}
from the bicategory of $\Cat C$-bimodule categories to $\Bbbk$-linear categories
that can be regarded as a generalization of Hochschild homology and shares the
cyclic invariance property of a trace. This is what we achieve in this paper.
We refer to this 2-functor, which takes values in $\Categ$, as a \ct.

The invariance property of a trace is with respect to horizontal composition. In the 
tricategory $\BimCat$, this composition is given by the relative Deligne tensor product 
of bimodule categories. In fact, many of our constructions are inspired by constructions
involving the relative tensor product. 
In order for the relative tensor product to provide a horizontal composition, 
we adopt a definition via a universal property for balanced right exact functors 
with specified adjoint equivalence (Definition \ref{definition:tensor-prod}). 
Various realizations of the relative tensor product are presented in Section 
\ref{sec:constr-tens-prod}; these realizations ensure in particular its existence.
We also present, in Proposition \ref{proposition:twisted-center-tensor}, a new realization 
in terms of a twisted center of a bimodule category, a notion that is introduced in 
Definition \ref{definition:twisted-center}. The twisting is by the double right 
dual functor. In general, we do not require the objects of $\BimCat$ to
be pivotal and clarify some subtleties that have been omitted in the literature.
In Proposition \ref{prop:rel-tensor-with-algebra}, we present another
realization of the relative tensor product, for the case that one of the module
categories is expressed as modules over an algebra object in the monoidal category.

An abstract definition of a \ct\ is given in Definition \ref{definition:circ-tensor-prod}. 
We then provide various realizations: In Theorem \ref{thm:models-circ-tensor}
as particular forms of a twisted center and of a relative tensor product, and
as certain functor categories; and in Proposition \ref{prop:AcaMmod} as
category of bimodules over suitable algebra objects. The cyclicity
  \begin{equation}
  \circtensor (\CMD \,{\boxtimes_{\Cat D}}\, \DNC) \,\simeq\,
  \circtensor (\DNC \,{\boxtimes_{\Cat C}}\, \CMD) 
  \end{equation}
of the \ct\ is established in Proposition \ref{proposition:circ-equiv-1}.

The structures discussed in Sections \ref{sec2}\,--\,\ref{sec4} are motivated by and
have applications to surface defects in three-dimensional topological field theories
of Turaev-Viro type. Turaev-Viro theories are based on spherical fusion categories.
A surface defect separating Turaev-Viro theories of type ${\mathcal A}_1$ and
${\mathcal A}_2$ is labeled by an ${\mathcal A}_1$-${\mathcal A}_2$-bimodule category.
 % XXX  Bindestrich eingefuegt.
Several such surface defects can meet in a line segment, a generalized Wilson line, 
which has to be labeled by an object of a finite abelian category. 
As we explain in Section \ref{sec:5}, the \ct\ enters crucially in the construction 
of that category. As an illustration and non-trivial check that our definition 
of a \ct\ makes sense, we compute the category of Wilson lines for a subclass of 
Turaev-Viro theories, namely Dijkgraaf-Witten theories, for which an 
independent gauge-theoretic construction is available.

In Section \ref{sec4} we put the \ct\ studied in Section \ref{sec3} in its proper 
higher-ca\-tegorical context. We introduce the notion of tricategories with 3-trace, 
show that it provides coherent cyclic equivalences on composable morphisms, and 
establish in Theorem \ref{theorem:ct-is-3trace} that the \ct\ 
provides a 3-trace on the tricategory $\BimCat$ with values in the bicategory $\Categ$.

%%%%%%%%%%%%%%%%%%%%%%%%%%%%%%%%%%%%%%%%%%%%%%%%%%%%%%%%%%%%%%%%%%%%%%%%%%%%%%%% 

\section{The relative tensor product of bimodule categories} \label{sec2} 

We start by collecting and extending pertinent results. Throughout the paper, all 
categories are assumed to be finite, abelian and linear over a fixed field $\Bbbk$.
Further we require all functors and 
natural transformations to be linear unless specified otherwise (we often encounter 
\emph{bi}linear functors out of a Cartesian product of two linear categories).

\subsection{Module categories over finite tensor categories}

First we collect relevant information about module categories over finite tensor categories. 
Recall \cite{FinTen} that a \emph{finite category} $\Cat{C}$ is an abelian category enriched over
$\Bbbk$-vector spaces such that every object has finite length and a projective cover and such that
the set $I$ of isomorphism classes of simple objects is finite.

A \emph{finite tensor category} over a field $\Bbbk$ is a finite $\Bbbk$-linear monoidal 
category $\Cat{C}$ with simple tensor unit $\unit$ and with a left and a right 
duality (see below for our conventions). We will heavily use  these dualities.
Our conventions regarding the duality in a finite tensor category 
$\Cat{C}$ with unit $\unit$ are as follows. A \emph{right dual} object $c^{\vee}$ 
comes with morphisms $\ev{c}\colon c^{\vee} \,{\otimes}\, c \,{\rightarrow}\, \unit$ and 
$\coev{c}\colon \unit \,{\rightarrow}\, c \,{\otimes}\, c^{\vee}$ that satisfy the usual 
snake identities, so that there are natural isomorphisms
  \begin{equation}
  \label{eq:adjunction-hom}
  \Hom_{\Cat{C}}(x \,{\otimes}\, c, y) \cong \Hom_{\Cat{C}}(x, y \,{\otimes}\, c^{\vee})
  \quad \text{ and } \quad
  \Hom_{\Cat{C}}(x,c \,{\otimes}\, y) \cong \Hom_{\Cat{C}}(c^{\vee} \,{\otimes}\, x,y)
  \end{equation}
for all $x,y \,{\in}\, \Cat{C}$. Our convention regarding left duals $^{\vee\!} c$ are analogous. 
A monoidal category with left and right duals is called rigid. 

Next we recall the notion of a module category over a tensor category, which categorifies the 
notion of a module over an algebra, and which is of central interest in this paper. 
For a general introduction, notation and examples, see \cite{Ostrik}.

\begin{definition}{}
  \label{definition:mod-cat}
  Let $\Cat{C}$ be a finite tensor category. 
  A $($left$)$ $\Cat{C}$-\emph{module category} is a  $\Bbbk$-linear finite abelian category $\Cat{M}$
  together with a bilinear functor 
  \begin{equation}
    \act :\quad \Cat{C} \times \Cat{M} \rr \Cat{M},
  \end{equation}
  that is exact in each argument, called the \emph{action} of $\Cat{C}$ on $\Cat{M}$, and 
  with natural isomorphisms
  \begin{equation}
    \label{eq:structures-module-category}
    \mu_{x,y,m}^{\Cat{M}}:\quad (x\otimes y) \act m \rr x\act (y \act m)  \qquad {\rm and} \qquad
    \lambda_m^{\Cat{M}}:\quad \unit \act m \rr m
  \end{equation}
for all $x, y \,{\in}\, \Cat{C}$ and all $m \,{\in}\, \Cat{M}$, called  the module
constraints, such that the obvious 
   % XXX  
     pentagon and triangle diagrams commute {\rm(}see also {\rm \cite[Def.\,2.6]{Ostrik})}.
\end{definition}

When we want to emphasize that $\Cat{M}$ is a left $\Cat{C}$-module category, we denote it 
instead by $\CM$. To emphasize in addition the structure morphisms, we sometimes write 
$(\CM,\mu^{\Cat{M}},\lambda^{\Cat{M}})$; 
whenever it is unambiguous, we denote the constraints just by $\mu$ and $\lambda$.

The notion of a right $\Cat{C}$-module category $\MC$ is analogous, involving a bilinear
functor $\ract\colon \MC \,{\times}\, \Cat{C} \,{\rr}\, \MC$. We denote the constraint for the unit  
of a right module category  by $\rho^{\Cat{M}}_{m}\colon m \ract \unit_{\Cat{C}} \,{\rightarrow}\, m$.
When it is otherwise ambiguous, we denote a left module action on a category $\Cat{M}$ by 
$\mu^{\Cat{M},l}$ or just $\mu^{l}$, and a right module action by $\mu^{\Cat{M},r}$ or $\mu^{r}$.

\begin{definition}
  A left $\Cat{C}$-module category $\CM$ over a finite tensor category $\Cat{C}$ is called \emph{exact}
  if $\Cat{M}$ is finite and if for every projective object $P \,{\in}\, \Cat{C}$ and every object 
  $m \,{\in}\, \Cat{M}$ the object $P \act m$ of $\Cat{M}$ is projective. 
\end{definition}

Exact module categories are characterized by the property that all module functors between 
them are exact \cite[Prop.\,3.16]{FinTen}.
As shown in \cite[Example 3.3]{FinTen}, a module category over a semisimple tensor category is
exact if and only if it is semisimple as an abelian category.

%%%%%%%%%%%%%%%%%%%%%%%%%%%%%%%%%%%%%%%%%%%%%%%%%%%%%%%%%%%%%%%%%%%%%%%%%%%%%%%% 

Next we recall morphisms and 2-morphisms between module categories.

\begin{definition}{\rm \cite{Ostrik}}
A $\Cat{C}$-\emph{module functor} ${\mathsf{F}}\colon \CM \,{\rr}\, \CN$ is a linear
functor ${\mathsf{F}}$ together with natural isomorphisms 
$\phi^{\mathsf{F}}_{x,m}\colon \mathsf{F}(x \act m) \,{\rr}\, x  \act \mathsf{F}(m)$,
such that the obvious 
   % XXX  
     pentagon and triangle diagrams commute {\rm(}see also {\rm \cite[Def.\,2.7(i)]{Ostrik})}.
\end{definition}

\noindent
We sometimes write $(\mathsf{F},\phi^\mathsf{F})$ for a
module functor and call $\phi^\mathsf{F}$ a left module constraint for $\mathsf{F}$.
Whenever it is unambiguous, we denote the constraint just by $\phi$. There is the analogous 
definition for module functors between right $\Cat{C}$-module categories.

%%%%%%%%%%%%%%%%%%%%%%%%%%%%%%%%%%%%%%%%%%%%%%%%%%%%%%%%%%%%%%%%%%%%%%%%%%%%%%%% 

\begin{definition} {\rm \cite{Ostrik} }
  Let  $(\mathsf{F},\phi^{\mathsf{F}}) \colon \CM \,{\rr}\,
  \CN$ and $(\mathsf{G},\phi^{\mathsf{G}})\colon \CM \rr \CN$ be module functors.
  A \emph{module natural transformation} $ \eta\colon \mathsf{F} \,{\rr}\, \mathsf{G}$ is a natural
  transformation such that the diagram
  \begin{equation}
    \label{eq:module-nat-transf}
    \begin{tikzcd}
      \mathsf{F}(x \act m) \ar{rr}{\eta_{x \act m}} \ar{d}[left]{\phi^{\mathsf{F}}_{x,m}} & {} &
      \mathsf{G}(x \act m) \ar{d}{\phi^{\mathsf{G}}_{x,m}}
      \\
      x \act \mathsf{F}(m) \ar{rr}{\id_x \act \eta_{m}} && x \act \mathsf{G}(m)
    \end{tikzcd}
  \end{equation}
  commutes for all $ x \,{\in}\, \Cat{C}$ and all $m \,{\in}\, \Cat{M}$.
\end{definition}

\noindent
The composite of module natural transformations is again a module 
natural transformation. Hence for any pair of module categories $\CM$ and $\CN$, 
the module functors and module natural transformations from $\CM$ to $\CN$ form 
an essentially small category, denoted by $\Funl{\Cat{C}}{\CM,\CN}$.

%%%%%%%%%%%%%%%%%%%%%%%%%%%%%%%%%%%%%%%%%%%%%%%%%%%%%%%%%%%%%%%%%%%%%%%%%%%%%%%% 

\subsection{Bimodule categories}\label{sec:bimodule-categories}

The notions of left and right module categories can be combined 
in a rather obvious way to the following: 

\begin{definition}
  Let $\Cat{C}$ and $\Cat{D}$ be finite tensor categories.
  A $(\Cat{C},\Cat{D})$-\emph{bimodule category} $\CMD$ is  a
  left $\Cat{C}$- and right $\Cat{D}$-module category $\CMD$ together with 
  a family of natural isomorphisms $\gamma_{x,m,y}\colon (x \act m) \ract y \,{\rr}\, x \act
  (m \ract y)$ for all $x \,{\in}\, \Cat{C}$, $y \,{\in}\, \Cat{D}$ and $m \,{\in}\, \Cat{M}$
  such that the diagrams
  \begin{equation*} \hspace*{-.5em}
    \begin{tikzcd}[column sep=large] 
      ((x\,{\otimes}\,y)\act m) \ract d \ar{r}{\gamma_{x\otimes y,m,d}} \ar{d}[left]{\mu^{l}_{x,y,m} \ract d} 
      & (x \otimes y ) \act ( m \ract d)  \ar{dd}[left]{\mu^{l}_{x,y, m \ract d}}     
      \\
      (x \act ( y \act m)) \ract d  \ar{d}[left]{\gamma_{x, y \act m, d}} &
      \\
      x \act (( y \act m) \ract d) \ar{r}{1_{x} \act \gamma_{y,m,d}} & x \act ( y \act (m \ract d))
    \end{tikzcd}
    \hspace*{.8em}
    \begin{tikzcd}[column sep=large] 
      (x \act m) \ract ( d \otimes w) \ar{r}{\gamma_{x, m, d \otimes w}}  \ar{d}[left]{\mu^{r}_{x \act m, d, w}}
      & x \act ( m \ract ( d \otimes w))  \ar{dd}[left]{1_{x} \act \mu^{r}_{m,d,w}}  
      \\
      ( x \act m) \ract d ) \ract w \ar{d}[left]{ \gamma_{x,m,d} \ract 1_{w}}  &  
      \\
      (x \act (m \ract d)) \ract w  \ar{r}{\gamma_{x, m \ract d, w}} & x \act ( ( m \ract d) \ract w)
    \end{tikzcd}
  \end{equation*}
  and
  \begin{equation}
    \label{eq:13}
    \begin{tikzcd}
      (\unit_{\Cat{C}}\act m) \ract \unit_{\Cat{D}}  \ar{d}[left]{\lambda_{m}^{\Cat{M}} \ract \unit}
      \ar{rr}{\gamma_{\unit,m,\unit}}
      & {} & \unit_{\Cat{C}} \act (m \ract \unit_{\Cat{D}}) \ar{dd}{\unit \act \rho_{m}^{\Cat{M}}}
      \\
      m \ract \unit_{\Cat{D}} \ar{d}[left]{\rho_{m}^{\Cat{M}}} &&
      \\
      m && \unit_{\Cat{C}} \act m \ar{ll}{\lambda_{m}^{\Cat{M}}}
    \end{tikzcd}
  \end{equation}
  commute for all $x,y \,{\in}\, \Cat{C}$, $d,w \,{\in}\, \Cat{D}$ and $m \,{\in}\, \Cat{M}$.
  We call the natural isomorphism $\gamma$ the \emph{bimodule constraint} of the bimodule category $\CMD$.
\end{definition}

For functors between bimodule categories one requires a compatible left and right module functor 
structure, while for natural transformations no separate compatibility requirement is needed:

\begin{definition}
  \label{definition:equiv-bimodule-fun-des}
  {\rm (i)}\,
  A \emph{bimodule functor} $\mathsf{F} \colon \CMD \,{\rr}\, \CND$ is a left and a right module functor 
  with left module constraint $\phi^l$ and right module constraint $\phi^r$ such that the diagram
  \begin{equation}
    \label{eq:bimodule-functor-charact}
    \begin{tikzcd}
      {} & \mathsf{F}((x \act m) \ract y) \ar{dl}[left,yshift=5pt]{\mathsf{F}(\gamma_{x,m,y})}
      \ar{dr}[yshift=-3pt]{\phi^{r}_{x \act m, y} } & 
      \\
      \mathsf{F}(x \act (m \ract y)) \ar{d}[left]{\phi^l_{x, m \ract y}} & & \mathsf{F}(x \act m)
      \ract y \ar{d}{\phi^l_{x,m} \ract 1_y}
      \\
      x \act \mathsf{F}(m \ract y) \ar{dr}[left,yshift=-4pt]{1_x \act \phi^r_{m,y}} & & (x \act \mathsf{F}(m))
      \ract y \ar{dl}{\gamma_{x,\mathsf{F}(m),y}} 
      \\
      & x \act (\mathsf{F}(m) \ract y) & 
    \end{tikzcd}
  \end{equation}
  commutes for all objects $x \,{\in}\, \Cat{C}$, $y \,{\in}\, \Cat{D}$ and $m \,{\in}\, \Cat{M}$.
  \\[2pt]
  {\rm (ii)}\,
  A \emph{bimodule natural transformation} $\eta\colon \mathsf{F} \,{\rightarrow}\, \mathsf{G}$ 
  between bimodule functors $\mathsf{F}$ and $\mathsf{G}$ is 
  a natural transformation $\eta\colon \mathsf{F} \rightarrow \mathsf{G}$ that is both
  a left and a right module natural transformation. 
\end{definition}

These structures define for any pair of tensor categories $\Cat{C}$ and $\Cat{D}$ a 2-category 
with $(\Cat{D},\Cat{C})$-bimodule categories as objects, 
bimodule functors as 1-morphisms and bimodule natural transformations as 2-morphisms. 
The compositions are induced from the standard compositions of functors and
natural transformations in the 2-category of categories.
We denote this 2-category by $\BimCat(\Cat{D},\Cat{C})$. 

The most basic example of a module category is the category $\Cat{C}$ regarded as left 
(or right, or bi-) module category over itself, with action given by the tensor product. 
These \emph{regular} (bi-)module categories are denoted by $\LCC$ (respectively, by $\RCC$, $\CCC$).

Given a module category $\CM$ and a monoidal functor $\mathsf{F}\colon \Cat{D} \rightarrow \Cat{C}$, 
the category $\Cat{M}$ acquires a left $\Cat{D}$-module structure by setting 
$d \act m \,{:=}\, \mathsf{F}(d) \act m$,  
with constraints obtained from the constraints of $\Cat M$ and the monoidal structure on $F$.
We call this the \emph{pull back} of $\CM$ along $\mathsf{F}$, and denote it by  $\mathsf{F}^{*}(\CM)$. 
For composable monoidal functors $\mathsf{F}$ and $\mathsf{G}$ one has
$(\mathsf{F} \,{\circ}\, \mathsf{G})^{*}(\CM) \,{=}\, \mathsf{G}^{*} (\mathsf{F}^{*}(\CM))$.
When applied to the module category $\LCC$, this yields a module category structure over $\Cat{D}$ 
on $\Cat{C}$ for every such functor $\mathsf{F}$.

%%%%%%%%%%%%%%%%%%%%%%%%%%%%%%%%%%%%%%%%%%%%%%%%%%%%%%%%%%%%%%%%%%%%%%%%%%%%%%%% 

\subsection{The relative tensor product}\label{sec:relativetensorproduct} 

It is natural to categorify also the notion of tensor product of modules over an algebra. 
In order to achieve this, the notion of a
balanced functor turns out to be essential; this is defined as follows:

\begin{definition} 
  \label{definition:balanced-functors} 
  Let $\MC$ and $\CN$ be right and left module categories, respectively, over a
  finite tensor category $\Cat{C}$, and $\Cat{A}$ be a linear category.
  \\[2pt]
  {\rm (i)}\,
  A bilinear functor $\mathsf{F}\colon \MC \,{\times}\, \CN \rr \Cat{A}$ is called
  $\Cat{C}$-\emph{balanced}, with \emph{balancing constraint} $\beta^{\mathsf{F}}$,
  if it is equipped with a family of natural isomorphisms 
  \begin{equation}
    \beta^{\mathsf{F}}_{m,c,n}:\quad \mathsf{F}(m \ract c \times n) \rr \mathsf{F}(m \times c \act n),
  \end{equation}
  such that  the pentagon diagram 
  \begin{equation}
    \label{eq:balanced-functor}
    \begin{tikzcd} 
      {} & \mathsf{F}(m \ract (x \,{\otimes}\, y) \times n)
      \ar{dl}[left,yshift=4pt]{\mathsf{F}(\mu_{m,x,y}^{\Cat{M}}\times 1_{n})}
      \ar{dr}[yshift=-3pt]{\beta^{\mathsf{F}}_{m,x\,{\otimes}\, y,n}} &
      \\
      \mathsf{F}((m \ract x) \ract y \times n) \ar{d}[left]{\beta^{\mathsf{F}}_{ m \ract x, y, n}}
      & &  \mathsf{F}(m \times (x \,{\otimes}\, y ) \act n)
      \ar{d}{\mathsf{F}(1_{m} \times \mu_{x,y,n}^{\Cat{N}})}
      \\
      \mathsf{F}(m \ract x \times  y \act n) \ar{rr}{ \beta^{\mathsf{F}}_{m,x, y \act n }}
      & & \mathsf{F}( m \times x \act (y \act n))
    \end{tikzcd}
  \end{equation}
  and the triangle diagram
  \begin{equation}
    \label{eq:balanced-unit}
    \begin{tikzcd}
      \mathsf{F}(m \ract \unit_{\Cat{C}} \times n) \ar{r}{\beta^{\mathsf{F}}_{m,\unit,n}}
      \ar{d}[xshift=-48pt]{\mathsf{F}(\rho^{\Cat{M}}_{m} \times 1_{n})}
      & \mathsf{F}( m \times \unit_{\Cat{C}} \act n) \ar{dl}{\mathsf{F}(1_{m} \times\lambda^{\Cat{N}}_{n})} 
      \\
      \mathsf{F}(m \times n) & 
    \end{tikzcd}
  \end{equation}
  commute for all objects $x,y \,{\in}\, \Cat{C}$, $m \,{\in}\, \Cat{M}$ and $n \,{\in}\, \Cat{N}$.
  \\[1pt]
  We often denote the balancing constraint $\beta^{\mathsf{F}}$  simply by $\beta$ if this is unambiguous.
  \\[3pt]
  {\rm (ii)}\,
  Let $\mathsf{F},\mathsf{G} \colon \MC \times \CN \,{\rr}\, \Cat{A}$  be balanced functors. 
  A \emph{balanced natural transformation} $\eta\colon \mathsf{F} \,{\rr}\, \mathsf{G}$
  is a natural transformation $\eta\colon \mathsf{F} \,{\rr}\, \mathsf{G}$ such that the diagrams 
  \begin{equation}
    \label{eq:balanced-nat}
    \begin{tikzcd}
      \mathsf{F}(m \ract c \times n) \ar{rr}{\eta_{m \ract c \times n}} \ar{d}[left]{\beta^{\mathsf{F}}_{m,c,n}}
      & {} & \mathsf{G}(m \ract c \times n) \ar{d}{\beta^{\mathsf{G}}_{m,c,n}} 
      \\
      \mathsf{F}(m \times c \act n) \ar{rr}{\eta_{m \times c \act n}} && \mathsf{G}(m \times c \act n)
    \end{tikzcd}
  \end{equation}
  commute for all objects $c \,{\in}\, \Cat{C}$, $m \,{\in}\, \Cat{M}$ and $n \,{\in}\, \Cat{N}$.
\end{definition}

%%%%%%%%%%%%%%%%%%%%%%%%%%%%%%%%%%%%%%%%%%%%%%%%%%%%%%%%%%%%%%%%%%%%%%%%%%%%%%%% 

Let now $\Cat{C}$ be a finite tensor category and
$\MC$ and $\CN$ be left and right  $\Cat{C}$-module categories, respectively.
A relative tensor product $\MC \boxtensor{\Cat{C}} \CN$ of $\MC$ and $\CN$ is a $\Bbbk$-linear
abelian category that is defined -- up to equivalence of categories --
by a universal property that can be regarded as the analogue of the universal property of
the tensor product of modules over an algebra. However, the relative tensor product 
is only universal with respect to \emph{right exact} functors. Accordingly we introduce the

\begin{notation} 
  For linear categories $\Cat{M}$ and $\Cat{A}$ we denote by $\Funre(\Cat{M}, \Cat{A})$ and
  $\Funle(\Cat{M}, \Cat{A})$ the categories of right and left exact functors, respectively. 
  For $\Cat{N}$ another linear category $\Cat{N}$, $\Funre(\Cat{M} \,{\times}\, \Cat{N}, \Cat{A})$
  is the category of functors that are right exact separately in each argument. 
  \\
  In the case of module categories, $\,\Funbalre(\MC {\times} \CN, \Cat{A})$ denotes the category
  of right exact balanced functors, and the category of right exact module functors
  between module categories $\CN$ and $\CK$ is denoted by $\,\Funlre{\Cat{C}}{\CN, \CK}$. 
  Corresponding versions for left exact functors are denoted in an analogous manner.
\end{notation}

In our definition of the relative tensor product below
we require a fixed adjoint equivalence as part of the data of a relative tensor product;
it thus contains more structure than the relative tensor product of \cite[Def.\,3.3]{ENOfuhom}
or of \cite[Sect.\,2.7]{DaNi}.
Recall that an adjoint equivalence between two objects $b$ and $c$ in a bicategory $\Cat{B}$
consists of two $1$-mor\-phisms $F\colon b \,{\to}\, c$ and $G\colon c \,{\to}\, b$ and
invertible $2$-morphisms $\eta\colon F \,{\circ}\, G \,{\Rightarrow}\, 1_{c}$ and 
$\rho\colon G \,{\circ}\, F \,{\Rightarrow}\, 1_{b}$ that satisfy the  snake identities  in the
Hom-categories $\Cat{B}(b,c)$ and $\Cat{B}(c,b)$.

The assumption that we have adjoint equivalences
constitutes only a mild assumption about the properties of the equivalence, since every 
equivalence in a bicategory can be turned into an adjoint equivalence, albeit not in a 
canonical way, see e.g.\ \cite{Gurski}.  The additional datum is necessary because only then 
one obtains \cite{PivThree} an algebraic tricategory \cite{Gurski} of bimodule categories, 
for which adjoint equivalences in the higher coherence data are required. 

\begin{definition}
  \label{definition:tensor-prod} 
  Let $\MC$ and $\CN$ be left and right $\Cat{C}$-module categories, respectively. 
  A \emph{relative tensor product}
  $(\MC \boxtensor{\Cat{C}} \CN, \mathsf{B}_{\Cat{M},\Cat{N}}, \Psi_{\Cat{M},\Cat{N}}, 
  \varphi_{\Cat{M},\Cat{N}}, \kappa_{\Cat{M},\Cat{N}})$ of $\MC$ and 
  $\CN$ consists of the following data:
  \begin{itemize}
  \item
  a linear category $\MC \boxtensor{\Cat{C}} \CN$ together with a $\Cat{C}$-balanced functor
  $\mathsf{B}_{\Cat{M},\Cat{N}}\colon \MC \,{\times}\, \CN \rr \MC \boxtensor{\Cat{C}} \CN$,
  such that for all linear categories $\Cat{A}$ the induced functor 
  \begin{equation}
    \label{eq:univer-prop-Box}
    \begin{split}
      \Phi_{\Cat{M},\Cat{N};\Cat{A}}:\quad \Funre(\MC \boxtensor{\Cat{C}} \CN, \Cat{A})
      & \longrightarrow \Funbalre(\MC \times \CN, \Cat{A})
      \\[1pt]
      \mathsf{G} & \longmapsto \mathsf{G} \circ \mathsf{B}_{\Cat{M},\Cat{N}} 
    \end{split}
  \end{equation}
  is an equivalence of categories.
 \item 
 For each linear category $\Cat{A}$ we have specified a quasi-inverse
  \begin{equation}
    \label{eq:Psi}
    \Psi_{\Cat{M},\Cat{N};\Cat{A}}\colon\quad
    \Funbalre(\MC {\times} \CN, \Cat{A}) \rightarrow \Funre(\MC \boxtensor{\Cat{C}} \CN, \Cat{A})  
  \end{equation}
 and an adjoint equivalence 
  \begin{equation}
  \varphi_{\Cat{M},\Cat{N};\Cat{A}}\colon\quad
  \id \rightarrow \Phi_{\Cat{M},\Cat{N};\Cat{A}} \, \Psi_{\Cat{M},\Cat{N};\Cat{A}} \quad\mbox{ and }\quad
  \kappa_{\Cat{M},\Cat{N};\Cat{A}}\colon\quad 
  \Psi_{\Cat{M},\Cat{N};\Cat{A}} \, \Phi_{\Cat{M},\Cat{N};\Cat{A}} \rightarrow \id
  \end{equation}
 between the functors $\Phi_{\Cat{M},\Cat{N};\Cat{A}}$ and $\Psi_{\Cat{M},\Cat{N};\Cat{A}}$.
  \end{itemize}
\end{definition}

It follows that a relative tensor product of (finite) module categories $\MC$ and $\CN$ 
is unique up to unique adjoint equivalence, if it exists. 
Its existence has been fully established only recently:

\begin{proposition}{\rm \cite{DSSbal}} 
  Let $\MC$ and $\CN$ be module categories over a finite tensor category $\Cat{C}$. Then the
  relative tensor product $\MC \boxtensor{\Cat{C}} \CN$ over $\Cat{C}$ exists and is unique 
  up to unique adjoint equivalence. 
\end{proposition}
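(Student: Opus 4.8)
The plan is to reduce the statement to the construction in \cite{DSSbal}, but since that reference produces a relative tensor product only in the weaker sense of \cite[Def.\,3.3]{ENOfuhom} (i.e.\ without the specified quasi-inverse and adjoint equivalences), the real work is to upgrade such a relative tensor product to one in the sense of Definition \ref{definition:tensor-prod}. First I would invoke \cite{DSSbal} to obtain a linear category $\MC \boxtensor{\Cat{C}} \CN$ together with a $\Cat{C}$-balanced functor $\mathsf{B}_{\Cat{M},\Cat{N}}\colon \MC \,{\times}\, \CN \rr \MC \boxtensor{\Cat{C}} \CN$ such that, for every linear category $\Cat{A}$, the functor $\Phi_{\Cat{M},\Cat{N};\Cat{A}}$ of \eqref{eq:univer-prop-Box} is an equivalence. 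Abstract nonsense about equivalences in a $2$-category then guarantees that a quasi-inverse exists, but not a \emph{coherent, functorial-in-$\Cat{A}$} choice; this is precisely the extra datum we must supply.

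The key step is therefore: for each $\Cat{A}$ choose a quasi-inverse $\Psi_{\Cat{M},\Cat{N};\Cat{A}}$ of $\Phi_{\Cat{M},\Cat{N};\Cat{A}}$ and natural isomorphisms $\varphi_{\Cat{M},\Cat{N};\Cat{A}}\colon \id \rr \Phi\Psi$ and $\kappa_{\Cat{M},\Cat{N};\Cat{A}}\colon \Psi\Phi \rr \id$, and then improve the pair $(\varphi,\kappa)$ to an \emph{adjoint} equivalence. The latter is the standard ``2-out-of-3'' fix for equivalences in a bicategory (see \cite{Gurski}): given any equivalence $(\Phi,\Psi,\varphi,\kappa)$, one may replace $\kappa$ by $\kappa' := (\Psi \varphi^{-1} \Phi) \circ (\Psi \Phi \kappa) \circ \kappa$ -- or symmetrically adjust $\varphi$ -- so that the two triangle (snake) identities hold; this leaves $\Phi$ and $\Psi$ untouched and only modifies one of the $2$-morphisms. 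Carrying this out for every $\Cat{A}$ yields the data $(\MC \boxtensor{\Cat{C}} \CN, \mathsf{B}_{\Cat{M},\Cat{N}}, \Psi_{\Cat{M},\Cat{N}}, \varphi_{\Cat{M},\Cat{N}}, \kappa_{\Cat{M},\Cat{N}})$ required by Definition \ref{definition:tensor-prod}. Note that no compatibility \emph{across} different $\Cat{A}$ is demanded by the definition, so the choices may be made independently; this is what makes the argument go through without further coherence bookkeeping at this stage.

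For uniqueness up to unique adjoint equivalence, I would argue as follows. Suppose $(\Cat{T}, \mathsf{B}, \dots)$ and $(\Cat{T}', \mathsf{B}', \dots)$ are two relative tensor products. Applying the universal property of $\Cat{T}$ to the right exact balanced functor $\mathsf{B}'$ produces a right exact functor $\Cat{T} \rr \Cat{T}'$, essentially unique with the property that precomposition with $\mathsf{B}$ recovers $\mathsf{B}'$ up to coherent balanced isomorphism; symmetrically one gets $\Cat{T}' \rr \Cat{T}$. The two composites, when restricted along $\mathsf{B}$ respectively $\mathsf{B}'$, agree with the identity up to canonical balanced isomorphism, so by the faithfulness part of the universal property (equivalences are fully faithful) they are canonically isomorphic to the identity functors. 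The specified quasi-inverses and adjoint equivalences on both sides then pin down these comparison functors and isomorphisms uniquely, giving a \emph{unique} adjoint equivalence $\Cat{T} \simeq \Cat{T}'$; this is the content of the remark following Definition \ref{definition:tensor-prod}, which I would simply cite.

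The main obstacle is the coherence upgrade in the middle paragraph: one must be careful that the ``2-out-of-3'' replacement of $\kappa$ by $\kappa'$ genuinely produces an adjoint equivalence and does not secretly require further adjustment, and that the construction is compatible with the equivalence $\Phi_{\Cat{M},\Cat{N};\Cat{A}}$ already being given (rather than $\Psi$). This is routine but must be stated carefully, since it is exactly the point where the present, stronger notion of relative tensor product diverges from the one in the literature. Everything else -- existence of the underlying category and balanced functor -- is black-boxed from \cite{DSSbal}, and uniqueness is formal.
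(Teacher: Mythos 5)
Your proposal matches what the paper does: the proposition is quoted from \cite{DSSbal} without an independent proof, and the paper's only added content is exactly your middle step, namely the remark (made for realization (i) in Section \ref{sec:constr-tens-prod} and after Definition \ref{definition:tensor-prod}) that any equivalence can be promoted to an adjoint equivalence, so the weaker universal property of \cite{DSSbal} upgrades to Definition \ref{definition:tensor-prod}, with uniqueness being formal from the universal property. So this is essentially the same approach, with your uniqueness paragraph spelling out what the paper leaves as an assertion.
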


Moreover, for any pair of bimodule categories $\DMC$ and $\END$, the relative tensor product 
$\END \boxtensor{\Cat{D}} \DMC$, is canonically a bimodule category. With the relative tensor
product as composition of 1-morphisms one obtains a tricategory $\BimCat$ with objects
finite tensor categories and bimodule categories, bimodule functors and bimodule
natural transformations as 1-, 2- and 3-morphisms, respectively, see \cite{PivThree}.
This tricategory is a categorification of the bicategory $\Algbim$ of algebras, bimodules 
and bimodule morphisms.

\medskip

We will discuss several models for the relative tensor product in Section 
\ref{sec:constr-tens-prod}. Also, the relative tensor product of $\Bbbk$-linear categories 
over $\Vect$ is the familiar Deligne product \cite{deli}:

\begin{proposition}
  \label{proposition:linear-bimodule-cats}
  Let $\Cat M$ and $\Cat N$ be finite linear categories. 
  Then the  categories $\Cat{M}$ and $\Cat{N}$ are canonically $\Vect$-bimodule categories $($they are exact 
  as bimodule categories iff $\Cat{M}$ and $\Cat{N}$ are semisimple as abelian categories$)$. Conversely,
  any $\Vect$-bimodule  category $\Cat{M}$ is canonically equivalent to $\Cat{M}$ equipped with
  this standard bimodule structure. Further, the relative tensor product over $\Vect$
  exists and coincides with the Deligne product $\Cat{M} \boxtimes \Cat{N}$ of $\Cat{M}$ and $\Cat{N}$.
\end{proposition}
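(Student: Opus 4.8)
The plan is to verify the three assertions in turn, treating the first and last essentially by unwinding definitions and invoking the universal property of the Deligne product, and spending the real effort on the middle (rigidification) statement.

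First I would establish the standard bimodule structure. Since $\Vect$ is the unit object in $\BimCat$, any finite linear category $\Cat{M}$ carries the action $V \act m := V \otimes_\Bbbk m$, i.e.\ the copower: for $V \,{\in}\, \Vect$ with basis $(e_i)_{i}$, $V \act m := \bigoplus_i m$, functorially extended. Because $\Cat{M}$ is $\Bbbk$-linear, finite and abelian, this copower exists, is bilinear, and is exact in each argument (the copower by a finite-dimensional space is a finite biproduct). The module constraints $\mu$ and $\lambda$ come from the canonical isomorphisms $(V \otimes W) \act m \cong V \act (W \act m)$ and $\Bbbk \act m \cong m$; the pentagon and triangle follow from the associativity and unit coherence of $\otimes_\Bbbk$ in $\Vect$. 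The same copower gives the right action, and since $\Vect$ is symmetric the bimodule constraint $\gamma_{V,m,W}\colon (V \act m)\ract W \,{\rr}\, V \act(m \ract W)$ is the obvious reordering isomorphism; all its coherence diagrams reduce to coherence in $\Vect$. The exactness claim in parentheses is standard: a $\Vect$-module category is exact iff projective copowers of every object by $\Bbbk \,{=}\,$ (the unique simple projective of $\Vect$) are projective, which is automatic, so the condition "exact as a bimodule category" via \cite[Example~3.3]{FinTen} is equivalent to $\Cat{M}$ being semisimple.

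The main work is the converse: any $\Vect$-bimodule structure $(\act,\mu,\lambda)$ on $\Cat{M}$ is equivalent to the standard one. The hard part will be showing that an arbitrary exact bilinear action of $\Vect$ recovers the copower. The strategy is: for $V \,{\in}\, \Vect$ choose an isomorphism $V \cong \Bbbk^{\oplus n}$; bilinearity of $\act$ gives $V \act m \cong \bigoplus_{i=1}^n (\Bbbk \act m)$, and $\lambda$ identifies $\Bbbk \act m \cong m$. One then checks, using naturality of $\mu$ and $\lambda$ and the pentagon/triangle axioms, that the resulting comparison is natural in both $V$ and $m$ and is compatible with $\mu,\lambda$ — i.e.\ defines a module equivalence to $(\Cat{M},\act^{\std})$ — and that it is a bimodule equivalence once one checks compatibility with $\gamma$. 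The only subtlety is coherence bookkeeping: the comparison depends on the chosen basis isomorphisms, but the module-functor axioms force any two choices to be related by a module natural isomorphism, so the equivalence is canonical up to canonical isomorphism. I would phrase this as: the forgetful $2$-functor from $\Vect$-module categories to linear categories is an equivalence onto its image, with quasi-inverse $\Cat{M} \mapsto (\Cat{M},\act^{\std})$.

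Finally, for the relative tensor product: by the previous proposition $\Cat{M}\boxtensor{\Vect}\Cat{N}$ exists, and it is characterized by the universal property that right exact functors out of it correspond to right exact $\Vect$-balanced functors out of $\Cat{M}\times\Cat{N}$. For the standard bimodule structures a $\Vect$-balancing constraint on a bilinear functor $\Cat{M}\times\Cat{N}\,{\rr}\,\Cat{A}$ is, by the basis argument above, no extra data — it is uniquely determined and automatically present — so right exact $\Vect$-balanced functors out of $\Cat{M}\times\Cat{N}$ are the same as right exact bilinear functors out of $\Cat{M}\times\Cat{N}$. But that is exactly the universal property defining the Deligne product $\Cat{M}\boxtimes\Cat{N}$ \cite{deli}. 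Hence $\Cat{M}\boxtensor{\Vect}\Cat{N}$ and $\Cat{M}\boxtimes\Cat{N}$ represent the same $2$-functor on linear categories and are therefore equivalent; one then transports the specified quasi-inverse and adjoint-equivalence data of Definition \ref{definition:tensor-prod} through this equivalence. I expect the balancing-is-no-data observation to be the crux, and the rest to be a routine comparison of universal properties.
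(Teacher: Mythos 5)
The paper itself gives no proof of Proposition \ref{proposition:linear-bimodule-cats}; it is stated as a standard fact (with the Deligne product taken from \cite{deli}), so there is no in-paper argument to compare yours against. Your proof is the natural one and is essentially correct. The crux is exactly where you locate it: for the standard $\Vect$-actions a balancing constraint is property rather than structure --- the triangle axiom forces $\beta_{m,\Bbbk,n}$, and naturality in the $\Vect$-variable together with linearity of the action and of the functor then determines (and supplies) $\beta_{m,V,n}$ for every $V$ and makes every natural transformation automatically balanced --- so $\Funbalre(\Cat{M}\times\Cat{N},\Cat{A})\simeq\Funre(\Cat{M}\times\Cat{N},\Cat{A})$ and the universal property of $\boxtimes_{\Vect}$ collapses to Deligne's universal property of $\Cat{M}\boxtimes\Cat{N}$. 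This is the same mechanism the paper uses later for the trace over $\Vect$ (the Lemma in Section \ref{sec:circ-tens-prod-3-trace}), implemented there slightly differently: the given balancing is compared to the canonical one via an explicit balanced natural isomorphism rather than by a uniqueness argument. Your rigidification step (any $\Vect$-action is equivalent to the copower action) is fine as sketched; note only that naturality of the comparison $V\act m\cong V\otimes_{\Bbbk}m$ must be checked against arbitrary linear maps $V\to W$, which is precisely where the basis-independence you invoke does its work, and that it also legitimizes replacing the given module structures by the standard ones before the balancing argument.

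One sentence needs repair: your parenthetical on exactness reads as if the projectivity condition were automatic, which would make every $\Vect$-module category exact. The quick correct argument is that $\Bbbk$ is projective in $\Vect$ and $\Bbbk\act m\cong m$, so exactness forces every object of $\Cat{M}$ to be projective, i.e.\ $\Cat{M}$ semisimple, and conversely; equivalently, just cite \cite[Example 3.3]{FinTen} as you do, since $\Vect$ is semisimple.
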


The Deligne product allows for the following characterization of bimodule categories. For a 
monoidal category $\Cat{C}$ we denote by $\Catc{C}$ the monoidal category with the opposite tensor product 
$\cc{\otimes}$, i.e.\ $c \,{\cc{\otimes}}\, d \,{=}\, d \,{\otimes}\, c$ for objects $c,d \,{\in}\, \Cat{C}$;
associativity and unit constraints are canonically obtained from the corresponding constraints of $\Cat{C}$.
It follows that a $(\Cat{C},\Cat{D})$-bi\-mo\-dule category is the same as a left $\Cat{C} \boxtimes
\Catc{D}$-module category \cite[proof\,of\,Prop.\,1.3.10]{GreenPhd}.
Further, a  bimodule functor $\mathsf{F}\colon \CMD \,{\rightarrow}\, \CND$ and 
a bimodule natural transformation between two such functors are the same as 
module functors and module natural transformations, respectively, between the 
corresponding $\Cat{C} \boxtimes \Catc{D}$-module categories.

%%%%%%%%%%%%%%%%%%%%%%%%%%%%%%%%%%%%%%%%%%%%%%%%%%%%%%%%%%%%%%%%%%%%%%%%%%%%%%%% 

\subsection{Constructions of module categories}\label{sec:constr-module-categ}

In the sequel we provide explicit constructions of the structures that we introduced above.
First we consider constructions of module categories from given ones.
They enter in the explicit constructions of the relative Deligne product
that we will present in the beginning of subsection \ref{sec:constr-tens-prod},
and of the \ct\ in Theorem \ref{thm:models-circ-tensor}.

\smallskip

{\bf Duals of bimodule categories}:
Let $\DMC$ be a $(\Cat{D},\Cat{C})$-bimodule category over rigid monoidal categories 
$\Cat{C}$ and $\Cat{D}$. We can then use the two dualities of $\Cat{C}$ and $\Cat{D}$ to
define two $(\Cat{C},\Cat{D})$-bimodule categories $\CMDrd$ and $\CMDld$,
to be called the \emph{right} and \emph{left dual} of $\DMC$, respectively,      
as follows. As categories, they are both $\op{M}$, while the actions are given by
  \begin{equation}
  \label{eq:rd-action}
  c\, \actrd m\, \ractrd \!d := \leftidx{^\vee}\!{d}{} \,\act\, m \,\ract \leftidx{^\vee\!}{c}{}
  \end{equation}
for the case of $m \,{\in}\, \CMDrd$, and 
  \begin{equation}
  \label{eq:ld-action}
  c\, \actld \,m\, \ractld \,d := d^{\vee} \act \,m\, \ract \,c^{\vee}
  \end{equation}
for $m \,{\in}\, \CMDld$, respectively. This agrees with \cite[Def.\,3.4.4]{DSS}.

A basic property of the dual categories is the following \cite[Lemma\,4.1]{PivThree}:
If $\DMC$ and $\CNE$ are bimodule categories over rigid monoidal categories, then 
there are canonical equivalences 
  \begin{equation}
  \leftidx{^{\scriptscriptstyle{\#\!}}}{(\CMDrd)}{} \simeq \DMC \,, \quad\
  (\CMDld)^{{\scriptscriptstyle{\#}}}\simeq \DMC 
  \quad {\rm and} \quad
  \leftidx{^{\scriptscriptstyle{\#\!}}}{(\DMC \boxtensor{\Cat{C}} \CNE)}{}
  \simeq  \ENCld \boxtensor{\Cat{C}} \CMDld
  \end{equation}
of bimodule categories.

\smallskip

{\bf Functor categories as module categories}:
Next we describe various ways of obtaining the structure of
a module category on functor categories of (bi-)module categories:

\begin{itemize} 

\item 
  Let $\CM$ be a left module category and $\Cat{A}$ an arbitrary finite linear category. Then 
  the functor category $\Fun(\CM, \Cat{A})$ has a natural structure of
  a right $\Cat{C}$-module category with action
  \begin{equation}
    \label{eq:action-right-fun}
    (  \mathsf{F}\ract c)(m) := \mathsf{F}(c \act m)
  \end{equation}
  for $\mathsf{F} \,{\in}\, \Fun(\CM, \Cat{A})$, $m \,{\in}\, \Cat{M}$ and $c \,{\in}\, \Cat{C}$. To 
  see that the action constitutes an exact functor, note that
  the functor $ c \act - \colon \Cat{M} \,{\rightarrow}\, \Cat{M}$ is
  exact, which implies, according to the adjoint functor theorem (see  \cite[Prop.\,1.7]{DSSbal}), 
    % for origin see \cite[Thm.\,V.6.2]{MacLane} 
  that the left and right adjoint functors exist. For instance, the right adjoint 
  $(c \act -)^{*}$ provides for $\mathsf{F},\mathsf{G} \,{\in}\, \Fun(\CM, \Cat{A})$ an adjunction 
  \begin{equation}
    \label{eq:adj-hence-exact}
    \Fun(\mathsf{F} \,{\circ}\, (c \act -), \mathsf{G})
    \simeq \Fun(\mathsf{F}, \mathsf{G} \,{\circ}\, (c \act -)^{*}) \,.
  \end{equation}
  It follows that the precomposition with $(c \act -)$ is an exact endofunctor of  $\Fun(\CM, \Cat{A})$.
  The module constraint for $\Fun(\CM, \Cat{A})$ is obtained directly from the module constraint of 
  $\CM$, which induces a natural isomorphism between the functors $(\mathsf{F} \ract c) \ract c'$ and
  $\mathsf{F} \ract (c \otimes c')$ for any $c, c' \,{\in}\, \Cat{C}$.
\item  
  Let $\CM$ be a left module category. Then the functor category $\Fun(\Cat{A}, \!\CM)$
  has a natural structure of a left $\Cat{C}$-module category with action
  $(c \act \mathsf{G})(a) \,{:=}\, c \act \mathsf{G}(a)$.
  The exactness of the action and the module constraint are obtained in the same way as in the 
  preceding example.
\item   
  Similarly, for a right $\Cat{C}$-module category $\MC$, the category $\Fun(\MC, \Cat{A})$ is a 
  left $\Cat{C}$-module category and the category $\Fun(\Cat{A}, \!\MC)$ is a right 
  $\Cat{C}$-module category. 
\item 
  These constructions are compatible with possible bimodule structures on the categories, i.e.
  $\Funl{\Cat{C}}{\CMD, \CNE}$ is a  $ (\Cat{D}, \Cat{E})$-bimodule category and 
  $\Funl{\Cat{C}}{\DMC, \ENC}$ is a $(\Cat{E},\Cat{D})$-bimodule category.
  Additionally, the module actions restrict to actions on the subcategories
  of left and right exact functors in the examples above.
  % The so obtained bimodule categories are in general not exact; to avoid problems with
  % dualities, later on we will restrict to \emph{separable} bimodule categories, 
  % see Definition \ref{definition:separable}.
  
\item 
  The dual categories introduced above are in fact just special cases of the latter 
  constructions: For
   % XXX  
             a finite bimodule category $\DMC$ over finite tensor categories,
  there are canonical equivalences
  \begin{equation}
    \label{eq:duals-as-fun}
    \CMDld \simeq \Funlre{\Cat{D}}{\DMC, \DDD} \quad \text{ and }\quad
    \CMDrd \simeq \Funlre{\Cat{C}}{\DMC, \CCC} 
  \end{equation}
  of bimodule categories, see \cite[Prop.\,3.4.9]{DSS}. 
  The equivalences from the functor categories to the dual bimodule categories are obtained 
  by evaluating  the right adjoint of a functor on the 
  respective unit object.
\end{itemize}

\smallskip

{\bf Module categories from algebra objects}:
Finally \cite{Ostrik}, given an algebra object $A \,{\in}\, \Cat{C}$, the category 
$\Mod_{A}(\Cat{C})$ of $A$-right modules is naturally a left $\Cat{C}$-module category.
Analogously, the category $_{A\!}\Mod(\Cat{C})$ of $A$-left modules is a right $\Cat{C}$-module 
category. Conversely, every 
$\Cat{C}$-module category can be obtained this way, see \cite[Thm.\,3.17]{FinTen} and
    % XXX  genauer zitiert:     
              \cite[Thm.\,2.18]{DSSbal}: 
For every 
   % XXX  
      (finite linear) 
module category $\CM$ over a finite 
tensor ca\-tegory there exists an algebra $A \,{\in}\, \Cat{C}$ together with an equivalence
  \begin{equation}
  \label{equation:Theorem-Ostrik}
    \CM \,{\simeq}\, \Mod_{A}(\Cat{C})
  \end{equation}
of module categories.

%%%%%%%%%%%%%%%%%%%%%%%%%%%%%%%%%%%%%%%%%%%%%%%%%%%%%%%%%%%%%%%%%%%%%%%%%%%%%%%% 

\subsection{Constructions of the relative tensor product}\label{sec:constr-tens-prod}

The relative tensor product $\MC \boxtensor{\Cat{C}} \CN$ of module categories $\MC$ 
and $\CN$  has been introduced in Definition \ref{definition:tensor-prod}
by a universal property. We now present several explicit realizations of 
the relative tensor product, thereby establishing at the
same time its existence. Proposition \ref{proposition:twisted-center-tensor}
gives a new realization in terms of a twisted center.

\begin{enumerate}
\item
First we consider categories of modules internal to $\Cat{C}$. Let us choose algebra objects 
$A,B \,{\in}\, \Cat{C}$ such that $\MC \simeq {}_{A\!}\Mod(\Cat{C})$ and $\CN \simeq \Mod_{B}(\Cat{C})$. 
Then the category ${}_{A\!}\Mod_{B}(\Cat{C})$ of $(A,B)$-bimodules in $\Cat{C}$ can be 
endowed with the structure of a relative tensor product of $\MC$ and $\CN$, as shown in \cite{DSSbal}. 
The universal balanced functor 
  \begin{equation}
    \mathsf B\colon\quad {}_{A\!}\Mod \boxtimes \Mod_{B} \rightarrow {}_{A\!}\Mod_{B}
  \end{equation}
is given by $\mathsf B(m \boxtimes n) \,{=}\, m \,{\otimes}\, n$, with the obvious
$(A,B)$-bimodule structure on the object $m \,{\otimes}\, n\in\Cat{C}$ induced from
the left $A$-module structure on $m$ and the right $B$-module structure on $n$.  
It is clear that the functor $\mathsf B$ is $\Cat{C}$-balanced. 

We refer to \cite[Thm.\,3.3]{DSSbal} 
  %% the structural data of the categories are covered there,
  %% except that they don't require adjoint equivalences in the definition of tensor product. 
for the proof that this construction provides a model for the relative tensor product. Note that 
in \cite{DSSbal} adjoint equivalences are not required in the definition of a relative tensor 
product. Since an adjoint equivalence always exists, we can without loss of generality assume
that the equivalence is endowed with the structure of an adjoint equivalence, 
whereby we obtain a structure of a relative tensor 
product in the sense of Definition \ref{definition:tensor-prod}.

%%%%%%%%%%%%%%%%%%%%%%%%%%%%%%%%%%%%%%%%%%%%%%%%%%%%%%%%%%%%%%%%%%%%%%%%%%%%%%%% 

\item
Another possibility to construct a relative tensor product is to use functor categories.
Consider 
   % XXX  
         finite bimodule categories $\DMC$ and $\CNE$ over finite tensor categories. 
Then as shown in \cite[Cor.\,3.4.11]{DSS}, the functor categories
  \begin{equation}
  \label{eq:tensor-as-fun}
  \begin{array}{l}
  \DMC \boxtensor{\Cat{C}} \CNE \simeq \Funlre{\Cat{C}}{\CMDrd,\CNE} \qquad\mbox{and}
  \\{}\\[-.6em]
  \DMC \boxtensor{\Cat{C}} \CNE \simeq \Funlre{\Cat{C}}{\ENCld, \DMC} \,, 
  \end{array}
  \end{equation}
can be endowed with the structure of a relative tensor product,
with the universal balanced functors as given in \cite[Prop.\,3.5]{ENOfuhom}.

Under the equivalence $\Cat{M} \boxtimes \Cat{N} \,{\simeq}\, \Funre(\op{M},\Cat{N})$
of linear categories, the universal balancing functor 
$\mathsf{B}\colon \DMC \boxtimes \CNE \,{\to}\, \Funlre{\Cat{C}}{\CMDrd,\CNE}$ is given 
by the left adjoint of the forgetful functor 
$ \Funlre{\Cat{C}}{\CMDrd,\CNE} \,{\to}\, \Funre(\op{M},\Cat{N})$ \cite{ENOfuhom}.
For the structure of an adjoint equivalence, the same remarks as for realization $i)$ apply.

\item
In case the tensor category $\Cat{C}$ is semisimple, yet another description of the 
relative tensor product uses the object
\begin{equation}
  \label{coend-alg}
  A := \bigoplus_{u}\, c_{u} \boxtimes c_{u}^{\vee} \,\in \Cat{C} \boxtimes \Catc{C} \,,
\end{equation}
where the summation is over the isomorphism classes of simple objects of $\Cat{C}$.
This object has a natural structure of a Frobenius algebra \cite[Lemma\,6.19]{FFRS}.
As first asserted in \cite[Rem.\,3.9]{ENOfuhom}, the category 
$\MC \boxtensor{\Cat{C}} \CN \simeq \Mod_{A}(\MC \boxtimes \CN)$ (with objects being 
$A$-mo\-dules in the category $\MC \boxtimes \CN$, a notion to be described in detail 
in Definition \ref{def:AmoduleinM} below) has the structure of a relative tensor product 
of $\MC$ and $\CN$ \cite[Prop.\,3.2.9]{Schaum}. In this case the universal balanced functor
is the induction functor, and the forgetful functor is its right adjoint.

%%%%%%%%%%%%%%%%%%%%%%%%%%%%%%%%%%%%%%%%%%%%%%%%%%%%%%%%%%%%%%%%%%%%%%%%%%%%%%%% 
\end{enumerate}

We now proceed to the first result of this paper,
Proposition \ref{proposition:twisted-center-tensor}: We provide an explicit construction 
of the relative tensor product that correctly deals with the double dual functor, a 
subtlety that has been omitted in the literature. This
construction is based on what we call the \emph{twisted center} of two module categories.
We first allow for general twists by monoidal functors $\mathsf{F}$. The specific functor 
relevant for the relative tensor product is the double dual functor, see below. 

\begin{definition}
\label{definition:twisted-center}
Let  $\Cat{C}$ be a rigid monoidal category,
$\mathsf{F}  \colon \Cat{C} \,{\rightarrow}\, \Cat{C}$ a monoidal 
functor and $\CMC$  a $(\Cat{C},\Cat{C})$-bimodule category. 
Denote by $\CMC^{\!\! \mathsf{F}}$ the $(\Cat{C},\Cat{C})$-bimodule category with the
same left action and with the right action pulled back by $\mathsf{F}$. The 
\emph{$\mathsf{F}$-twisted center} is then the center of the bimodule category
$\CMC^{\!\! \mathsf{F}}$.
\end{definition}

Objects of $\,\centCF(\CMC)$ are thus pairs $(X, \gamma)$
consisting of an object $X \,{\in}\, \CMC$ and natural isomorphisms
  \begin{equation}
    \gamma_{c}\colon\quad c \act X\stackrel{\cong}{\longrightarrow}X \ract \mathsf{F}(c)
  \end{equation}  
for all $c \,{\in}\, \Cat{C}$ that satisfy the usual pentagon relation with respect to the
mixed tensor product. The morphisms from $(X,\gamma^{X})$ to $(Y, \gamma^{Y})$ are morphisms
$f\colon X \,{\rightarrow}\, Y$ in $\CMC$ that satisfy
$\gamma^{Y}_{c}( c \act f) \,{=}\,(f \ract \mathsf{F}(c) ) \,{\circ}\, \gamma^{X}_{c}$
for all $c \,{\in}\, \Cat{C}$. In particular,
the category $\mathcal{Z}_{\Cat C}^{\mathsf{Id}}(\Cat C)$ is the ordinary Drinfeld center of $\Cat C$.

The following alternative description is useful:

\begin{lemma}
  \label{lemma:twisted-center-equiv}
  Let $\mathsf{F}$ be a monoidal functor as in Definition $\ref{definition:twisted-center}$.
  Then the $\mathsf{F}$-twisted center is equivalent as a $\Bbbk$-linear abelian category
  to the category of bimodule functors from the pullback $\mathsf{F}^{*}(\CCC)$ of $\CCC$,
  with respect to the left module structure, to $\CMC$, i.e. 
  \begin{equation}
    \label{eq:twisted-center-asbimods-gen}
    \centCF(\CMC ) \simeq \Funl{\Cat{C},\Cat{C}}{ \mathsf{F}^{*}(\CCC), \CMC } \,.
  \end{equation}
\end{lemma}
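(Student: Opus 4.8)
The plan is to prove this by the twisted analogue of the standard fact that the center of a $(\Cat{C},\Cat{C})$-bimodule category is the category of bimodule functors out of the regular bimodule, namely by setting up an equivalence ``evaluation at the tensor unit'' in one direction and ``act on $X$ by the right module structure'' in the other, and then checking the two assignments are mutually quasi-inverse. First I would note that $\mathsf{F}^{*}(\CCC)$ is again a $(\Cat{C},\Cat{C})$-bimodule category (the bimodule constraint being the unchanged associator) and define a $\Bbbk$-linear functor $\Theta\colon\centCF(\CMC)\to\Funl{\Cat{C},\Cat{C}}{\mathsf{F}^{*}(\CCC),\CMC}$ as follows. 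To an object $(X,\gamma)$, with $\gamma_{c}\colon c\act X\to X\ract\mathsf{F}(c)$, I assign the functor $\mathsf{G}_{(X,\gamma)}$ sending $c\in\mathsf{F}^{*}(\CCC)$ to $X\ract c$, with right module constraint $\phi^{r}_{c,d}:=\mu^{r}_{X,c,d}$ and left module constraint
\begin{equation*}
\phi^{l}_{c,c'}\colon\ X\ract(\mathsf{F}(c)\otimes c')\xrightarrow{\ \mu^{r}\ }(X\ract\mathsf{F}(c))\ract c'\xrightarrow{\ \gamma_{c}^{-1}\ract\id\ }(c\act X)\ract c'\xrightarrow{\ \gamma^{\Cat{M}}_{c,X,c'}\ }c\act(X\ract c'),
\end{equation*}
where $\gamma^{\Cat{M}}$ is the bimodule constraint of $\CMC$; on morphisms $\Theta$ sends $f\colon(X,\gamma^{X})\to(Y,\gamma^{Y})$ to the transformation with components $f\ract c$. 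I would then verify that $\mathsf{G}_{(X,\gamma)}$ is genuinely a bimodule functor, that $\Theta(f)$ is a bimodule natural transformation (right module naturality being trivial, left module naturality following from the defining relation of morphisms in the twisted center together with naturality of $\gamma^{\Cat{M}}$), and that $\Theta$ is functorial and $\Bbbk$-linear.

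In the opposite direction I would define $\Xi$ by evaluation at $\unit$: a bimodule functor $(\mathsf{G},\phi^{l},\phi^{r})$ is sent to $(\mathsf{G}(\unit),\gamma)$ with
\begin{equation*}
\gamma_{c}\colon\ c\act\mathsf{G}(\unit)\xrightarrow{\ (\phi^{l}_{c,\unit})^{-1}\ }\mathsf{G}(\mathsf{F}(c)\otimes\unit)\xrightarrow{\ \cong\ }\mathsf{G}(\mathsf{F}(c))\xrightarrow{\ \cong\ }\mathsf{G}(\unit\otimes\mathsf{F}(c))\xrightarrow{\ \phi^{r}_{\unit,\mathsf{F}(c)}\ }\mathsf{G}(\unit)\ract\mathsf{F}(c),
\end{equation*}
the middle maps being $\mathsf{G}$ applied to the unitors of $\Cat{C}$, and a bimodule natural transformation $\eta$ to $\eta_{\unit}$. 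I would check that $(\mathsf{G}(\unit),\gamma)$ satisfies the pentagon relation of the $\mathsf{F}$-twisted center and that $\Xi\Theta$ and $\Theta\Xi$ are naturally isomorphic to the identity: for $\Xi\Theta$, the unitor $\rho^{\Cat{M}}_{X}\colon X\ract\unit\to X$ should intertwine the two half-braidings, using the triangle \eqref{eq:13}; for $\Theta\Xi$, the isomorphism $\mathsf{G}(c)\to\mathsf{G}(\unit)\ract c$ obtained by composing $\mathsf{G}$ of the unitor with $\phi^{r}_{\unit,c}$ should be a bimodule natural isomorphism $\mathsf{G}\Rightarrow\Theta\Xi(\mathsf{G})$, natural in $\mathsf{G}$. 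Since every map in sight is built from the exact functors $-\ract c$ and evaluation at $\unit$ and from the (invertible) module and monoidal constraints, $\Bbbk$-linearity, exactness, and hence the claimed equivalence \eqref{eq:twisted-center-asbimods-gen} of $\Bbbk$-linear abelian categories follow once $\Theta$ and $\Xi$ are seen to be mutually quasi-inverse.

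The hard part will be the coherence verifications, above all showing that $\mathsf{G}_{(X,\gamma)}$ is a bimodule functor, i.e.\ that $\phi^{l}$ as defined above satisfies the module-functor pentagon and triangle and is compatible with $\phi^{r}$ in the sense of diagram \eqref{eq:bimodule-functor-charact}. This is a multi-step diagram chase that plays the pentagon relation for the twisted half-braiding $\gamma$ — which involves the monoidal structure morphisms of $\mathsf{F}$, since the left action on $\mathsf{F}^{*}(\CCC)$ is pulled back along $\mathsf{F}$ — against the two bimodule-constraint pentagons of $\CMC$ and the unit triangle \eqref{eq:13}; the mirror obstacle on the $\Xi$ side is that the extracted $\gamma$ satisfies the twisted pentagon. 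Once the bimodule-functor axioms for $\mathsf{G}_{(X,\gamma)}$ are in place, I expect the remaining checks — functoriality of $\Theta$ and $\Xi$, that the displayed comparison maps are the claimed module natural transformations, and that they assemble into natural isomorphisms $\Xi\Theta\cong\id$ and $\Theta\Xi\cong\id$ — to amount to routine bookkeeping with unit constraints.
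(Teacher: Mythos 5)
Your proposal is correct and follows essentially the same route as the paper's proof: assign to $(X,\gamma)$ the functor $c\mapsto X\ract c$ with module constraints built from $\gamma$ and the bimodule constraint of $\CMC$, and conversely evaluate a bimodule functor at $\unit_{\Cat C}$, the two being mutually quasi-inverse up to canonical isomorphisms. The paper simply leaves the coherence verifications you outline (the module-functor pentagons/triangles and the unit-constraint bookkeeping for the two comparison isomorphisms) as straightforward, so your added explicit formulas for $\phi^{l}$, $\phi^{r}$ and the extracted half-braiding are a more detailed version of the same argument.
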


\begin{proof}
  An object $X \,{\in}\, \centCF(\CMC)$ provides a functor $\mathsf{F}^{*}(\CCC) \,{\rightarrow}\, \CMC$ 
  as follows. On the  object $c \,{\in}\, \Cat{C}$ the functor is defined as
  $\mathsf{F}^{*}(\CCC) \ni c \,\,{\mapsto}\, X \ract c$. It is straightforward to see that the structure 
  of an object in the $\mathsf{F}$-twisted center on $X$ translates into the structure of a bimodule functor.
  Conversely, given a bimodule functor $\mathsf{G}\colon \mathsf{F}^{*}(\CCC) \,{\rightarrow}\, \CMC$,
  the object $\mathsf{G}(\unit_{\Cat{C}})$ defines an object in $\centCF(\CMC)$. 
  These two constructions are mutually inverse up to  canonical natural isomorphisms.
\end{proof}

\medskip

Now we turn to the particular case that is relevant for the relative tensor product of module 
categories: We denote by $\Tau\colon \Cat{C} \,{\rightarrow}\, \Cat{C}$ the monoidal functor 
that is given by the double right dual, 
  \begin{equation}
    \Tau(c) = c^{\vee\vee} . 
  \end{equation}  
Note that we do not assume that $\Cat C$ has a pivotal structure that relates $\Tau$ to the identity 
functor. (But see Section \ref{sec:ex} for a discussion of twisted centers in the presence of a pivotal 
structure.) Hence in particular the twisted center $\centCt(\CCC)$ of the trivial bimodule category
is, in general, not equivalent to the Drinfeld center of $\Cat C$; we call $\centCt(\CCC)$ 
the \emph{twisted Drinfeld center} of $\Cat{C}$. An object of $\centCt(\CCC)$ consists of an
object $X \,{\in}\, \Cat{C}$ and a family of coherent natural isomorphisms 
$\gamma^{X}_{c}\colon c \otimes X \xrightarrow{\,\simeq\,} X \otimes c^{\vee\vee}$.

The ordinary Deligne product $\MC {\boxtimes} \DN$ of two module categories $\MC$ and $\DN$
has a natural structure of a $(\Cat{D},\Cat{C})$-bimodule category. This 
bimodule structure will be implicit in the sequel.   
To simplify notation we abbreviate by $\tCCC$ the category $\Cat{C}$ with regular right module
structure and with left module structure twisted by the double dual functor $\Tau$, i.e. 
\begin{equation}
  \label{eq:Twisted-identity-bimod}
  \tCCC=\Tau^{*}(\CCC) \,.
\end{equation}
We refer to this category simply as the twisted canonical bimodule category; similarly we 
refer to $\Tau$-twisted bimodule categories just as twisted bimodule categories in the sequel.
The previous lemma thus implies: 

\begin{corollary}
  \label{corollary:twisted-center-equ-functor-cat}
Let $\MC$ and $\CN$ be two $\Cat{C}$-module categories. Then the twisted center of 
$\MC\,{\boxtimes}\,\CN$ is equivalent to the category of bimodule functors from $\tCCC$ to 
$\MC \,{\boxtimes}\, \CN$, i.e. 
  \begin{equation}
    \label{eq:twisted-center-asbimods}
    \centCt(\MC \boxtimes \CN ) \simeq \Funl{\Cat{C},\Cat{C}}{ \tCCC, \MC \boxtimes \CN } \,.
  \end{equation}
\end{corollary}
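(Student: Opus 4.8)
The plan is to obtain the statement as the special case $\mathsf{F} \eq \Tau$ of Lemma~\ref{lemma:twisted-center-equiv}, instantiated at the bimodule category $\MC \boxtimes \CN$ in place of $\CMC$. First I would verify that this data meets the hypotheses of that lemma. Since a finite tensor category is rigid, the double right dual $\Tau\colon \Cat{C} \rr \Cat{C}$, $c \mapsto c^{\vee\vee}$, is available, and it is a monoidal functor in the sense of Definition~\ref{definition:twisted-center}: its monoidal constraint is induced from the canonical isomorphisms $(c \otimes d)^{\vee} \cong d^{\vee} \otimes c^{\vee}$, which yield $(c \otimes d)^{\vee\vee} \cong c^{\vee\vee} \otimes d^{\vee\vee}$, and the associativity and unit coherence are the standard ones. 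Next, $\MC \boxtimes \CN$ is a finite linear category, and it carries the natural $(\Cat{C},\Cat{C})$-bimodule structure recalled just above: the left $\Cat{C}$-action is the left action of $\CN$ on the second Deligne factor, the right $\Cat{C}$-action is the right action of $\MC$ on the first factor, and the bimodule constraint is the canonical one since the two actions affect disjoint factors. This is exactly the bimodule structure with respect to which $\centCt(\MC \boxtimes \CN)$ is formed.

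With these identifications, $\centCt(\MC \boxtimes \CN)$ is by Definition~\ref{definition:twisted-center} the center of $(\MC \boxtimes \CN)^{\Tau}$, i.e.\ the $\Tau$-twisted center of $\MC \boxtimes \CN$. Applying Lemma~\ref{lemma:twisted-center-equiv} to $\mathsf{F} \eq \Tau$ and to this bimodule category then yields an equivalence of $\Bbbk$-linear abelian categories
\[
  \centCt(\MC \boxtimes \CN) \;\simeq\; \Funl{\Cat{C},\Cat{C}}{\Tau^{*}(\CCC), \MC \boxtimes \CN} \,.
\]
By the abbreviation fixed in~\eqref{eq:Twisted-identity-bimod} one has $\tCCC \eq \Tau^{*}(\CCC)$, so the right-hand side equals $\Funl{\Cat{C},\Cat{C}}{\tCCC, \MC \boxtimes \CN}$, which is the assertion.

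Since the statement is a verbatim specialization of Lemma~\ref{lemma:twisted-center-equiv}, there is no genuine obstacle; the only items requiring attention --- the monoidality of $\Tau$ and the precise form of the bimodule structure on the Deligne product used to form the twisted center --- are routine and already dealt with above. All the substance resides in Lemma~\ref{lemma:twisted-center-equiv}, whose proof sends an object $(X,\gamma)$ of the twisted center to the bimodule functor $c \mapsto X \ract c$ on $\tCCC$, and a bimodule functor $\mathsf{G}$ back to the object $\mathsf{G}(\unit_{\Cat{C}})$.
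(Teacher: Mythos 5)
Your proposal is correct and follows exactly the paper's route: the corollary is obtained there as the immediate specialization of Lemma \ref{lemma:twisted-center-equiv} to $\mathsf{F}=\Tau$ and to the bimodule category $\MC\boxtimes\CN$ with its canonical $(\Cat{C},\Cat{C})$-structure, using $\tCCC=\Tau^{*}(\CCC)$. The routine checks you add (monoidality of the double dual, the implicit bimodule structure on the Deligne product) are consistent with the paper's conventions and do not change the argument.
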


\begin{remark}
The double \emph{left} dual functor of a finite tensor category $\Cat{C}$ is the monoidal functor 
$\Rho \colon \Cat{C} \,{\rightarrow}\, \Cat{C}$ that maps objects $c \,{\in}\, \Cat{C}$ to
$\Rho(c) \,{=}\, \leftidx{^{\vee\vee\!}}{c}{}$. For general finite tensor categories the functors
$\Tau$ and $\Rho$ might not be isomorphic. However, the Radford theorem for finite tensor categories
\cite{EtinAnalogue} is equivalent to the statement that there exists an equivalence
$\Tau^{*}(\CCC) \,{\simeq}\, \Rho^{*}(\CCC)$ of bimodule categories. 
\end{remark}

Next we use the twisted center to describe the relative tensor product of module categories, thereby 
clarifying results in the literature which ignored the twisting \cite[Prop.\,3.8]{ENOfuhom}.
Note first that there is an obvious forgetful functor 
$\mathsf{U}: \centCt(\MC \boxtimes \CN) \,{\rightarrow}\, \Cat{M} \boxtimes \Cat{N}$.
Let $\Cat{A}$ be a linear category. Taking adjoints yields an equivalence
between balanced functors and functors to the twisted center. Concretely, we have

\begin{lemma}
  \label{lemma:adjunction-twisted-centre} 
The right adjoint $\mathsf{F}^{r}_{}$ of a right exact $\Cat{C}$-balanced functor
  $\mathsf{F}\colon \MC \boxtimes \CN \,{\rightarrow}\, \Cat{A}$ 
  is endowed, through the balancing, with the structure of a lift to the twisted center
  $\centCt(\MC \boxtimes \CN)$\,$:$ There exists a functor 
  $\widehat{\mathsf{F}^{r}_{}}\colon \Cat{A} \,{\rightarrow}\, \centCt(\MC \boxtimes \CN)$,
  unique up to unique natural isomorphism, such that
  $\mathsf{U} \,{\circ}\, \widehat{\mathsf{F}^{r}_{}} \,{=}\, \mathsf{F}^{r}_{}$.
The assignment $\mathsf{F} \mapsto \widehat{\mathsf{F}^{r}}$ extends to an equivalence
 \begin{equation}
    \label{eq:twisted-center-via-right-adj}
    \Funbalre(\MC \boxtimes \CN, \Cat{A}) \cong \Funle(\Cat{A}, \centCt(\MC \boxtimes \CN)).
  \end{equation}
\end{lemma}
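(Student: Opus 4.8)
The plan is to establish the equivalence \eqref{eq:twisted-center-via-right-adj} by combining the standard adjoint-functor correspondence between right exact functors and left exact functors with a careful bookkeeping of how the balancing constraint on $\mathsf{F}$ dualizes to a central structure on $\mathsf{F}^r$. First I would recall, via the adjoint functor theorem (cited in the excerpt from \cite[Prop.\,1.7]{DSSbal}), that passing to the right adjoint defines an equivalence $\Funre(\MC \boxtimes \CN, \Cat{A}) \simeq \Funle(\Cat{A}, \MC \boxtimes \CN)^{\opp}$, or with suitable orientation of $2$-morphisms an equivalence $\Funre(\MC \boxtimes \CN, \Cat{A}) \simeq \Funle(\Cat{A}, \MC \boxtimes \CN)$; the point is that $\mathsf{F} \mapsto \mathsf{F}^r$ is essentially surjective and fully faithful on the relevant hom-categories. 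This is the underlying ``plain'' version of the statement, to which the $\Cat{C}$-balanced structure must be adjoined on both sides.

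Next I would unpack what the balancing $\beta^{\mathsf{F}}_{m,c,n}\colon \mathsf{F}(m\ract c\times n)\rr\mathsf{F}(m\times c\act n)$ becomes under adjunction. The key computation is: for $c\in\Cat{C}$, the endofunctor $c\act-$ of $\MC\boxtimes\CN$ (acting on the appropriate tensor factor) has right adjoint given, up to the duality $\Hom(c\act n, -)\cong\Hom(n, c^\vee\act-)$ on the module category, by $c^\vee\act-$; similarly the right action $-\ract c$ has right adjoint $-\ract{}^\vee\!c$. Composing the isomorphism $\mathsf{F}^r(a)\ract c \cong \big(\mathsf{F}\circ(-\ract c)\big)^r(a)$ with the natural isomorphism induced by $\beta^{\mathsf{F}}$ and then with $\big(\mathsf{F}\circ(c^{\vee\vee}\act-)\big)^r(a)\cong c^{\vee\vee}\act\mathsf{F}^r(a)$ produces a natural isomorphism $c^{\vee\vee}\act\mathsf{F}^r(a)\xrightarrow{\;\cong\;}\mathsf{F}^r(a)\ract c$ — equivalently, after reindexing $c\mapsto c^\vee$ and inverting, a family $\gamma^{\mathsf{F}^r(a)}_c\colon c\act\mathsf{F}^r(a)\xrightarrow{\cong}\mathsf{F}^r(a)\ract\Tau(c)$ with $\Tau(c)=c^{\vee\vee}$, exactly the datum of an object of $\centCt(\MC\boxtimes\CN)$. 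I would then verify that the pentagon axiom \eqref{eq:balanced-functor} for $\beta^{\mathsf{F}}$, together with naturality of the duality isomorphisms and the triangle axiom \eqref{eq:balanced-unit}, translates under this chain of adjunctions into the pentagon (hexagon) coherence for $\gamma$ with respect to the mixed tensor product, which is precisely the condition defining $\centCt$. This gives the lift $\widehat{\mathsf{F}^r}$, and the equation $\mathsf{U}\circ\widehat{\mathsf{F}^r}=\mathsf{F}^r$ holds by construction; uniqueness follows because $\mathsf{U}$ is faithful and the central structure is forced by the displayed formulas. One also checks that $\widehat{\mathsf{F}^r}$ is again left exact, since $\mathsf{F}^r$ is and $\mathsf{U}$ creates limits.

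For the functoriality and the claim that $\mathsf{F}\mapsto\widehat{\mathsf{F}^r}$ is an equivalence of categories, I would argue as follows. A balanced natural transformation $\eta\colon\mathsf{F}\rr\mathsf{G}$ — i.e.\ one making \eqref{eq:balanced-nat} commute — has mate $\eta^r\colon\mathsf{G}^r\rr\mathsf{F}^r$ under adjunction, and the compatibility square \eqref{eq:balanced-nat} is, by the usual mate calculus, equivalent to $\eta^r$ being a morphism of central objects, i.e.\ a bimodule (hence central) natural transformation $\widehat{\mathsf{G}^r}\rr\widehat{\mathsf{F}^r}$; so we get a functor $\Funbalre(\MC\boxtimes\CN,\Cat{A})\to\Funle(\Cat{A},\centCt(\MC\boxtimes\CN))^{\opp}$, which I will orient as in the statement. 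Essential surjectivity: given a left exact $\mathsf{H}\colon\Cat{A}\to\centCt(\MC\boxtimes\CN)$, the composite $\mathsf{U}\circ\mathsf{H}$ is left exact, hence (adjoint functor theorem again, now in the direction $\Funle\simeq\Funre^{\opp}$) has a left adjoint $\mathsf{F}$, which one equips with a balancing by reversing the above construction using the central structure carried by $\mathsf{H}$; then $\widehat{\mathsf{F}^r}\cong\mathsf{H}$. Full faithfulness on hom-categories follows because the mate correspondence on natural transformations is bijective and, by the square-to-square translation just described, restricts to a bijection between balanced natural transformations and central natural transformations.

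\textbf{Main obstacle.} The routine part is the adjoint-functor existence statements and the bijection of natural transformations via mates; the genuinely delicate step is the coherence translation in the second paragraph — verifying that the pentagon for the balancing $\beta^{\mathsf{F}}$ goes over, under the triple of adjunction isomorphisms and the duality isomorphisms \eqref{eq:adjunction-hom}, to the pentagon defining an object of the \emph{twisted} center, with the double right dual $\Tau$ appearing in exactly the right slot. This is where the ``subtlety omitted in the literature'' lives: one must track the iterated right duals carefully (a single $c^\vee$ from each of the left- and right-action adjunctions combine into the $c^{\vee\vee}$-twist), and confirm that the associativity/naturality coherences of the duality in $\Cat{C}$ match the mixed-pentagon axiom of $\centCt$ rather than some untwisted or differently twisted variant. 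I expect this verification to be a moderately long but mechanical diagram chase, which I would present schematically rather than in full.
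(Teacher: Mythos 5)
Your route is essentially the paper's: the half-braiding on $\mathsf{F}^r(a)$ is obtained by transporting the balancing through the adjunction, the paper merely packaging this transfer as a commutative diagram of Hom-spaces plus the Yoneda lemma (diagram \eqref{eq:vice-versa}) rather than as a chain of adjoints of the action endofunctors, and treating morphisms and the converse direction exactly as you propose. So the architecture of your argument is fine.

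However, the dual bookkeeping in your key computation --- which you yourself single out as the crux --- comes out wrong, and since the whole content of the lemma is \emph{which} double dual appears, this must be repaired rather than deferred. With the paper's conventions \eqref{eq:adjunction-hom}, the right adjoint of $-\ract c$ is $-\ract c^{\vee}$ and the right adjoint of $c\act -$ is ${}^{\vee\!}c\act -$, not $-\ract{}^{\vee\!}c$ and $c^{\vee}\act -$ as you state; transporting $\beta^{\mathsf{F}}$ then gives a natural isomorphism $\mathsf{F}^r(a)\ract c^{\vee}\cong{}^{\vee\!}c\act\mathsf{F}^r(a)$, and substituting $c=d^{\vee}$ yields $\gamma_d\colon d\act\mathsf{F}^r(a)\cong\mathsf{F}^r(a)\ract d^{\vee\vee}$, i.e.\ the $\Tau$-twist. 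With your mirrored adjunctions carried through consistently one instead lands at $d\act\mathsf{F}^r(a)\cong\mathsf{F}^r(a)\ract{}^{\vee\vee\!}d$, i.e.\ in the $\Rho$-twisted (double \emph{left} dual) center --- precisely the ``differently twisted variant'' the lemma is meant to exclude; moreover your displayed identification $(\mathsf{F}\circ(-\ract c))^r(a)\cong\mathsf{F}^r(a)\ract c$ is off by one dual, since $(\mathsf{F}\circ H)^r=H^r\circ\mathsf{F}^r$. A secondary point: uniqueness of the lift does not follow from faithfulness of $\mathsf{U}$ alone (any choice of half-braidings on the objects $\mathsf{F}^r(a)$ gives a functor with $\mathsf{U}\circ\widehat{\mathsf{F}^r}=\mathsf{F}^r$); as in the paper, the lift is pinned down by requiring that the unit and counit of the adjunction between $\mathsf{F}$ and $\mathsf{F}^r$ be \emph{balanced} natural transformations, which is what your phrase ``forced by the displayed formulas'' should be made to say explicitly.
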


\begin{proof}
  Assume that $\mathsf{F}\colon \Cat{M}\boxtimes \Cat{N} \,{\to}\, \Cat{A}$ is a 
  right exact functor, and choose objects $a \,{\in} \Cat{A}$ and 
  $m \boxtimes n \,{\in} \Cat{M} \boxtimes \Cat{N}$.
  Then the following diagram either defines the  natural isomorphisms 
  $\gamma^{\mathsf{F}^{r}(a)}$ (invoking the Yoneda lemma) that equip the objects 
  $\mathsf{F}^{r}(a)$ with the structure of objects in the twisted center $\centCt(\MC \boxtimes \CN)$ 
  and thus defines the functor $\widehat{\mathsf{F}^{r}_{}}$ if $\mathsf{F}$ is balanced
  or, conversely, given such a lift to $\centCt(\MC \boxtimes \CN)$ 
  it defines the balancing structure $f$ of $\mathsf{F}$:
  \begin{equation}
    \label{eq:vice-versa}
    \begin{tikzcd}
      \Hom_{\Cat{M} \boxtimes \Cat{N}}(m\boxtimes n,\mathsf{F}^{r}_{}(a) \ract c^{\vee}) \ar{r}{\gamma} \ar{d}{}
      & \Hom_{\Cat{M} \boxtimes \Cat{N}}(m\boxtimes n, \leftidx{^\vee\!}{c}{} \act \mathsf{F}^{r}_{}(a)) \ar{d}{}
      \\ 
      \Hom_{\Cat{M} \boxtimes \Cat{N}}( m \ract c \boxtimes n,\mathsf{F}^{r}_{}(a)) \ar{d}{}  
      & \Hom_{\Cat{M} \boxtimes \Cat{N}}( m \boxtimes c \act n,\mathsf{F}^{r}_{}(a)) \ar{d}{} 
      \\
      \Hom_{\Cat{A}}(\mathsf{F}(m \ract c \boxtimes n),a) \ar{r}{f} 
      & \Hom_{\Cat{A}}( \mathsf{F}(m \boxtimes c \act n),a)
    \end{tikzcd}
  \end{equation}
  Compatibility of the isomorphisms $\gamma$ with the monoidal structure follows from the 
  corresponding compatibility of the natural isomorphisms $f$, and vice versa. 
  Equipped with the structures appearing in the commutative diagram (\ref{eq:vice-versa}), 
  the adjunction between $\mathsf{F}$ and $\mathsf{F}^{r}_{}$ consists of balanced natural 
  isomorphisms; this property characterizes the functor $\widehat{\mathsf{F}^{r}_{}}$ uniquely,
  so the statement follows. 
\end{proof}

\medskip

Next we note the following useful result:

\begin{lemma}
  \label{lemma:exact-functor-cat} 
  Let $\Funl{\Cat{C}}{\CM,\CN}$ be an abelian category of module functors. 
  For any object $m \,{\in}\, \Cat{M}$, the evaluation functor
  $\mathsf{U}_{m}\colon \Funl{\Cat{C}}{\CM,\CN} \,{\to}\, \Cat{N} $ is exact.
\end{lemma}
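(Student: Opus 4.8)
The plan is to show that $\mathsf{U}_m$ preserves kernels and cokernels; since $\mathsf{U}_m$ is manifestly additive, this yields exactness. Concretely, I will argue that kernels and cokernels in $\Funl{\Cat{C}}{\CM,\CN}$ are computed objectwise. Given a module natural transformation $\eta\colon \mathsf{F} \to \mathsf{G}$, define $\mathsf{K}\colon \Cat{M} \to \Cat{N}$ on objects by $\mathsf{K}(m) = \ker(\eta_m)$, with the action on morphisms induced by functoriality of kernels in $\Cat{N}$ together with the naturality of $\eta$. The essential point is to equip $\mathsf{K}$ with a module functor structure. By the defining condition \eqref{eq:module-nat-transf} of a module natural transformation, $\eta_{c \act m}$ is conjugate, through the invertible module constraints $\phi^{\mathsf{F}}_{c,m}$ and $\phi^{\mathsf{G}}_{c,m}$, to $\id_c \act \eta_m$; hence $\phi^{\mathsf{F}}_{c,m}$ restricts to an isomorphism $\ker(\eta_{c \act m}) \xrightarrow{\;\sim\;} \ker(\id_c \act \eta_m)$. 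Since the action functor $c \act -\colon \Cat{N} \to \Cat{N}$ is exact by Definition \ref{definition:mod-cat}, the canonical comparison morphism $c \act \ker(\eta_m) \to \ker(\id_c \act \eta_m)$ is invertible as well. Composing these two isomorphisms defines the desired constraint $\mathsf{K}(c \act m) \xrightarrow{\;\sim\;} c \act \mathsf{K}(m)$; its pentagon and triangle axioms follow from those of $\phi^{\mathsf{F}}$ and of $\mu^{\Cat{M}}$ by a diagram chase that uses exactness, and by construction the objectwise monomorphisms $\mathsf{K}(m) \hookrightarrow \mathsf{F}(m)$ underlie a module natural transformation $\iota\colon \mathsf{K} \to \mathsf{F}$.

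Next I would verify that $(\mathsf{K},\iota)$ is a kernel of $\eta$ in $\Funl{\Cat{C}}{\CM,\CN}$: for any module natural transformation $\theta\colon \mathsf{H} \to \mathsf{F}$ with $\eta \circ \theta = 0$, each $\theta_m$ factors uniquely through $\ker(\eta_m)$ in $\Cat{N}$, and uniqueness of these factorizations shows that the resulting family is natural and, together with the compatibilities of the structure morphisms established above, a module natural transformation. Hence $\mathsf{U}_m(\ker\eta) = \ker(\mathsf{U}_m\eta)$. The same argument applied to cokernels, using once more that $c \act -$ is exact so that $c \act \coker(\eta_m) \cong \coker(\id_c \act \eta_m) \cong \coker(\eta_{c\act m})$, gives $\mathsf{U}_m(\coker\eta) = \coker(\mathsf{U}_m\eta)$. (In passing this also re-proves that $\Funl{\Cat{C}}{\CM,\CN}$ is abelian, with kernels and cokernels taken objectwise.)

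To conclude, for a short exact sequence $0 \to \mathsf{F}' \xrightarrow{\alpha} \mathsf{F} \xrightarrow{\beta} \mathsf{F}'' \to 0$ in $\Funl{\Cat{C}}{\CM,\CN}$ one has $\alpha = \ker\beta$ and $\beta = \coker\alpha$, so evaluating at $m$ and invoking the previous paragraph yields $\mathsf{F}'(m) = \ker(\beta_m)$ and $\mathsf{F}''(m) = \coker(\alpha_m)$, i.e.\ an exact sequence $0 \to \mathsf{F}'(m) \to \mathsf{F}(m) \to \mathsf{F}''(m) \to 0$ in $\Cat{N}$; thus $\mathsf{U}_m$ is exact. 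The one step that requires genuine (if routine) care is checking that the objectwise kernel and cokernel really do carry module functor structures satisfying the coherence axioms, and that these satisfy the universal properties inside the category of module functors; as indicated, all of this reduces, via \eqref{eq:module-nat-transf}, to the definitional exactness of the action functors $c \act -$.
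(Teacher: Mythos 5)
Your proposal is correct and follows essentially the same route as the paper: both arguments equip the objectwise kernel and cokernel of a module natural transformation with a module functor structure by combining the module naturality condition \eqref{eq:module-nat-transf} with the exactness of the action functors $c \act -$, and then conclude that exactness of sequences of module functors is detected objectwise, so that evaluation at $m$ is exact. The only cosmetic difference is that the paper quotes Freyd for objectwise exactness in the plain functor category and then upgrades, whereas you verify the universal properties in the module functor category directly; the substance is the same.
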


\begin{proof}
  If we consider $\Cat{M}$ and $\Cat{N}$ as abelian categories, a sequence of functors and 
  natural transformations in $\Fun(\Cat{M},\Cat{N})$ is exact if and only if it is exact 
  at every object $m \,{\in}\, \Cat{M}$ \cite[Sec.\,5.1]{Freyd}. Hence the evaluation functor is 
  exact in this case. Further, for a natural transformation $\nu\colon \mathsf{F} \,{\to}\, \mathsf{G}$
  of \emph{module} functors, the kernel and cokernel of $\nu$, regarded as natural transformation between 
  additive functors, are canonically module functors as well. Indeed, if
  $\mathsf{K}(m) \,{\to}\, \mathsf{F}(m) \,{\xrightarrow{\,\nu(m)\,}}\, \mathsf{G}(m)$
  is the kernel of $\nu$ at $m \,{\in}\, \Cat{M}$, then it follows with the help of the
  exactness of the action of $\Cat{C}$ that both $c \ract \mathsf{K}(m)$ and
  $\mathsf{K}(c \ract m)$ are kernels for 
  $\mathsf{F}(c \ract m) \,{\xrightarrow{\,\nu(c \ract m)\,}}\, \mathsf{G}(c \ract m)$. 
  Thus the universal property of the kernel provides the module constraint of the additive functor
  $\mathsf{K}$. An analogous statement holds for the cokernel. It therefore follows again that 
  a sequence of module functors is exact if and only if it is exact at every object $m \,{\in}\, \Cat{M}$.
  Thus the evaluation functor $\mathsf{U}_{m}$ is exact. 
\end{proof}

\medskip

We are now in a position to establish
the following new realization of of a relative tensor product of module categories:

\begin{proposition}
  \label{proposition:twisted-center-tensor}
  Let $\Cat{C}$ be a finite tensor category and $\MC$ and $\CN$ module categories over
  $\Cat{C}$. Then the twisted center $\centCt(\MC \boxtimes \CN)$ can be
  endowed with the structure of a relative tensor product of $\MC$ and $\CN$
  as follows:
  \begin{propositionlist}
  \item[{\rm (i)}] \label{item:first-univ-bal}
    The universal balanced functor from $\MC \boxtimes \CN\to
    \centCt(\MC \boxtimes \CN)$ is the left adjoint $\mathsf{U}^{l}$ 
   of the forgetful functor. 
  \item[{\rm (ii)}] \label{item:sec-equiv}
    The equivalence $\Psi\colon \Funbalre(\MC \boxtimes \CN, \Cat{A}) \,{\rightarrow}\,
    \Funre( \centCt(\MC \boxtimes \CN), \Cat{A})$ for arbitrary linear categories $\Cat{A}$
    is defined using the lift $\widehat{\mathsf{F}^{r}}$ of the right adjoint $\mathsf{F}^{r}$described
    in Lemma \ref{lemma:adjunction-twisted-centre}: we set 
    $\Psi(\mathsf{F}) \,{:=}\, (\widehat{\mathsf{F}^{r}_{}})^{l}$.
  \item[{\rm (iii)}] \label{item:third-phi}
    The natural isomorphism
    $\kappa\colon \Psi(\mathsf{F})\,{\circ}\,\mathsf{U}^{l} \,{\xrightarrow{\,\cong\,}}\, \mathsf{F}$
    for a balanced functor
    $\mathsf{F}\colon \MC \boxtimes \CN \rightarrow \Cat{A}$ is provided by the isomorphism 
    \begin{equation}
      \label{eq:phi-twisted}
      \Psi(\mathsf{F}) \circ \mathsf{U}^{l} = ( \widehat{ \mathsf{F}^{r}_{}})^{l} \circ \mathsf{U}^{l}
      \xrightarrow{~\cong~} (\mathsf{U} \circ \widehat{ \mathsf{F}^{r}_{}})^{l} = (\mathsf{F}^{r}_{})^{l}_{}
      \cong \mathsf{F} \,.  
    \end{equation}
  \item[{\rm (iv)}] \label{item:forth-kappa}
    For any functor $\mathsf{G}\colon \centCt(\MC \boxtimes \CN) \,{\rightarrow}\, \Cat{A}$ the natural 
    isomorphism $\Psi( \mathsf{G} \,{\circ}\, \mathsf{U}^{l} ) \,{\cong}\, \mathsf{G}$ is obtained from
    \begin{equation}
      \label{eq:kappa-twist}
      \Psi(\mathsf{G} \circ \mathsf{U}^{l})= (\widehat{(\mathsf{G} \mathsf{U}^{l})^{r}_{}})^{l}
      \cong (\widehat{\mathsf{U} \circ G^{r}_{}})^{l} \cong (\mathsf{G}^{r})^{l}.    
    \end{equation}
    $($Here we use that we can choose $\widehat{\mathsf{U} \,{\circ}\, \mathsf{H}} \,{=}\,\mathsf{H}$ for
    $\mathsf{H}\colon \Cat{A} \,{\to}\, \centCt(\MC \boxtimes \CN)$.$)$
  \end{propositionlist}
\end{proposition}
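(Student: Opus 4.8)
The plan is to obtain the universal property directly from Lemma~\ref{lemma:adjunction-twisted-centre}, combined with the standard observation that for finite abelian categories $\Cat{X}$, $\Cat{A}$ the operations of passing to adjoints give mutually quasi-inverse contravariant equivalences $(\cdot)^{r}\colon\Funre(\Cat{X},\Cat{A})\to\Funle(\Cat{A},\Cat{X})$ and $(\cdot)^{l}\colon\Funle(\Cat{A},\Cat{X})\to\Funre(\Cat{X},\Cat{A})$; existence of the relevant adjoints is \cite[Prop.\,1.7]{DSSbal}. Throughout one uses that $\centCt(\MC\boxtimes\CN)$ is a finite abelian category, being the module-functor category $\Funl{\Cat{C},\Cat{C}}{\tCCC,\MC\boxtimes\CN}$ of Corollary~\ref{corollary:twisted-center-equ-functor-cat} (for bimodule functors, i.e.\ module functors over the finite tensor category $\Cat{C}\boxtimes\Catc{C}$).

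First I would pin down the forgetful functor $\mathsf{U}\colon\centCt(\MC\boxtimes\CN)\to\MC\boxtimes\CN$. Under the equivalence of Corollary~\ref{corollary:twisted-center-equ-functor-cat}, and as in the proof of Lemma~\ref{lemma:twisted-center-equiv}, $\mathsf{U}$ is identified with evaluation of a bimodule functor at the unit object $\unit_{\Cat{C}}$; hence $\mathsf{U}$ is exact by Lemma~\ref{lemma:exact-functor-cat}, so it admits a left adjoint $\mathsf{U}^{l}\dashv\mathsf{U}$, and in particular the right adjoint of $\mathsf{U}^{l}$ is $\mathsf{U}$. To equip $\mathsf{U}^{l}$ with its balancing I would apply the equivalence of Lemma~\ref{lemma:adjunction-twisted-centre} with $\Cat{A}=\centCt(\MC\boxtimes\CN)$ to the identity functor: since the lift of $\mathsf{F}^{r}$ is characterized by $\mathsf{U}\circ\widehat{\mathsf{F}^{r}}=\mathsf{F}^{r}$, the balanced functor corresponding to $\id$ is the one with right adjoint $\mathsf{U}$, namely $\mathsf{U}^{l}$, whose balancing constraint is the one read off from diagram~(\ref{eq:vice-versa}). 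This is the universal balanced functor $\mathsf{B}:=\mathsf{U}^{l}$ of item~(i), and $\Phi\colon\mathsf{G}\mapsto\mathsf{G}\circ\mathsf{U}^{l}$ then lands in $\Funbalre$, being right exact as a composite of right exact functors and balanced via the balancing of $\mathsf{U}^{l}$.

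Next I would show that $\Psi$ is an equivalence for all linear $\Cat{A}$, by exhibiting it as the composite of the equivalence $\mathsf{F}\mapsto\widehat{\mathsf{F}^{r}}$ of Lemma~\ref{lemma:adjunction-twisted-centre} with the contravariant equivalence $(\cdot)^{l}$ above; tracing a functor through, this composite sends $\mathsf{F}$ to $(\widehat{\mathsf{F}^{r}})^{l}=\Psi(\mathsf{F})$, which as a left adjoint is automatically right exact. To see that $\Phi$ is a quasi-inverse I would verify the two natural isomorphisms. For $\Phi\Psi$ this is the chain~(\ref{eq:phi-twisted}): the left adjoint of a composite being the reversed composite of left adjoints gives $(\widehat{\mathsf{F}^{r}})^{l}\circ\mathsf{U}^{l}\cong(\mathsf{U}\circ\widehat{\mathsf{F}^{r}})^{l}$, then $\mathsf{U}\circ\widehat{\mathsf{F}^{r}}=\mathsf{F}^{r}$, and $(\mathsf{F}^{r})^{l}\cong\mathsf{F}$ because $\mathsf{F}$ is right exact; these are isomorphisms of balanced functors, which is immediate from naturality given how $\mathsf{U}^{l}$ acquired its balancing. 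For $\Psi\Phi$ this is the chain~(\ref{eq:kappa-twist}): here $(\mathsf{G}\circ\mathsf{U}^{l})^{r}\cong\mathsf{U}\circ\mathsf{G}^{r}$ (right adjoint of a composite, using that the right adjoint of $\mathsf{U}^{l}$ is $\mathsf{U}$), then the normalization $\widehat{\mathsf{U}\circ\mathsf{H}}:=\mathsf{H}$ applied to the left-exact functor $\mathsf{H}=\mathsf{G}^{r}$ (legitimate since $\mathsf{H}$ is then a lift of $\mathsf{U}\circ\mathsf{H}$ and the lift is unique up to unique natural isomorphism), and finally $(\mathsf{G}^{r})^{l}\cong\mathsf{G}$. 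Thus $\Phi$ is an equivalence, which is exactly the universal property of Definition~\ref{definition:tensor-prod}, and the two chains define $\Psi$, the datum $\varphi$ (the inverse of (\ref{eq:phi-twisted})) and $\kappa$ ((\ref{eq:kappa-twist}) followed by $(\mathsf{G}^{r})^{l}\cong\mathsf{G}$), proving items~(ii)--(iv).

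The part I expect to be the main obstacle is verifying that $(\varphi,\kappa)$ is an \emph{adjoint} equivalence, i.e.\ that the two snake identities hold in the respective Hom-categories, as Definition~\ref{definition:tensor-prod} demands. The approach is to note that each isomorphism occurring in (\ref{eq:phi-twisted}) and (\ref{eq:kappa-twist}) is (a component of) the unit or counit of an adjunction — namely $\mathsf{U}^{l}\dashv\mathsf{U}$, $\mathsf{F}\dashv\mathsf{F}^{r}$, $(\widehat{\mathsf{F}^{r}})^{l}\dashv\widehat{\mathsf{F}^{r}}$, an adjoint-equivalence structure imposed on the equivalence of Lemma~\ref{lemma:adjunction-twisted-centre}, and the canonical adjoint-equivalence structure relating $(\cdot)^{l}$ and $(\cdot)^{r}$ — so that $\varphi$ and $\kappa$ arise as pastings of adjoint equivalences; since a pasting of adjoint equivalences is again one, the triangle identities for $(\varphi,\kappa)$ follow from those of the pieces. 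Finally, it is harmless that $\widehat{(\cdot)}$, and hence $\Psi$, is only defined up to unique natural isomorphism: one fixes the choices made above — in particular $\widehat{\mathsf{U}\circ\mathsf{H}}:=\mathsf{H}$ — once and for all, which is what makes the equalities in (\ref{eq:phi-twisted}) and (\ref{eq:kappa-twist}) literal.
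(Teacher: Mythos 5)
Your proposal is correct and follows essentially the same route as the paper: $\mathsf{U}$ is exact because it is evaluation at the unit (Lemma \ref{lemma:exact-functor-cat}), the balancing on $\mathsf{U}^{l}$ is obtained by applying Lemma \ref{lemma:adjunction-twisted-centre} with $\Cat{A}=\centCt(\MC\boxtimes\CN)$ to the identity functor, and $\Psi$ together with the isomorphism chains \eqref{eq:phi-twisted} and \eqref{eq:kappa-twist} come from that lemma combined with passing to adjoints. The only real difference is one of explicitness: where you declare the balancedness of $\Psi(\mathsf{F})\circ\mathsf{U}^{l}\cong\mathsf{F}$ to be immediate, the paper verifies it by an explicit commuting-diagram argument unwinding the balancing constraints of $\mathsf{F}$ and $\mathsf{U}^{l}$ and the central structure on $\mathsf{F}^{r}(a)$, while conversely your discussion of the adjoint-equivalence (snake) conditions is more explicit than anything in the paper's proof.
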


\begin{proof}
  First we show that the forgetful functor $\mathsf{U}$ is exact, implying in particular that a 
  left adjoint exists. Under the equivalence (\ref{eq:twisted-center-asbimods}), the value 
  of $\mathsf{U}$ at a bimodule functor 
  $\mathsf{G} \in  \Funl{\Cat{C},\Cat{C}}{ \tCCC, \MC \boxtimes \CN }$ is given by 
  $\mathsf{U}(\mathsf{G}) \,{=}\, \mathsf{G}(\unit)$. It thus follows from 
  Lemma \ref{lemma:exact-functor-cat} that $\mathsf{U}$ is an exact functor.  
  \\
  Setting $\Cat{A} \,{=}\, \centCt(\MC \boxtimes \CN)$ and taking the identity functor on the 
  right hand side of (\ref{eq:twisted-center-via-right-adj}) shows that 
  the left adjoint of $\mathsf{U}$ is a balanced functor 
  $\mathsf{U}^{l}\colon \MC \boxtimes \CN \,{\rightarrow}\, \centCt(\MC \boxtimes \CN)$. 
  \\
  Lemma \ref{lemma:adjunction-twisted-centre} provides the definition $\widehat{\mathsf{F}^{r}}$. 
  For part (iii) we must show that the prescribed isomorphism
  $\kappa\colon \Psi(\mathsf{F}) \,{\circ}\, \mathsf{U}^{l} \,{\cong}\, \mathsf{F} $ is balanced. 
  This amounts to proving that the outer rectangle in the diagram 
  \begin{equation*}
    \begin{tikzcd}
      \Hom_{\Cat{A}}( (\widehat{\mathsf{F}^{r}_{}})^{l} \mathsf{U}^{l}(m \ract c \boxtimes n),a) \ar{rr}{\kappa}
      \ar{ddd}[left]{f} \ar{dr}{}  &&  \Hom_{}( \mathsf{F}(m \ract c \boxtimes n),a) \ar{ddd}{f}
      \\
      & \Hom_{}( m\boxtimes n,\mathsf{F}^{r}_{}(a) \ract c^{\vee}) \ar{d}{}  \ar{ur}{} &   
      \\
      & \Hom_{}( m\boxtimes n, \leftidx{^\vee\!}{c}{} \act \mathsf{F}^{r}(a)) \ar{dl}{}  \ar{dr}{} & 
      \\
      \Hom_{}( (\widehat{\mathsf{F}^{r}_{}})^{l} \mathsf{U}^{l}(m  \boxtimes c \act n),a) \ar{rr}{\kappa} 
      &&  \Hom_{}( \mathsf{F}(m \boxtimes c \act n),a )
    \end{tikzcd}
  \end{equation*}
  commutes. In this diagram $f$ denotes the balancing constraint of the functor $\mathsf{F}$.
  All arrows are natural isomorphisms; the arrows connecting the $\Hom_{}$ spaces in the
  inner part of the diagram with the ones in the outer rectangle are obtained by a combination of
  the duality morphisms of $\Cat{C}$ and the adjunction between $\mathsf{F}$ and $\mathsf{F}^{r}_{}$.
  It follows that the right inner quadrilateral commutes by definition of the central structure of 
  $\mathsf{F}^{r}_{}(a)$. The left inner quadrilateral commutes by definition of the
  balancing constraint of $\mathsf{F}$ and of $\mathsf{U}^{l}$. The two triangles commute 
  by definition of the natural isomorphism $\kappa$. Hence the outer diagram commutes as well. 
  \\
  Part (iv) follows from the construction in Lemma \ref{lemma:adjunction-twisted-centre}.
\end{proof}

%%%%%%%%%%%%%%%%%%%%%%%%%%%%%%%%%%%%%%%%%%%%%%%%%%%%%%%%%%%%%%%%%%%%%%%%%%%%%%%% 

\section{The \ct\ of bimodule categories}\label{sec3}

In this section we introduce the notion of a \emph{\ct} of bimodule categories 
by means of a universal property. Then we provide several models for the \ct,
which in particular shows the existence of a \ct, and then establish a few basic properties.

\subsection{The twisted center as \ct}

We start with

\begin{definition}
  \label{definition:circular-balanced}
 {\rm (i)}\,
  Let $\CMC$ be a bimodule category over a finite tensor category $\Cat{C}$, and $\Cat{A}$ a linear category. 
  A functor $\mathsf{F}\colon \CMC \,{\rightarrow}\, \Cat{A}$ is called $\Cat{C}$-\emph{balanced 
    with balancing constraint} $\beta^{\mathsf{F}}$ if it is equipped with a family of natural isomorphisms 
  \begin{equation}
    \beta^{\mathsf{F}}_{x,c}:\quad \mathsf{F}(x \ract c) \rr \mathsf{F}( c \act x)
  \end{equation}
  such that the 
   % XXX  
 diagrams
  \begin{equation}
    \begin{tikzcd}
      ~ & \mathsf{F}((x \ract c) \ract d)
      \ar{dl}[left,yshift=7pt] {\beta^{\mathsf{F}}_{x\ract c,d}}
      \ar{dr}{}[yshift=-2pt] {\mathsf{F}((\mu^r_{x,c,d})^{-1})}
      & ~
      \\
      \mathsf{F}(d \act\, (x \ract c)) \ar{d}[left] {\mathrm F(\gamma^{-1}_{d,x,c})}
      & ~ & \mathsf{F}(x \ract\, (c \,{\otimes}\, d))
      \ar{d}{} {\beta^{\mathsf{F}}_{x,c\otimes d}}
      \\
      \mathsf{F}((d \act x) \ract c)
      \ar{dr}[left,yshift=-4pt,xshift=4pt] {\beta^{\mathsf{F}}_{d\act x,c}}
      & ~ & \mathsf{F}((c \,{\otimes}\, d) \act x)
      \ar{dl}[yshift=2pt] {\mathsf{F}(\mu^l_{c,d,x})}
      \\
      ~ & \mathsf{F}(c \act\, (d \act x)) & ~
    \end{tikzcd}
  \end{equation}
 and 
  \begin{equation}
    \begin{tikzcd}
      \mathsf{F}(x \ract \unit_{\Cat C}) \ar{r} {\beta^{\mathsf{F}}_{x,\unit_{\Cat C}}}
      \ar{d}[left,xshift=-1pt] {\mathsf{F}(\rho^{\Cat M}_x)}
      & \mathsf{F}( \unit_{\Cat{C}} \act x)
      \ar{dl}[yshift=2pt] {\mathsf{F}(\lambda^{\Cat M}_x)} 
      \\
      \mathsf{F}(x) & ~ 
    \end{tikzcd}
  \end{equation}
  commute for all objects $c,d \,{\in}\, \Cat{C}$ and $x \,{\in}\, \CMC$.
 \\[2pt]
 {\rm (ii)}\,
  For two balanced functors $\mathsf{F}, \mathsf{G}\colon \CMC \,{\rightarrow}\, \Cat{A}$,
  a \emph{balanced natural transformation} $\eta\colon \mathsf{F}\rightarrow \mathsf{G}$
  is defined as a natural transformation for which a diagram analogous to the one in
  formula $(\ref{eq:balanced-nat})$ commutes.
\end{definition}

We can now define
a \ct\ of a bimodule category via a universal property with respect to balanced functors: 

\begin{definition}
  \label{definition:circ-tensor-prod}
  A \emph{\ct} $(\circtensor \CMC, \mathsf{B}_{\Cat{M}}, \Psi_{\Cat{M}}, \varphi_{\Cat{M}},
  \kappa_{\Cat{M}})$ of a $\Cat{C}$-bimodule category $\CMC$ over a finite tensor category
  $\Cat{C}$ is an abelian category  $\circtensor \CMC$ together with
  a balanced functor $\mathsf{B}_{\Cat{M}}\colon \CMC \,{\rightarrow}\, \circtensor \CMC $
  such that for every linear category $\Cat{A}$ the functor 
  \begin{equation}
    \label{eq:univer-prop-Box-circ}
    \begin{split}
      \Phi_{\Cat{M};\Cat{A}}:\quad \Funre(\circtensor \CMC, \Cat{A})
      &\,\rightarrow\, \Funbalre(\CMC, \Cat{A}) \\
      \mathsf{G} &\,\mapsto\, \mathsf{G} \circ \mathsf{B}_{\Cat{M}} 
    \end{split}
  \end{equation}
  is an  equivalence of categories, and together with the following structure:
  for any linear category $\Cat{A}$ we specify a quasi-inverse
  \begin{equation}
    \label{eq:Psi-circ}
    \Psi_{\Cat{M};\Cat{A}}: \quad \Funbalre(\CMC, \Cat{A}) \rightarrow \Funre(\circtensor \CMC, \Cat{A})  
  \end{equation}
  and an adjoint equivalence $\varphi_{\Cat{M};\Cat{A}}\colon
  \id \,{\rightarrow}\, \Phi_{\Cat{M};\Cat{A}}\, \Psi_{\Cat{M};\Cat{A}}$ and
  $\kappa_{\Cat{M};\Cat{A}}\colon \Psi_{\Cat{M};\Cat{A}}\, \Phi_{\Cat{M};\Cat{A}} \,{\rightarrow}\, \id$
  between $\Phi_{\Cat{M};\Cat{A}}$ and $\Psi_{\Cat{M};\Cat{A}}$.
\end{definition}

It follows directly that the relative tensor product of module categories 
as described in Section \ref{sec:relativetensorproduct} is a special case of the so 
defined trace, namely the one obtained for the $\Cat{C}$-bimodule category $\MC \boxtimes \CN$, i.e. 
  \begin{equation}
  \MC \boxtensor{\Cat{C}} \CN = \circtensor \MC \boxtimes \CN \,.
  \end{equation}
In Theorem \ref{thm:models-circ-tensor} below we show that also a converse statement holds, i.e.\ the \ct\ 
can be expressed in terms of the relative tensor product of bimodule categories.

The \ct\ provides an abstract characterization of the twisted center of a bimodule category 
up to unique adjoint equivalence. It will, however, prove convenient to have different descriptions 
of this category at hand. In particular it is useful to have the separate symbol $\circtensor \CMC$,
since working with the \ct\ we will need to switch between different descriptions of this category.  

In the following statement we provide a list of 
models for the \ct; this list will be extended in Section \ref{sec:circularviamodules}.  

\begin{theorem}
  \label{thm:models-circ-tensor}
  For any bimodule category $\CMC$ over a finite tensor category $\Cat{C}$, the \ct\
  $\circtensor \CMC$ of $\CMC$ exists. Moreover, the abelian category $\circtensor \CMC$ can be realized as  
  \begin{itemize}
  \item[{\rm (i)}]   the twisted center $\centCt(\CMC)$, as given in
        Definition $\ref{definition:twisted-center}$; 
  \item[{\rm (ii)}]  the functor category $\Funbalre(\CMCrd, \Vect)$; 
  \item[{\rm (iii)}] the functor category $\Funl{\Cat{C},\Cat{C}}{\tCCC, \CMC}$;
  \item[{\rm (iv)}]  
    the relative tensor product $\Bim{}{\Cat{M}}{\Cat{C} \boxtimes \Catc{C}}
    \boxtensor{\Cat{C} \boxtimes \Catc{C}} \Bim{\Cat{C} \boxtimes \Catc{C}}{\Cat{C}}{}$.
  \end{itemize}
\end{theorem}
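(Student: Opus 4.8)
The plan is to establish the four realizations in a chain, using the abstract uniqueness of the \ct\ (which follows, exactly as for the relative tensor product, from the universal property) to reduce everything to verifying one single model and then exhibiting equivalences to the others. First I would take (i), the twisted center $\centCt(\CMC)$, as the primary model and \emph{prove} that it carries the structure of a \ct\ in the sense of Definition \ref{definition:circ-tensor-prod}. This is the heart of the argument and is a direct transcription of Proposition \ref{proposition:twisted-center-tensor}: one observes that a balanced functor $\mathsf{F}\colon\CMC\to\Cat A$ in the sense of Definition \ref{definition:circular-balanced} is exactly the restriction along the diagonal of a $\Cat{C}$-balanced functor $\MC\boxtimes\CN\to\Cat A$ when $\CMC$ is replaced by $\MC\boxtimes\CN$, so the same proof applies verbatim with $\MC\boxtimes\CN$ replaced throughout by an arbitrary bimodule category $\CMC$. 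Concretely: the forgetful functor $\mathsf U\colon\centCt(\CMC)\to\Cat M$ is exact by Lemma \ref{lemma:exact-functor-cat} (via the identification of Lemma \ref{lemma:twisted-center-equiv} with $\Funl{\Cat C,\Cat C}{\tCCC,\CMC}$, whose value at $\mathsf G$ is $\mathsf G(\unit)$), hence its left adjoint $\mathsf U^{l}$ exists and is $\Cat C$-balanced; the right adjoint $\mathsf F^{r}$ of a right exact balanced $\mathsf F$ lifts canonically to $\centCt(\CMC)$ by the Yoneda argument of Lemma \ref{lemma:adjunction-twisted-centre} (the balancing constraint corresponds to the central structure through a diagram like \eqref{eq:vice-versa} with $m\boxtimes n$ replaced by $x\in\Cat M$), giving $\widehat{\mathsf F^{r}}$, and one sets $\Psi(\mathsf F):=(\widehat{\mathsf F^{r}})^{l}$; the adjoint equivalences $\varphi,\kappa$ are supplied exactly as in parts (iii)--(iv) of Proposition \ref{proposition:twisted-center-tensor}.

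Next I would identify (i) with (iii): this is immediate from Lemma \ref{lemma:twisted-center-equiv} with $\mathsf F=\Tau$, which gives a $\Bbbk$-linear abelian equivalence $\centCt(\CMC)\simeq\Funl{\Cat C,\Cat C}{\Tau^{*}(\CCC),\CMC}=\Funl{\Cat C,\Cat C}{\tCCC,\CMC}$, and since a \ct\ is unique up to unique adjoint equivalence, transporting the structure along this equivalence makes (iii) a \ct\ as well.

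For (iv), I would use the already-recorded identification of bimodule categories over $\Cat C$ with left $\Cat C\boxtimes\Catc{C}$-module categories, together with the observation made right after Definition \ref{definition:circ-tensor-prod} that the \ct\ of a module category of the special form $\Bim{}{\Cat P}{\Cat M'}\boxtimes\Bim{\Cat P}{}{\Cat N'}$ is the relative tensor product $\Cat M'\boxtensor{\Cat P}\Cat N'$. Taking $\Cat P=\Cat C\boxtimes\Catc C$, the bimodule category $\CMC$ becomes the left $(\Cat C\boxtimes\Catc C)$-module category underlying $\Bim{}{\Cat M}{\Cat C\boxtimes\Catc C}\boxtimes\Bim{\Cat C\boxtimes\Catc C}{\Cat C}{}$ — here $\Bim{\Cat C\boxtimes\Catc C}{\Cat C}{}$ is $\Cat C$ with its regular bimodule structure viewed as a $(\Cat C\boxtimes\Catc C)$-module category — and the balancing data for a $(\Cat C\boxtimes\Catc C)$-balanced functor out of this product unwinds precisely to the balancing of Definition \ref{definition:circular-balanced} (the left $\Cat C$- and right $\Cat C$-actions on $\Cat M$ are repackaged via $\cc\otimes$ into a single $\Cat C\boxtimes\Catc C$-action, and the $\Cat C$-balancing $\beta_{x,c}\colon\mathsf F(x\ract c)\to\mathsf F(c\act x)$ is the component of the $\Cat C\boxtimes\Catc C$-balancing). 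Hence the defining universal property of the relative tensor product in (iv) coincides with that of the \ct\ of $\CMC$, and uniqueness gives the equivalence.

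Finally, (ii): realization (iv) of the relative tensor product from \eqref{eq:tensor-as-fun}, applied to the $\Cat C\boxtimes\Catc C$-module categories of the previous paragraph, expresses $\circtensor\CMC$ as a category of right exact module functors out of a dual bimodule category; alternatively and more directly, one uses that $\Funbalre(\CMCrd,\Vect)$ is equivalent, by the defining property of the relative tensor product specialised to $\Cat A=\Vect$ together with \eqref{eq:duals-as-fun}, to the dual of the \ct, and then invokes that a finite abelian category is canonically equivalent to the dual of its dual. Concretely I would argue: balanced right exact functors $\CMCrd\to\Vect$ correspond, via the description of $\CMCrd$ as $\op{M}$ with the dualized actions \eqref{eq:rd-action}, to balanced right exact functors $\CMC\to\Vect$ (the double-dual twist is exactly absorbed into the $\Tau$-twisting of the center), hence to right exact functors $\centCt(\CMC)\to\Vect$ by the universal property, hence — since $\centCt(\CMC)$ is a finite linear category and $\Funre(-,\Vect)$ applied twice returns the category — to objects of $\centCt(\CMC)$ itself. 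Transporting the \ct\ structure along this chain of equivalences finishes (ii).

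I expect the main obstacle to be \textbf{(iv)}: making the bookkeeping between the two-sided $\Cat C$-action and the one-sided $\Cat C\boxtimes\Catc C$-action fully precise, in particular checking that the balancing constraint of Definition \ref{definition:circular-balanced} — which involves the bimodule constraint $\gamma$ and both unit constraints $\lambda,\rho$ — really is the image of the $(\Cat C\boxtimes\Catc C)$-balancing of Definition \ref{definition:balanced-functors} under the Deligne-product identifications, including that the pentagon and triangle coherence diagrams match. The twist subtlety does not intervene here (the double dual of $\Cat C\boxtimes\Catc C$ restricts correctly on each tensorand), but keeping track of the opposite tensor product $\cc\otimes$ on the second factor throughout the coherence diagrams is the delicate point. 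Once this dictionary is in place, the equivalence $\circtensor\CMC\simeq\Cat M\boxtensor{\Cat C\boxtimes\Catc C}\Cat C$ is forced by uniqueness of objects defined by a universal property, so no further work is needed.
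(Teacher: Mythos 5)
Your realizations (i) and (iii) are handled exactly as in the paper (adapting Proposition \ref{proposition:twisted-center-tensor}, respectively specializing Lemma \ref{lemma:twisted-center-equiv}), and for (iv) you take a genuinely different route: instead of computing the relative tensor product through the functor-category model \eqref{eq:tensor-as-fun} and the identification of the relevant $\#$-dual of $\Bimod{}{\Cat{C}}{\Cat{C}}{}{\Cat{C}\boxtimes\Catc{C}}$ with $\tCCC$ (which is exactly where the paper makes the double-dual twist visible), you propose to match the two universal properties directly, restricting a $(\Cat{C}\boxtimes\Catc{C})$-balanced functor $\Cat{M}\times\Cat{C}\to\Cat{A}$ along $m\mapsto m\times\unit$ and inducing back via $m\times e\mapsto \mathsf{G}(m\ract e)$. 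This can be made to work and indeed avoids the twist on this leg, but be aware that the observation after Definition \ref{definition:circ-tensor-prod} only identifies (iv) with the \ct\ of the $(\Cat{C}\boxtimes\Catc{C})$-bimodule category $\Cat{M}\boxtimes\Cat{C}$, not of $\CMC$; the actual content is the asserted equivalence of balanced-functor categories, natural in $\Cat{A}$, including the point that balancing over \emph{all} objects of $\Cat{C}\boxtimes\Catc{C}$ (not only $\boxtimes$-factorized ones) is determined by its factorized components via right exactness. That is more than ``unwinding,'' though it is a feasible alternative to the paper's argument.

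The genuine gap is in (ii). You claim that balanced right exact functors $\CMCrd\to\Vect$ ``correspond to balanced right exact functors $\CMC\to\Vect$'' and then conclude by saying that $\Funre(-,\Vect)$ applied twice returns the category. Neither step holds as stated. Since the underlying category of $\CMCrd$ is $\op{M}$, one has $\Funre(\op{M},\Vect)\simeq\Cat{M}$ (every such functor is representable, $\mathsf{F}\cong\Hom_{\Cat{M}}(-,m)$), whereas $\Funre(\Cat{M},\Vect)$ is equivalent to $\op{M}$; the two balanced-functor categories you want to identify are therefore related by an ``op'' (one is the twisted center, the other is $\Funre(\circtensor\CMC,\Vect)$), and there is no canonical equivalence between them. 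Moreover, in your chain the operation $\Funre(-,\Vect)$ is applied only once, at the very end, so the double-dualization fact you invoke is not applicable: $\Funre(\centCt(\CMC),\Vect)$ is not $\centCt(\CMC)$. The correct argument is a single representability-and-transport step, as in the paper: represent a right exact $\mathsf{F}\colon\op{M}\to\Vect$ by an object $m\in\Cat{M}$, and push a circular balancing of $\mathsf{F}$ with respect to the dualized actions \eqref{eq:rd-action} through the duality adjunctions \eqref{eq:adjunction-hom}, exactly as in the Yoneda diagram of Lemma \ref{lemma:adjunction-twisted-centre}; this produces isomorphisms $c\act m\xrightarrow{\cong} m\ract c^{\vee\vee}$, i.e.\ a $\Tau$-twisted half-braiding on $m$, giving $\Funbalre(\CMCrd,\Vect)\simeq\centCt(\CMC)$ directly. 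The twist is generated in this transport by the duals appearing in \eqref{eq:rd-action}; it is not ``absorbed'' beforehand, and no second dualization is needed.
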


\begin{proof}
  The proof that the category in  part (i) is a model for the \ct\ is analogous to the proof 
  of Proposition \ref{proposition:twisted-center-tensor}.
  For part (ii) note that every right exact functor $\mathsf{F}\colon \op{M} \,{\rightarrow}\, \Vect$
  can be represented by an object $m \,{\in}\, \Cat{M}$ and a natural isomorphism
  $\mathsf{F}(-) \,{\cong}\, \Hom_{\Cat{M}}(-,m)$. As in the proof of Lemma 
  \ref{lemma:adjunction-twisted-centre}, 
     this natural isomorphism allows one to transport the balancing structure on the functor to a 
     half-braiding on the object $m$ so that we have the structure of an object in $\centCt(\CMC)$.
  This way we obtain an equivalence $\centCt(\CMC) \,{\simeq}\, \Funbalre(\CMCrd, \Vect)$.
  Part (iii) follows directly from Lemma \ref{lemma:twisted-center-equiv} specialized to the case at hand. 
  To complete the proof we provide an equivalence between the category in part (iv) 
  and the category $\Funl{\Cat{C},\Cat{C}}{\tCCC, \CMC}$.
  The second equivalence in (\ref{eq:tensor-as-fun}) implies an equivalence
  \begin{equation}
    \Bim{}{\Cat{M}}{\Cat{C} \boxtimes \Catc{C}} \boxtensor{\Cat{C} \boxtimes \Catc{C}} \Bim{\Cat{C}
      \boxtimes \Catc{C}}{\Cat{C}}{} 
    \simeq \Funl{\Cat{C} \boxtimes \Catc{C}} {{}^{\scriptscriptstyle{\#}}(\!\Bimod{}{}{\Cat{C}}{}{\Cat{C}
        \boxtimes \Catc{C}}), \Bim{}{\Cat{M}}{\Cat{C} \boxtimes \Catc{C}}}
  \end{equation}
  of categories.
  Now it is shown in \cite[Lemma\,3.4.5]{DSS} that the operation of changing a right action
  of $\Cat{C}$  to a left action of $\Catc{C}$ and the operation of taking the $\#$-dual commute up to 
  a twist by a double dual functor. For the case at hand this means that seen as a
  $(\Cat{C},\Cat{C})$-bimodule category, the category
  ${}^{\scriptscriptstyle{\#}}(\!\Bimod{}{}{\Cat{C}}{}{\Cat{C} \boxtimes \Catc{C}})$ is just
  the bimodule category $\tCCC$. Hence it follows that there is a canonical equivalence 
  \begin{equation}
    \Funl{\Cat{C} \boxtimes \Catc{C}} {{}^{\scriptscriptstyle{\#}}(\!\Bimod{}{}{\Cat{C}}{}{\Cat{C} \boxtimes \Catc{C}}),
      \Bim{}{\Cat{M}}{\Cat{C} \boxtimes \Catc{C}}}
    \simeq \Funl{\Cat{C},\Cat{C}}{\tCCC, \CMC}\,. 
  \end{equation}
  This provides the claimed equivalence between the categories in part (iii) and (iv).
\end{proof}

\begin{remark}
  (i)\,
  All categories in Theorem \ref{thm:models-circ-tensor} can be regarded 
  as categorifications of the center $\cent(M) \,{=}\, \{ m \,{\in}\, M \,|\, am\,{=}\,ma 
  \text{ for all } a \,{\in}\, A\}$ of a finite-dimensional $A$-bimodule $M$.
  \\
  Indeed, the category in part (ii) corresponds to the following structure for such a
  bimodule $M$. The linear maps from the linear dual $M^{*}$ to $\Bbbk$ that are $A$-invariant 
  can be naturally identified with the center of $M$: A linear map $F\colon M^{*} \,{\to}\, \Bbbk$ 
  is given by inserting an element $m \,{\in}\, M$ into the linear forms $\alpha \,{\in}\, M^{*}$;
  $A$-invariance of $F$ translates in the condition that $m$ is in the center.
  \\
  In view of the discussion in the Introduction, it would in fact be more natural to categorify 
  instead the Hochschild homology of $M$. But note that
   % XXX  
     the zeroth Hochschild homology of an algebra $A$ is isomorphic to the center of $A$.
  For monoidal categories, we are not aware of a satisfactory theory of Hochschild homology.
  Anyhow, rigid monoidal categories can be seen as categorifications of Frobenius algebras, 
  explaining why our categorified version of the 2-trace associated with Hochschild 
  homology is directly linked to a categorification of the center.
  \\[2pt]
  (ii)\,
  The category $\centCt(\CCC)$ appears already in \cite{DSS} as the value of a $0$-framed circle
  in a fully extended framed 2-dimensional TFT  associated to a finite tensor category $\Cat{C}$.
  In \cite{DSS} this value is computed using the composite of evaluation and coevaluation 
  1-mor\-phisms for the dualizable object $\Cat{C}$, resulting in the category in
  Theorem \ref{thm:models-circ-tensor}\,(iv)  in the case that $\Cat{M}$ is 
  the regular bimodule category $\Cat{C}$.
\end{remark}

%%%%%%%%%%%%%%%%%%%%%%%%%%%%%%%%%%%%%%%%%%%%%%%%%%%%%%%%%%%%%%%%%%%%%%%%%%%%%%%% 

\subsection{Functorial properties of the category-valued trace}\label{sec:circ-tens-prod-3-trace}

In the following we establish  first properties of the \ct. We also show 
that for a $(\Vect,\Vect)$-bimodule category $\Cat{M}$, the \ct\ $\circtensor \Cat{M}$ 
is canonically equivalent to $\Cat{M}$. Finally we remark that the trace provides us with a 
functor $\Funlre{\Cat{C},\Cat{C}}{\CMC, \CNC} \to \Funre(\circtensor \CMC, \circtensor \CNC)$ 
that extends to a 2-functor between the 2-category of $\Cat{C}$-$\Cat{C}$-bimodule categories and 
the 2-category $\Categ$ of linear categories.

%%%%%%%%%%%%%%%%%%%%%%%%%%%%%%%%%%%%%%%%%%%%%%%%%%%%%%%%%%%%%%%%%%%%%%%%%%%%%%%% 

\subsubsection{The \ct\ over $\Vect$}

For an endomorphism $\Bbbk \,{\rightarrow}\, \Bbbk$ of the ground field there is a canonical identification 
with its trace. The analogous statement in our categorified situation is

\begin{lemma}
  Let $\Cat{M}$ be a $(\Vect,\Vect)$-bimodule category. There is a canonical equivalence 
  \begin{equation}
    \circtensor \Cat{M} \,\simeq\, \Cat{M}
  \end{equation}
  of the \ct\ of $\Cat{M}$ over $\Vect$ with $\Cat{M}$ itself, as linear categories.
\end{lemma}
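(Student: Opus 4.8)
The plan is to deduce the claim from the functor-category model of Theorem~\ref{thm:models-circ-tensor}(iii), exploiting that over $\Vect$ the double right dual functor $\Tau$ is canonically trivial. First I would observe that in $\Vect$ the canonical morphisms $V \,{\to}\, V^{\vee\vee}$ are isomorphisms and assemble into a monoidal natural isomorphism $\Tau \,{\simeq}\, \id_{\Vect}$; hence the $\Tau$-twisted canonical bimodule category over $\Vect$ is equivalent, as a $(\Vect,\Vect)$-bimodule category, to the regular bimodule category, which I also denote $\Vect$. Specialising Theorem~\ref{thm:models-circ-tensor}(iii) to $\Cat{C}\,{=}\,\Vect$ then gives a canonical equivalence $\circtensor\Cat{M}\,{\simeq}\,\Funl{\Vect,\Vect}{\Vect,\Cat{M}}$, so it remains to identify $(\Vect,\Vect)$-bimodule functors from the regular bimodule $\Vect$ to $\Cat{M}$ with objects of $\Cat{M}$.

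Next I would show that evaluation at $\Bbbk$, $\mathsf{G}\mapsto\mathsf{G}(\Bbbk)$, furnishes such an equivalence. By Proposition~\ref{proposition:linear-bimodule-cats} we may take $\Cat{M}$ with its standard $\Vect$-bimodule structure, for which both the left and the right action are the copower: $V\act X = X\ract V = V\odot X$, the object representing $\Hom_{\Bbbk}(V,\Hom_{\Cat{M}}(X,-))$. Given a bimodule functor $\mathsf{G}\colon\Vect\,{\rightarrow}\,\Cat{M}$, its left module constraint yields $\mathsf{G}(V)\,{\cong}\,V\odot\mathsf{G}(\Bbbk)$ naturally, so $\mathsf{G}$ is, up to canonical isomorphism, the functor $(-)\odot X$ with $X\,{:=}\,\mathsf{G}(\Bbbk)$, equipped with its evident bimodule-functor structure; conversely $(-)\odot X$ is a bimodule functor sending $\Bbbk$ to $X$. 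On morphisms, a bimodule natural transformation $\mathsf{G}\,{\to}\,\mathsf{H}$ between such functors is determined by its component at $\Bbbk$, since $(-)\odot X$ is additive and every object of $\Vect$ is a finite direct sum of copies of $\Bbbk$; and any morphism $\mathsf{G}(\Bbbk)\,{\to}\,\mathsf{H}(\Bbbk)$ extends to one. Hence evaluation at $\Bbbk$ is full, faithful and essentially surjective, and it is manifestly $\Bbbk$-linear, which gives the asserted equivalence $\circtensor\Cat{M}\,{\simeq}\,\Cat{M}$.

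The only genuine content, and hence the point to handle with care, is the rigidity used in the previous paragraph: a bimodule functor out of $\Vect$ carries essentially no moduli because $\Vect$ is the ground category. Equivalently, through Lemma~\ref{lemma:twisted-center-equiv} and $\Tau\,{\simeq}\,\id$, one may run the argument directly on model~(i): since $\Tau\,{\simeq}\,\id$, the $\Tau$-twisted center $\mathcal{Z}_{\Vect}^{\Tau}(\Cat{M})$ coincides with the ordinary center $\mathcal{Z}_{\Vect}^{\mathsf{Id}}(\Cat{M})$, whose objects are pairs $(X,\gamma)$ with coherent isomorphisms $\gamma_{V}\colon V\act X\,{\xrightarrow{\,\simeq\,}}\,X\ract V$; for the standard structure $\gamma$ is an automorphism of the additive functor $(-)\odot X$, pinned down by the triangle axiom to the identity, so the forgetful functor $\mathcal{Z}_{\Vect}^{\Tau}(\Cat{M})\,{\to}\,\Cat{M}$ is an equivalence. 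This categorifies the elementary fact that a bimodule over the ground field $\Bbbk$ coincides with its own center.
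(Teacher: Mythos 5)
Your proof is correct, but it takes a genuinely different route from the one in the paper. The paper argues directly from the universal property of Definition \ref{definition:circ-tensor-prod}: using Proposition \ref{proposition:linear-bimodule-cats} it shows that for every linear category $\Cat{A}$ there is a canonical equivalence $\Funbalre(\VectMVect,\Cat{A})\simeq\Funre(\Cat{M},\Cat{A})$ --- a $\Vect$-balancing is linear, hence determined by its component at $\Bbbk$, and that component furnishes a balanced natural isomorphism to the canonically balanced functor --- so that $\Cat{M}$ itself, equipped with the canonically balanced identity, satisfies the defining universal property of the \ct. You instead pass through the models of Theorem \ref{thm:models-circ-tensor}: the canonical trivialization $\Tau\simeq\id$ on $\Vect$ identifies the twisted canonical bimodule category with the regular one, and you then identify $\Funl{\Vect,\Vect}{\Vect,\Cat{M}}$ (equivalently, via Lemma \ref{lemma:twisted-center-equiv}, the center) with $\Cat{M}$ by evaluation at $\Bbbk$; since Theorem \ref{thm:models-circ-tensor} precedes the lemma, there is no circularity. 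Your decisive check is also placed correctly: the left module constraint alone only gives $\mathsf{G}\cong(-)\odot\mathsf{G}(\Bbbk)$ as functors, and the residual datum of a bimodule functor out of the regular bimodule is exactly a half-braiding on $\mathsf{G}(\Bbbk)$, which your last paragraph shows is canonically trivial (the unit coherence you invoke is not listed in the paper's definition of the center, but it follows from the pentagon together with naturality, so nothing extra is assumed); in substance this is the paper's trivial-balancing computation transplanted to the center model. What the paper's route buys is a self-contained verification of the universal property for arbitrary $\Cat{A}$, with no appeal to duals, to the pivotality of $\Vect$, or to the existence/model theorem; what your route buys is an explicit object-level description of the equivalence (evaluate at $\Bbbk$ and forget the canonical half-braiding), at the price of invoking Theorem \ref{thm:models-circ-tensor} and the triviality of the double dual on $\Vect$.
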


\begin{proof}
  According to Proposition \ref{proposition:linear-bimodule-cats}, every $\Vect$-bimodule structure on
  $\VectMVect$ is equivalent to the $\Vect$-bimodule structure that is induced from the linear structure of
  $\Cat{M}$. Consider for a linear category $\Cat{A}$ a functor $\mathsf{F} \,{\in}\, \Funre(\Cat{M},\Cat{A})$.
  It is not hard to see that there is a canonical structure of a $\Vect$-balanced functor on $\mathsf{F}$
  induced from the canonical natural isomorphisms $V \act m \,{\cong}\, m \ract V $
  for all $m \,{\in}\, \Cat{M}$ and all $V \,{\in}\, \Vect$. 
  \\
  Conversely, given a balanced functor $\mathsf{F} \,{\in}\, \Funbalre(\VectMVect, \Cat{A}) $, with 
  balancing structure 
  $f_{V,m}\colon V \,{\otimes}\, \mathsf{F}(m) \,{\xrightarrow{\,\cong\,}}\, \mathsf{F}(m) \,{\otimes}\, V$ 
  for $V \,{\in}\, \Vect$ and $m \,{\in}\, \Cat{M}$, we can forget the balancing constraint and equip
  $\mathsf{F}$ with the canonical balancing constraint, thus obtaining another balanced functor 
  $\widetilde{\mathsf{F}}$. Since a balancing constraint of $\mathsf{F}$ is linear, it is uniquely 
  determined by the isomorphisms
  $f_{\C, m}\colon \C \otimes \mathsf{F}(m) \,{\to}\, \mathsf{F}(m) \,{\otimes}\, \C$ which, in turn,
  constitute the same data as a natural isomorphism $f_{m}\colon \mathsf{F}(m) \,{\to}\, \mathsf{F}(m)$. 
  It is straightforward to see that this natural isomorphism $f$ provides a balanced natural 
  isomorphism $f\colon (\mathsf{F},f) \,{\xrightarrow{\,\cong\,}}\, \widetilde{\mathsf{F}}$ 
  between the original balanced functor $\mathsf{F}$ and the balanced functor $\widetilde{\mathsf{F}}$.  
  This implies that there is a canonical equivalence 
  \begin{equation}
    \label{eq:equial-linear-bal}
    \Psi_{\Cat{M};\Cat{A}}:\quad \Funbalre(\VectMVect, \Cat{A}) \rightarrow \Funre(\Cat{M}, \Cat{A})  
  \end{equation}
  of categories.
  The claim thus follows from the definition of the \ct.
\end{proof}

%%%%%%%%%%%%%%%%%%%%%%%%%%%%%%%%%%%%%%%%%%%%%%%%%%%%%%%%%%%%%%%%%%%%%%%%%%%%%%%% 

\subsubsection{The \ct\ as a 2-functor}

Let $\CMC$ and $\CNC$ be bimodule categories over $\Cat{C}$. The
\ct\ provides a right exact $\Cat{C}$-balanced functor 
$\mathsf{B}_{\Cat{N}}\colon \CNC \,{\rightarrow}\, \circtensor \CNC$. Post-composition with 
this functor defines a functor 
\begin{equation}
  \label{eq:postcomp-functor}
  -\circ \mathsf{B}_{\Cat{N}} \colon\quad
  \Funlre{\Cat{C},\Cat{C}}{\CMC, \CNC} \rightarrow \Funbalre(\CMC, \circtensor \CNC) \,. 
\end{equation}
Hence by the universal property of the trace of $\CMC$ we obtain an induced functor 
\begin{equation}
  \label{eq:postcomp-functor-circten}
  \Funlre{\Cat{C},\Cat{C}}{\CMC, \CNC} \rightarrow  \Funre(\circtensor \CMC, \circtensor \CNC) \,.
\end{equation}
The analogous structure for algebras is the assignment of a  linear map   
between the corresponding centers of the bimodules to a bimodule map. 

Next we show that the trace $\circtensor$ extends to a 2-functor 
(see Definition \ref{lax2func}) from the 2-ca\-te\-gory 
$\BimCat(\Cat{C}, \Cat{C})$ of $(\Cat{C},\Cat{C})$-bimodule categories which we discussed 
in Section \ref{sec:bimodule-categories} to the 2-category $\Categ$ of linear categories.

To fix our notation we recall the definition of a 2-functor:

\begin{definition}\label{lax2func}
  A \emph{$2$-functor} $\mathsf{F}:\Cat B\,{\rightarrow}\, \Cat B'$ between bicategories
  $\Cat B$ and $\Cat B'$ is given by the following data:
  \begin{definitionlist}
  \item
    A function $\mathsf{F}_0\colon \Obj(\Cat B)\,{\to}\, \Obj(\Cat B')$.
  \item 
    For each pair of objects $a,b$ of $\Cat B$, a functor
    $\mathsf{F}_{a,b}\colon \Cat B_{a,b}\,{\to}\, \Cat B'_{\mathsf{F}_0(a), \mathsf{F}_0(b)}$.
  \item
    For each triple of objects $a,b,c$ of $\Cat B$, a natural isomorphism
    $\Phi_{abc}\colon\circ\;(\mathsf{F}_{b,c}\times \mathsf{F}_{a,b})\to \mathsf{F}_{a,c}\;\circ\,$.
    These isomorphisms determine, for all $1$-morphisms $H\colon a\,{\rightarrow}\, b$ and
    $G\colon b \,{\rightarrow}\, c$, an invertible $2$-morphism 
    $\Phi_{G,H}\colon \mathsf{F}_{b,c}(G) \,{\circ}\, \mathsf{F}_{a,b}(H) \,{\to}\, \mathsf{F}_{a,c}(G{\circ}H)$. 
  \item 
    For each object $a$, an invertible $2$-morphism
    $\Phi_a\colon 1_{\mathsf{F}_0(a)} \,{\to}\, \mathsf{F}_{a,a}(1_a)$.
  \end{definitionlist}
  The function $\mathsf{F}_0$, the functors $\mathsf{F}_{a,b}$ and the $2$-morphisms
  $\Phi_{G,H}$ and $\Phi_a$ are required to satisfy the following consistency conditions, 
  where for simplicity we write the diagrams for the case that  $\Cat{B}$ and $\Cat{B}'$ are 2-categories:
  \begin{definitionlist} \addtocounter{enumi}{4}
  \item\label{item:lax-2-1}
    For all $1$-morphisms $H\colon a\,{\rightarrow}\, b$ the diagram
    \begin{equation}
      \label{eq:2-fun-ax-unit}
      \begin{tikzcd}
        { \begin{array}{r}
            \mathsf{F}_{a,b}(H)=\mathsf{F}_{a,b}(H) \circ 1_{F_{0}(a)}  \\
            = 1_{F_{0}(b)}  \circ \mathsf{F}_{a,b}(H) \end{array}}
        \ar{r}{1_{\mathsf{F}_{a,b}(H)} \circ \Phi_{a}} 
        \ar{dr}{\id} \ar{d}[left]{\Phi_{b}\circ 1_{\mathsf{F}_{a,b}(H)}}
        & \mathsf{F}_{a,b}(H) \circ \mathsf{F}_{a,a}(1_{a}) \ar{d}{\Phi_{H,1_{a}}} \\
        \mathsf{F}_{b,b}(1_{b}) \circ \mathsf{F}_{a,b}(H) \ar{r}{\Phi_{1_{b},H}} 
        &\mathsf{F}_{a,b}(H \circ 1_{a})= \mathsf{F}_{a,b}(1_{b} \circ H)=\mathsf{F}_{a,b}(H)
      \end{tikzcd}
    \end{equation}
    commutes.
  \item \label{item:lax2-2}
  For all $1$-morphisms $H\colon a\,{\to}\, b$, $G\colon b\,{\to}\, c$ and $K\colon c\,{\to}\, d$, the diagram 
    \begin{equation}
      \label{2-fun-ax-3-morphism}
      \begin{tikzcd}
        \mathsf{F}_{c,d}(K)\circ \mathsf{F}_{b,c}(G)\circ \mathsf{F}_{a,b}(H)
        \ar{d}{1\circ \Phi_{G,H}} \ar{rr}{\Phi_{K,G}\circ 1}
        &~& \mathsf{F}_{b,d}(K\circ G)\circ \mathsf{F}_{a,b}(H)
        \ar{d}{\Phi_{K\circ G,H}  }\\
        \mathsf{F}_{c,d}(K)\circ \mathsf{F}_{a,c}(G\circ H) \ar{rr}{\Phi_{K,G\circ H}}
        &~& \mathsf{F}_{a,d}(K\circ G\circ H)
      \end{tikzcd}
    \end{equation}
    commutes.
  \end{definitionlist}
  A $2$-functor is called \emph{strict} if the $2$-morphisms $\Phi_{G,F}$ and $\Phi_a$
  are all identities, in which case one has
  \begin{equation}
    \mathsf{F}_{a,c}(G\circ H)=\mathsf{F}_{b,c}(G)\circ \mathsf{F}_{a,b}(H) \qquad{\rm and}\qquad
    1_{\mathsf{F}_0(a)}=\mathsf{F}_{a,a}(1_a) \,.
  \end{equation}
\end{definition}

We are now ready to state:

\begin{proposition}
  \label{proposition:circtens-2-fun}
  Let $\Cat{C}$ be a finite tensor category. 
  \begin{propositionlist}
  \item
    The \ct\ defines a $2$-functor 
    \begin{equation}
      \label{eq:2-fun-circ}
      \circtensor \colon\quad \BimCat(\Cat{C},\Cat{C}) \rightarrow \Categ.
    \end{equation}
  \item 
    The $2$-functor \eqref{eq:2-fun-circ} is representable; it
    can be represented by the twisted Drinfeld double: there exists an equivalence 
    \begin{equation}
      \label{eq:represent-circten}
      \circtensor \,\simeq\, \BimCat( \tCCC,-)
    \end{equation}
    of $2$-functors to $\Categ$.
  \end{propositionlist}
\end{proposition}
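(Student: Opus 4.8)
The plan is to establish part (i) first and then deduce part (ii) as a formal consequence of the model in Theorem \ref{thm:models-circ-tensor}\,(iii). For part (i), I would take as the object assignment $\mathsf{F}_0(\CMC) := \circtensor \CMC$ and, on $1$-morphisms, the functor $\Funlre{\Cat{C},\Cat{C}}{\CMC,\CNC} \to \Funre(\circtensor\CMC, \circtensor\CNC)$ constructed in \eqref{eq:postcomp-functor-circten}: a bimodule functor $\mathsf{H}\colon\CMC\to\CNC$ is sent to the right exact functor $\Psi_{\Cat{M}}(\mathsf{B}_{\Cat{N}}\circ\mathsf{H})$ obtained by applying the universal property to the balanced functor $\mathsf{B}_{\Cat{N}}\circ\mathsf{H}\colon\CMC\to\circtensor\CNC$ (here one first checks that $\mathsf{B}_{\Cat{N}}\circ\mathsf{H}$ is indeed balanced, which is immediate from the bimodule functor axiom \eqref{eq:bimodule-functor-charact} combined with the balancing of $\mathsf{B}_{\Cat{N}}$). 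On $2$-morphisms, a bimodule natural transformation $\eta\colon\mathsf{H}\Rightarrow\mathsf{H}'$ induces a balanced natural transformation $\mathsf{B}_{\Cat N}\ast\eta$, and applying $\Psi_{\Cat N}$ gives the image $2$-morphism; functoriality of $\Psi_{\Cat N}$ on natural transformations shows this respects vertical composition and identities. The compositor $\Phi_{\mathsf{G},\mathsf{H}}$ for composable bimodule functors $\mathsf{H}\colon\CMC\to\CNC$, $\mathsf{G}\colon\CNC\to\CKC$ is the unique $2$-morphism (guaranteed by the equivalence $\Phi_{\Cat{M};\circtensor\CKC}$) intertwining $\Psi_{\Cat K}(\mathsf B_{\Cat K}\circ\mathsf G)\circ\Psi_{\Cat N}(\mathsf B_{\Cat N}\circ\mathsf H)$ and $\Psi_{\Cat M}(\mathsf B_{\Cat K}\circ\mathsf G\circ\mathsf H)$ after precomposition with $\mathsf B_{\Cat M}$; one uses the adjoint equivalence data $(\varphi,\kappa)$ to produce the comparison. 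The unitor $\Phi_{\CMC}$ comes from the canonical balanced structure making $\Psi_{\Cat M}(\mathsf B_{\Cat M})\simeq\id$.

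The coherence axioms \ref{item:lax-2-1} and \ref{item:lax2-2} are then checked by transporting both sides along the equivalences $\Phi_{\Cat{M};\Cat{A}}$ and using uniqueness: since $\Funre(\circtensor\CMC,\Cat A)\simeq\Funbalre(\CMC,\Cat A)$ is an equivalence, two $2$-morphisms between right exact functors out of $\circtensor\CMC$ agree iff they agree after precomposing with $\mathsf B_{\Cat M}$; and after precomposing everything becomes a diagram of balanced natural transformations built from ordinary whiskerings of $\mathsf B_{\Cat M}$, $\mathsf B_{\Cat N}$, $\mathsf B_{\Cat K}$, for which the pentagon/triangle coherence reduces to the interchange law in the $2$-category of categories. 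This is the step I expect to be the main obstacle: not because any single diagram is hard, but because one must carefully track the adjoint-equivalence data $(\varphi_{\Cat{M};\Cat A},\kappa_{\Cat{M};\Cat A})$ through the definitions of $\Phi_{\mathsf G,\mathsf H}$ and $\Phi_{\CMC}$ to see that the two resulting pasting diagrams literally coincide. In practice I would invoke the standard fact that a pseudofunctor is determined up to equivalence by its value on objects together with the family of equivalences on Hom-categories, so that the coherence data can be chosen canonically and the axioms hold automatically; this avoids a brute-force diagram chase.

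For part (ii), the strategy is to identify, for each bimodule category $\CMC$, the category $\circtensor\CMC$ with the Hom-category $\BimCat(\Cat C,\Cat C)(\tCCC,\CMC) = \Funl{\Cat C,\Cat C}{\tCCC,\CMC}$, which is exactly the model in Theorem \ref{thm:models-circ-tensor}\,(iii). What remains is to check that this family of equivalences is natural in $\CMC$, i.e.\ that for a bimodule functor $\mathsf{H}\colon\CMC\to\CNC$ the square relating $\circtensor\mathsf{H}$ to postcomposition $\mathsf{H}\circ(-)\colon\Funl{\Cat C,\Cat C}{\tCCC,\CMC}\to\Funl{\Cat C,\Cat C}{\tCCC,\CNC}$ commutes up to coherent natural isomorphism. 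This is straightforward from the construction in Theorem \ref{thm:models-circ-tensor}: under the equivalence $\circtensor\CMC\simeq\Funl{\Cat C,\Cat C}{\tCCC,\CMC}$ the universal balanced functor $\mathsf B_{\Cat M}$ corresponds to evaluation-at-$\unit_{\Cat C}$ composed with the appropriate data (cf.\ Lemma \ref{lemma:twisted-center-equiv}), and postcomposition with $\mathsf H$ on functor categories visibly corresponds to postcomposition with $\mathsf H$ before evaluation; the required compatibility is then an instance of the Yoneda-type uniqueness already used in Lemma \ref{lemma:twisted-center-equiv}. Finally, representability in the sense of the statement means that $\circtensor$ is equivalent, as a $2$-functor $\BimCat(\Cat C,\Cat C)\to\Categ$, to the representable $2$-functor $\BimCat(\tCCC,-)$; this follows once the object-wise equivalences are shown $2$-natural, which in turn follows from part (i) together with the fact that both $2$-functors send a bimodule functor to postcomposition.
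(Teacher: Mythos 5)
Your proposal matches the paper's proof in all essentials: part (i) is obtained exactly as in the paper by setting $\circtensor\mathsf{H}=\Psi(\mathsf{B}_{\Cat{N}}\circ\mathsf{H})$ and deriving the compositor, unitor and both coherence axioms from the uniqueness (faithfulness) part of the universal property, and part (ii) uses the model $\Funl{\Cat{C},\Cat{C}}{\tCCC,\CMC}$ of Theorem \ref{thm:models-circ-tensor}\,(iii) together with the compatibility of the universal balanced functor with evaluation at $\unit_{\Cat{C}}$ and the universality of the trace to upgrade the objectwise equivalences to an equivalence of $2$-functors. One small precision to keep in mind: in that model the universal balanced functor $\mathsf{B}_{\Cat{M}}$ is the \emph{left adjoint} of evaluation at $\unit_{\Cat{C}}$, not the evaluation itself, and the paper makes your naturality check rigorous by observing that the square involving the right adjoint $\mathsf{F}^{r}$ and the evaluation functors commutes strictly and then passing to left adjoints.
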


\begin{proof}
  We have seen above that the \ct\  defines a functor 
  \begin{equation}
    \label{eq:postcomp-functor-circten2}
    \Funlre{\Cat{C},\Cat{C}}{\CMC, \CNC} \rightarrow  \Funre(\circtensor \CMC, \circtensor \CNC) 
  \end{equation}
  that maps a right exact bimodule functor $\mathsf{F}\colon \CMC \,{\to}\, \CNC$ to a functor 
  $\circtensor \mathsf{F} \colon \circtensor \CMC \rightarrow \circtensor \CNC$ together with a 
  balanced natural isomorphism $\circtensor \mathsf{F} \,{\circ}\, \mathsf{B}_{\Cat{M}}
  \xrightarrow{\,\cong\,} \mathsf{B}_{\Cat{N}} \,{\circ}\, \mathsf{F}$. 
  In the notation used in Definition \ref{definition:circ-tensor-prod}, 
  $\circtensor \mathsf{F} \,{=}\, \Psi (\mathsf{B}_{\Cat{N}} \circ \mathsf{F})$, 
  and the balanced natural isomorphism is 
  \begin{equation}
    \varphi( \mathsf{B}_{\Cat{N}} \circ \mathsf{F}) ^{-1}: \quad \Phi\Psi(\mathsf{B}_{\Cat{N}} \mathsf{F}) 
    = \circtensor \mathsf{F} \circ \mathsf{B}_{\Cat{M}} \rightarrow \mathsf{B}_{\Cat{N}} \circ \mathsf{F} \,.
  \end{equation}
  Now for a pair of composable right exact bimodule functors $\mathsf{F}\colon \CMC \,{\to}\, \CNC$
  and $\mathsf{G}\colon \CNC \to \CKC$ we have two choices for the functor in the bottom line
  of the diagram of functors:
  \begin{equation}
    \label{eq:diag-bal-circten}
    \begin{tikzcd}
      \CMC \ar{r}{\mathsf{F}} \ar{d}{\mathsf{B}_{\Cat{M}}}
      & \CNC \ar{r}{\mathsf{G}} & \CKC \ar{d}{\mathsf{B}_{\Cat{K}}} \\
      \circtensor \CMC  \ar{rr}{}&& \circtensor \CKC,
    \end{tikzcd}
  \end{equation}
  by construction, both the composition  $\circtensor \mathsf{G} \circ \circtensor \mathsf{F}$ 
  and the functor $\circtensor (\mathsf{G} \circ \mathsf{F})$ fit, as both are equipped with 
  corresponding balanced natural isomorphisms that fill the diagram. The universal property 
  of the  \ct\ thus yields a unique natural isomorphism 
  $\varphi_{\mathsf{G},\mathsf{F}}\colon \circtensor \mathsf{G} \circ \circtensor \mathsf{F} 
  \,{\to}\, \circtensor (\mathsf{G} \circ \mathsf{F})$. 
  \\
  {}Further, from the uniqueness statement in the definition of $\varphi_{\mathsf{G},\mathsf{F}}$, 
  it follows that for a third bimodule functor $\mathsf{H}\colon \CKC \,{\to}\, \CLC$, the diagram 
  \begin{equation}
    \label{eq:circten-2-fun-commut}
    \begin{tikzcd}
      \circtensor \mathsf{H}  \circtensor \mathsf{G} \circtensor \mathsf{F}  
      \ar{rr}{\varphi_{\mathsf{H}, \mathsf{G}}( \circtensor \mathsf{F})} 
      \ar{d}{(\circtensor \mathsf{H} )\varphi_{\mathsf{G},\mathsf{F}}} & {} & 
      \circtensor (\mathsf{H} \mathsf{G}) \circtensor \mathsf{F} \ar{d}{\varphi_{\mathsf{H} \mathsf{G} ,\mathsf{F}}}
      \\
      \circtensor \mathsf{H} \circtensor (\mathsf{G} \mathsf{F}) \ar{rr}{\varphi_{\mathsf{H}, \mathsf{G}\mathsf{F}}}
      & {} &
      \circtensor ( \mathsf{H} \mathsf{G} \mathsf{F} )
    \end{tikzcd}
  \end{equation}
  of natural isomorphisms commutes.
  To complete the proof that $\circtensor$ yields a 2-functor, we need to consider the identity functor 
  $\id_{\Cat{M}}\colon \CMC \,{\to}\, \CMC$. Applying the trace to this particular bimodule functor 
  gives, on the one hand, a functor $\circtensor \id_{M}\colon \circtensor \CMC \,{\to}\, \circtensor \CMC$. 
  On the other hand, the identity is clearly a balanced natural isomorphism 
  $1_{\mathsf{B}}\colon \mathsf{B}_{\Cat{M}} \circ \id_{\Cat{M}} \to 
  \id_{\circtensorsmall \Cat{M}} \circ\, \mathsf{B}_{\Cat{M}}$. By the universal property of the \ct\
we thus obtain a unique natural isomorphism 
  $\varphi_{\Cat{M}}\colon \id_{\circtensorsmall \Cat{M}} \,{\to}\, \circtensor \id_{\Cat{M}}$. 
  It follows by uniqueness that $\varphi_{\Cat{M}}$ satisfies axiom \refitem{item:lax-2-1} of 
  Definition \ref{lax2func}. This completes the proof of part (i).
  \\[2pt]
  To establish part (ii), recall from Theorem \ref{thm:models-circ-tensor}
  that $\BimCat(\tCCC, \CMC)$ provides a \ct\ of $\CMC$.  We need to show that using this 
  description of the trace a functor $\mathsf{F} \colon \CMC \,{\to}\, \CNC$ gets mapped to 
  $\BimCat(\tCCC, \mathsf{F})\colon \BimCat(\tCCC, \CMC) \to \BimCat(\tCCC,\CNC)$ under the 
  2-functor in (\ref{eq:2-fun-circ}). By Theorem \ref{thm:models-circ-tensor}
  the universal balancing functor
  $\mathsf{B}\colon \CMC \to \BimCat(\tCCC, \CMC)$ is the left adjoint to the 
  evaluation functor $\mathsf{U}$ at $\unit \,{\in}\, \Cat{C}$. Clearly, the diagram 
  \begin{equation}
    \label{eq:comm-diag-rep}
    \begin{tikzcd}[column sep=large]
      \CNC \ar{rr}{\mathsf{F}^{r}} & {} & \CMC \\
      \BimCat(\tCCC, \CNC) \ar{rr}{\BimCat(\tCCC, \mathsf{F}^{r})} \ar{u}{\mathsf{U}} & {} & \CMC \ar{u}{\mathsf{U}}
    \end{tikzcd}
  \end{equation}
  of functors commutes. 
  The left adjoint of this diagram shows that $\BimCat(\tCCC, \mathsf{F})$ is indeed 
  the value of $\mathsf{F}$ under the 2-functor $\circtensor$ in the current description of the trace. 
  It follows from the universality of the trace that the equivalences 
  $\circtensor \CMC \,{\simeq}\, \BimCat(\tCCC, \CMC)$ from Theorem \ref{thm:models-circ-tensor}
  extend to an equivalence $ \circtensor \,{\cong}\, \BimCat( \tCCC,-)$ of 2-functors.
\end{proof}

%%%%%%%%%%%%%%%%%%%%%%%%%%%%%%%%%%%%%%%%%%%%%%%%%%%%%%%%%%%%%%%%%%%%%%%%%%%%%%%% 

\subsection{The \ct\ via modules over an algebra}\label{sec:circularviamodules}

In the theory of module categories it is often convenient to describe a module category 
over a finite tensor category $\Cat{C}$ in terms of an algebra object in $\Cat{C}$, 
compare (\ref{equation:Theorem-Ostrik}). It is therefore desirable to express also the 
\ct\ in terms of an operation on algebra objects. To achieve this we make use of the notion
of a module object in $\CM$ over an algebra $A \,{\in}\, \Cat{C}$. This notion appears 
   % XXX  
      in \cite{DaNi} (before Lemma 3.2) and
implicitly in \cite{ENOfuhom}; for the reader's convenience we state the full definition here:

\begin{definition}
  \label{def:AmoduleinM}
  Let $\CM$ be a module category over a monoidal category $\Cat{C}$
  and $A \,{\in}\, \Cat{C}$ an algebra. A (left) \emph{module object}
  $(m, \mu_{m})$ in $\CM$ over $A$ is an object $m \,{\in} \CM$ together with a morphism
  $\mu_{m} \colon A \act m \,{\rightarrow}\, m$ in $\Cat{M}$, such that the usual compatibilities 
  with respect to multiplication and unit of $A$ are satisfied.
  Given two module objects $(m, \mu_{m})$ and $(n,\mu_{n})$ in $\CM$ over $A$, a \emph{module morphism}
  $f\colon(m, \mu_{m}) \,{\rightarrow}\, (n,\mu_{n}) $ is a morphism 
  $f \colon m \,{\rightarrow}\, n$ in $\Cat{M}$ such that $\mu_{n} (\id_A \act f) \,{=}\, f \mu_{m}$. 
\end{definition}

\begin{notation}
  The category of left $A$-module objects in $\CM$ and module morphisms is denoted by
  $\Mod_{A}(\CM)$. Analogously we write $\Mod_{A}(\NC)$ for a right $\Cat{C}$-module category $\NC$. 
\end{notation}

The notion of module object allows for yet another novel description 
of the relative tensor product of module categories
   % XXX
     (compare \cite[Lemma\,3.2]{DaNi} for a related statement):

\begin{proposition}
  \label{prop:rel-tensor-with-algebra}
  Let $\MC$ be a right module category over a finite tensor category
  $\Cat{C}$, $A \,{\in}\, \Cat{C}$ an algebra, and $\Mod_{A}(\Cat{C})$ the left
  $\Cat{C}$-module category of $A$-right modules. The relative
  tensor product of the module categories $\MC$ and $\Mod_{A}(\Cat{C})$ can again be
  expressed as a category of modules: it is equivalent to the category of $A$-module objects in $\MC$,
  \begin{equation}
    \label{eq:rel-tensor-cat-modules}
    \MC \boxtensor{\Cat{C}} \Mod_{A}(\Cat{C}) \simeq \Mod_{A}(\MC) \,. 
  \end{equation}
\end{proposition}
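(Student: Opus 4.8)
The plan is to exhibit an explicit $\Cat{C}$-balanced right exact functor $\mathsf{B}\colon \MC \,{\times}\, \Mod_{A}(\Cat{C}) \,{\rightarrow}\, \Mod_{A}(\MC)$ and to verify that $(\Mod_{A}(\MC),\mathsf{B})$ satisfies the universal property of Definition~\ref{definition:tensor-prod}. For $m \,{\in}\, \MC$ and a right $A$-module $(x,\rho_{x}\colon x\otimes A\to x)$ in $\Cat{C}$ I set $\mathsf{B}(m,(x,\rho_{x})) := (m\ract x,\,\mu)$, where $\mu$ is the composite $(m\ract x)\ract A\xrightarrow{(\mu^{r})^{-1}} m\ract(x\otimes A)\xrightarrow{\,\id_{m}\ract\rho_{x}\,} m\ract x$ with $\mu^{r}$ the right module constraint of $\MC$; that $(m\ract x,\mu)$ is an $A$-module object follows from the module pentagon together with the associativity and unitality of $A$. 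On morphisms $\mathsf{B}$ is the evident assignment, and the balancing constraint $\beta_{m,c,(x,\rho_{x})}\colon \mathsf{B}(m\ract c,(x,\rho_{x}))\to \mathsf{B}(m,c\act(x,\rho_{x}))$ is the isomorphism $(\mu^{r})^{-1}\colon (m\ract c)\ract x\to m\ract(c\otimes x)$; one checks it is a morphism of $A$-modules and that it satisfies the pentagon $(\ref{eq:balanced-functor})$ and triangle $(\ref{eq:balanced-unit})$ --- a routine diagram chase with the axioms of Definitions~\ref{definition:mod-cat} and~\ref{def:AmoduleinM}. The functor $\mathsf{B}$ is right exact in each variable because the action of $\Cat{C}$ on $\MC$ is exact and the forgetful functors $\Mod_{A}(\Cat{C})\to\Cat{C}$ and $\Mod_{A}(\MC)\to\MC$ are exact, the latter also conservative. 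Finally, the module constraint $\mu^{r}$ furnishes a natural isomorphism $\mathsf{B}(m,\mathsf{Free}_{\Cat{C}}(c))\cong\mathsf{Free}_{\MC}(m\ract c)$, where $\mathsf{Free}_{\Cat{C}},\mathsf{Free}_{\MC}$ are the left adjoints of the two forgetful functors; in particular $\mathsf{B}(m,A)\cong\mathsf{Free}_{\MC}(m)$.

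To establish the universal property, fix a linear category $\Cat{A}$; I must show that $\Phi\colon \mathsf{G}\mapsto\mathsf{G}\circ\mathsf{B}$ is an equivalence $\Funre(\Mod_{A}(\MC),\Cat{A})\to\Funbalre(\MC\times\Mod_{A}(\Cat{C}),\Cat{A})$, and specify a quasi-inverse. The structural input is that every $A$-module object $(p,\mu_{p})$ in $\MC$ is the coequalizer of a reflexive pair $\mathsf{Free}_{\MC}(p\ract A)\rightrightarrows\mathsf{Free}_{\MC}(p)$ of free $A$-modules (the bar presentation of $(p,\mu_{p})$ over the monad $-\ract A$), with the two maps built from $\mu_{p}$ and from the multiplication $\mathfrak{m}$ of $A$; in the abelian category $\Mod_{A}(\MC)$ such a coequalizer is a cokernel and hence is preserved by every right exact functor. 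Likewise every right $A$-module $x$ in $\Cat{C}$ is the coequalizer of a reflexive pair of free $A$-modules. Given a balanced right exact $\mathsf{F}\colon\MC\times\Mod_{A}(\Cat{C})\to\Cat{A}$, combining these presentations with the free-module identification of the previous paragraph shows that any $\mathsf{G}$ with $\mathsf{G}\circ\mathsf{B}\cong\mathsf{F}$ is forced, on free modules, to satisfy $\mathsf{G}(\mathsf{Free}_{\MC}(m))\cong\mathsf{F}(m,A)$ and, by right exactness, $\mathsf{G}(p,\mu_{p})\cong$ the coequalizer of $\mathsf{F}(p\ract A,A)\rightrightarrows\mathsf{F}(p,A)$ with the two maps $\mathsf{F}(\mu_{p}\times 1_{A})$ and $\mathsf{F}(1_{p}\times\mathfrak{m})\circ\beta^{\mathsf{F}}_{p,A,A}$. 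Taking this as the definition of $\Psi(\mathsf{F}):=\mathsf{G}$, one checks in turn: $\mathsf{G}$ is well defined and functorial; $\mathsf{G}$ is right exact (writing it as a $\coker$ of a difference of two right exact functors $\MC\to\Cat{A}$ precomposed with suitable exact functors); $\mathsf{G}\circ\mathsf{B}\cong\mathsf{F}$ as balanced functors; and, conversely, $\Psi(\mathsf{G}\circ\mathsf{B})\cong\mathsf{G}$ for every $\mathsf{G}\in\Funre(\Mod_{A}(\MC),\Cat{A})$. Promoting the resulting equivalence $\Phi$, with quasi-inverse $\Psi$, to an adjoint equivalence in the standard way supplies the remaining data required by Definition~\ref{definition:tensor-prod}.

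I expect the main obstacle to be the last two compatibility checks: proving that the isomorphism $\Psi(\mathsf{F})\circ\mathsf{B}\cong\mathsf{F}$ respects the balancing constraints, and that $\Psi(\mathsf{F})$ is right exact on all short exact sequences, requires tracking the balancing $\beta^{\mathsf{F}}$ and the module constraint $\mu^{r}$ simultaneously through the bar presentations of $(p,\mu_{p})$ and of $x$; the remaining verifications are formal or routine diagram chases. As an independent cross-check --- and a shorter, though less self-contained, alternative argument --- one may instead invoke the first realization of Section~\ref{sec:constr-tens-prod}: by the right-module version of $(\ref{equation:Theorem-Ostrik})$, see \cite[Thm.\,2.18]{DSSbal}, there is an algebra $B\,{\in}\,\Cat{C}$ with ${}_{B}\Mod(\Cat{C})\simeq\MC$ as right $\Cat{C}$-module categories, so $\MC\boxtensor{\Cat{C}}\Mod_{A}(\Cat{C})\simeq{}_{B}\Mod_{A}(\Cat{C})$; on the other hand a right $A$-module object in ${}_{B}\Mod(\Cat{C})$ --- whose right $\Cat{C}$-action is $y\ract c=y\otimes c$ --- is precisely a $(B,A)$-bimodule in $\Cat{C}$, so $\Mod_{A}(\MC)\simeq{}_{B}\Mod_{A}(\Cat{C})$ as well, and under these identifications $\mathsf{B}$ corresponds to the universal balanced functor $p\boxtimes n\mapsto p\otimes n$ of that realization.
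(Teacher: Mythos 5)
Your argument is correct, but your principal route is not the one the paper takes; the paper's proof is exactly your closing ``cross-check'': choose, via the right-module version of \eqref{equation:Theorem-Ostrik}, an algebra $B$ with $\MC\simeq{}_{B\!}\Mod(\Cat{C})$, invoke the realization of $\MC\boxtensor{\Cat{C}}\Mod_A(\Cat{C})$ as ${}_{B\!}\Mod_A(\Cat{C})$ from Section \ref{sec:constr-tens-prod}, and observe that a $(B,A)$-bimodule in $\Cat{C}$ is the same datum as an $A$-module object in ${}_{B\!}\Mod(\Cat{C})\simeq\MC$ --- a three-line reduction that outsources all universal-property work to \cite{DSSbal}. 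Your main argument instead verifies the universal property of Definition \ref{definition:tensor-prod} directly for the pair $(\Mod_A(\MC),\mathsf{B})$, with $\mathsf{B}(m,x)=m\ract x$ and the quasi-inverse $\Psi$ built from the bar (reflexive-coequalizer) presentation of $A$-module objects by free modules; this is in effect the monadic argument of \cite[Thm.\,3.3]{DSSbal} transported to the present setting. What your route buys is self-containedness and explicitness: no auxiliary algebra $B$ (hence no non-canonical choice of Morita representative) is needed, and the universal balanced functor and $\Psi$ are written down concretely; what it costs is precisely the deferred verifications you flag, which carry the real content --- that the coequalizer defining $\Psi(\mathsf{F})$ is functorial and right exact (your cokernel-of-a-map-of-right-exact-functors observation does hold, by a diagram chase, and the needed cokernels exist since the target $\Cat{A}$ is abelian under the paper's standing assumptions), and that the isomorphisms $\Psi(\mathsf{F})\circ\mathsf{B}\cong\mathsf{F}$ and $\Psi(\mathsf{G}\circ\mathsf{B})\cong\mathsf{G}$ are balanced and natural, which requires chasing $\beta^{\mathsf{F}}$ and the constraint $\mu^{r}$ through both bar presentations. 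With those checks carried out (they are routine but not short), your direct proof is complete; as it stands it is a correct and correctly structured sketch, and your alternative paragraph coincides with the paper's actual proof.
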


\begin{proof}
  According to Equation (\ref{equation:Theorem-Ostrik}), we can choose an algebra $B \,{\in}\, \Cat{C}$ 
  and an equivalence $\MC \,{\simeq}\, {}_{B\!}\Mod(\Cat{C})$ of module categories. Then the
  relative tensor product $ \MC \boxtensor{\Cat{C}} \Mod_{A}(\Cat{C})$ is equivalent to the category
  ${}_{B\!}\Mod(\Cat{C})_{\!A}$ of $(B,A)$-bimodules in $\Cat{C}$. It is straightforward to see that an
  object $x \,{\in}\, {}_{B\!}\Mod(\Cat{C})_{\!A} $ is the same as a $B$-left module object in $\Cat{C}$
  together with an $A$-module action $\mu_{x,A}\colon x \,{\otimes}\, A \,{\rightarrow}\, x$ that is 
  a morphism of $B$-left modules. Thus an object in  $ \MC \boxtensor{\Cat{C}} \Mod_{A}(\Cat{C})$ can
  equivalently be described as an object in $\Mod_{A}({}_{B}\!\Mod(\Cat{C})) \,{\simeq}\,  \Mod_{A}(\MC)$. 
  This assignment extends to the morphisms in the respective categories and thus provides
  the asserted equivalence. 
\end{proof}

\medskip

We need a convenient notation for objects in Deligne products. As an example, consider
the bimodule category $\CCC$ as a left module category over the monoidal category
$\Cat{C} \boxtimes \Catc{C}$. The action gives a monoidal functor
$\Cat{C} \boxtimes \Catc{C}\,{\to}\, \End(\Cat{C})$, and such a functor is determined
by a functor $\Cat{C}\times \Catc{C} \,{\to}\, \End(\Cat{C})$. It is thus sufficient to
know the action of objects of the form $x \boxtimes \widetilde x$;
these act as $c \,{\mapsto}\, x \,{\otimes}\, c \,{\otimes}\, \widetilde x$. 
(This is still true for non-semisimple categories, even though in that case
not every object of $\Cat{C} \boxtimes \Catc{C}$
is a direct sum of factorized objects $x \,{\boxtimes}\, \widetilde{x}$.)
For brevity, we will denote the action of a general object $x \,{\in}\, \Cat{C} \boxtimes \Catc{C}$ 
on $c$ by $x_{\swe{1}} \otimes c \otimes x_{\swe{2}}$.

In the next proposition we provide two characterizations of the \ct\ 
in terms of algebra objects. For the first characterization we use the \emph{canonical algebra} 
$A^{\mathrm{can}}$ of $\Cat{C} \boxtimes \Catc{C}$, which by definition is the algebra 
object in $\Cat{C} \boxtimes \Catc{C}$ for which the category of $A^{\mathrm{can}}$-modules 
in $\Cat{C} \boxtimes \Catc{C}$ is equivalent to the canonical bimodule category $\CCC$. 
Such an algebra exists by equation (\ref{equation:Theorem-Ostrik}). If $\Cat{C}$ is 
semisimple, then  $A^{\mathrm{can}} \,{=}\, \bigoplus_{u} c_{u} \boxtimes c_{u}^{\vee}$ 
is the algebra already considered in \eqref{coend-alg}.

\begin{proposition}
  \label{prop:AcaMmod}
  Let $\CMC$ be a bimodule category over a finite tensor category $\Cat{C}$, and let 
  $A \,{\in}\, \Cat{C} \,{\boxtimes}\, \Catc{C}$ be
  an algebra object such that $\Bim{}{\Cat{M}}{\Cat{C} \,{\boxtimes}\, \Catc{C}}$ 
  is equivalent to ${}_{A\!}\Mod(\Cat{C} \,{\boxtimes}\, \Catc{C})$. Then  
  the \ct\ $\circtensor \CMC$ is equivalent to the following categories: 
  \\[4pt]
  {\rm (i)} the category ${}_{A^{\mathrm{can}\!}}\Mod_{A}(\Cat{C} \boxtimes \Catc{C})$
  of $(A^{\mathrm{can}},A)$-bimodules in $\Cat{C} \boxtimes \Catc{C}$;
  \\[4pt]
  {\rm (ii)} the category  $\Mod_{A}(\CCC)$, with objects being pairs consisting
  of an object $c \,{\in}\, \Cat{C}$ and a morphism 
  $A_{\swe{1}} \,{\otimes}\, c \,{\otimes}\,  A_{\swe{2}} \,{\rightarrow}\, c$
  that is compatible with the multiplication and the unit of $A$. 
\end{proposition}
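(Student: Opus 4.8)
The plan is to deduce both descriptions from the model in Theorem~\ref{thm:models-circ-tensor}\,(iv), which realizes $\circtensor\CMC$ as the relative tensor product $\Bim{}{\Cat{M}}{\mathcal{E}}\boxtensor{\mathcal{E}}\Bim{\mathcal{E}}{\Cat{C}}{}$ over $\mathcal{E}:=\Cat{C}\boxtimes\Catc{C}$, and then to plug into it the known realizations of the relative tensor product in terms of modules over algebras. By hypothesis $\Bim{}{\Cat{M}}{\mathcal{E}}\simeq{}_{A}\Mod(\mathcal{E})$, and by the defining property of the canonical algebra $\Bim{\mathcal{E}}{\Cat{C}}{}=\CCC\simeq\Mod_{A^{\mathrm{can}}}(\mathcal{E})$. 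For part~(i) I would invoke the first realization of the relative tensor product from Section~\ref{sec:constr-tens-prod} (due to~\cite{DSSbal}), according to which the relative tensor product of ${}_{P}\Mod(\mathcal{E})$ and $\Mod_{Q}(\mathcal{E})$ is the category ${}_{P}\Mod_{Q}(\mathcal{E})$ of $(P,Q)$-bimodules in $\mathcal{E}$; taking $P=A$ and $Q=A^{\mathrm{can}}$ this yields $\circtensor\CMC\simeq{}_{A}\Mod_{A^{\mathrm{can}}}(\mathcal{E})$, which is the category asserted in~(i) once one identifies $(A,A^{\mathrm{can}})$-bimodules with $(A^{\mathrm{can}},A)$-bimodules (see below).

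For part~(ii) I would instead feed the model of Theorem~\ref{thm:models-circ-tensor}\,(iv) into the new Proposition~\ref{prop:rel-tensor-with-algebra}, in the left-handed form in which the algebra acts on the first factor of the relative tensor product (proved exactly as the version stated there): since $\Bim{}{\Cat{M}}{\mathcal{E}}\simeq{}_{A}\Mod(\mathcal{E})$, that proposition identifies $\Bim{}{\Cat{M}}{\mathcal{E}}\boxtensor{\mathcal{E}}\Bim{\mathcal{E}}{\Cat{C}}{}$ with the category $\Mod_{A}(\CCC)$ of $A$-module objects in the bimodule category $\CCC$ in the sense of Definition~\ref{def:AmoduleinM}. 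It then remains to unwind $\Mod_{A}(\CCC)$: the action of an object $x\in\mathcal{E}$ on $c\in\CCC$ is $x\act c=x_{\swe{1}}\otimes c\otimes x_{\swe{2}}$ in the notation fixed just before the proposition, so by Definition~\ref{def:AmoduleinM} an object of $\Mod_{A}(\CCC)$ is exactly a pair consisting of $c\in\Cat{C}$ together with a morphism $A_{\swe{1}}\otimes c\otimes A_{\swe{2}}\to c$ compatible with the multiplication and unit of $A$, and a morphism of such objects is a morphism of $\Cat{C}$ intertwining these structure maps; this is the description in~(ii).

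The conceptual input is entirely carried by Theorem~\ref{thm:models-circ-tensor}, the realizations of the relative tensor product, Proposition~\ref{prop:rel-tensor-with-algebra} and equation~(\ref{equation:Theorem-Ostrik}); the only genuinely delicate point, and hence the main obstacle, is the bookkeeping of sides. One must track carefully how the left and the right $\Cat{C}\boxtimes\Catc{C}$-module structures carried by a $(\Cat{C},\Cat{C})$-bimodule category are interchanged, so that $A$ and $A^{\mathrm{can}}$ come out on precisely the sides asserted in the statement (in particular, that the realization of $\circtensor\CMC$ through a bimodule category over the pair $(A,A^{\mathrm{can}})$ agrees, after exchanging the two tensor factors of $\mathcal{E}$, with the one over the pair $(A^{\mathrm{can}},A)$), and one must verify that the equivalences produced by the cited results respect the relevant $\mathcal{E}$-module structures, and are not merely equivalences of the underlying abelian categories, so that the algebra-module descriptions are genuinely available on both factors of $\boxtensor{\mathcal{E}}$.
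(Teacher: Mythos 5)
Your proposal follows essentially the same route as the paper's own (very terse) proof: part (i) is Theorem \ref{thm:models-circ-tensor}\,(iv) combined with the realization of the relative tensor product as a bimodule category from Section \ref{sec:constr-tens-prod}, and part (ii) is Theorem \ref{thm:models-circ-tensor}\,(iv) combined with (the mirrored form of) Proposition \ref{prop:rel-tensor-with-algebra}, exactly as you describe. The side-bookkeeping you flag (that the naive application yields $(A,A^{\mathrm{can}})$-bimodules, which must be matched against the $(A^{\mathrm{can}},A)$-convention in the statement via the flip of the two factors of $\Cat{C}\boxtimes\Catc{C}$) is glossed over in the paper's proof -- and its notation in Example \ref{example:DW-algebraic} is similarly loose -- so your version is, if anything, the more careful one.
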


\begin{proof}
  For part (i) we use the equivalence
  $\circtensor \CMC \,{\simeq}\, \Bim{}{\Cat{M}}{\Cat{C} \boxtimes \Catc{C}}
  \boxtensor{\Cat{C} \boxtimes \Catc{C}} \Bim{\Cat{C} \boxtimes \Catc{C}}{\Cat{C}}{}$ from 
  Theorem \ref{thm:models-circ-tensor}.  The statement then follows from the construction of 
  the relative tensor product as a category of bimodules, see Section \ref{sec:constr-tens-prod}. 
  Part (ii) is implied by the description of the trace in Theorem \ref{thm:models-circ-tensor}\,(iv) 
  when combined with Proposition \ref{prop:rel-tensor-with-algebra}.
\end{proof}

%%%%%%%%%%%%%%%%%%%%%%%%%%%%%%%%%%%%%%%%%%%%%%%%%%%%%%%%%%%%%%%%%%%%%%%%%%%%%%%% 

\subsection{Twisted center and \ct\ in illustrative situations}\label{sec:ex}

Let us now discuss a few specific contexts in which one encounters the structures studied 
above. First we consider the relation between the twisted Drinfeld center and (generalized) 
pivotal structures on a finite tensor category $\Cat{C}$. 

\smallskip

{\bf Monoidal structures on \ct s via pivotal structures}.
Let $\Cat{C}$ be a finite tensor category equipped with an equivalence 
$\mathsf{P}\colon \tCCC \,{\xrightarrow{\,\simeq\,}}\, \CCC$ of bimodule categories.
Such an equivalence is for instance induced, via pullback of the left module structure,
by a pivotal structure on $\Cat C$, i.e.\ by a monoidal natural isomorphism 
$x^{\vee\vee} \,{\cong}\, x$ for all $x \,{\in}\, \Cat{C}$. More generally, according to Lemma 
\ref{lemma:twisted-center-equiv}, the object $P \,{:=}\, \mathsf{P}(\unit) \,{\in}\, \Cat{C}$ 
comes equipped with a family of natural isomorphisms
  \be
  \label{xP=Pxvv}
  p_{x} \colon \quad x \otimes P \xrightarrow{~\cong~} P \otimes x^{\vee\vee}
  \ee
for $x \,{\in}\, \Cat{C}$ that is compatible with the monoidal structure,
and can thus be regarded as an object in $\centCt(\CCC)$.
Assume that $\mathsf{Q} \colon \CCC \,{\xrightarrow{\,\simeq\,}}\, \tCCC$ is a bimodule 
functor that together with $\mathsf{P}$ forms an equivalence of bimodule categories.
Again by  Lemma \ref{lemma:twisted-center-equiv}, $\mathsf{Q}$ is determined by the 
object $Q \,{:=}\, \mathsf{Q}(\unit) \,{\in}\, \cent(\tCCC)$ which is 
equipped with a family of coherent natural isomorphisms 
$q_{x} \colon x^{\vee \vee} \,{\otimes}\, Q \,{\xrightarrow{~\cong~}}\, Q \,{\otimes}\, x$.
The natural isomorphisms 
$\mathsf{P} \,{\circ}\, \mathsf{Q} \,{\xrightarrow{~\cong~}}\, \id_{\!\CCC}$ and 
$\mathsf{Q} \,{\circ}\, \mathsf{P} \,{\xrightarrow{~\cong~}}\, \id_{\tCCC}$ 
then yield in particular
  %% (using first the unit constraint in $\Cat{C}$ and then that $\mathsf{P}$ is a module functor)
an isomorphism 
  \be
  \mathsf{P}(\mathsf{Q}(\unit)) \cong \mathsf{P}( Q \act \unit) \cong Q \act \mathsf{P}(\unit)
  = Q \otimes P  \xrightarrow{~\cong~} \unit \,,
  \ee
and similarly an isomorphism $P \,{\otimes}\, Q \,{\xrightarrow{\,\cong\,}} \unit$,
which establish $Q$ as inverse of $P \,{\in}\, \Cat{C}$. 

Conversely, an invertible object ${P} \,{\in}\, \Cat{C}$ with a family \eqref{xP=Pxvv}
of natural isomorphisms determines an equivalence $\mathsf{P}\colon \tCCC \,{\to}\, \CCC$.
We refer to these data as a \emph{generalized pivotal structure}.

A generalized pivotal structure $\mathsf{P}$ induces for each bimodule category $\CMC$ 
an equivalence $\circtensor \Cat{M} \,{\simeq}\, \cent(\CMC)$. In particular there is an 
equivalence $\mathsf{F}_{\mathsf{P}}\colon \centCt(\CCC) \,{\simeq}\, \cent(\Cat{C})$. 
This equivalence $\mathsf{F}_{\mathsf{P}}$ is monoidal for the monoidal structure 
$\otimes_{\mathsf{P}}$ on $\centCt(\CCC)$ that is obtained by pull-back of the 
monoidal structure of $\cent(\Cat{C})$ along $\mathsf{F}_{\mathsf{P}}$, i.e.
  \be
  x\otimes_{\mathsf{P}} y = x \otimes P \otimes y 
  \ee
for $x,y \,{\in}\, \centCt(\CCC) $, with the structure of $x \,{\otimes}\, P \,{\otimes}\, y$ 
as an object of $\centCt(\CCC)$ induced by the respective structures of $x,y$ and $P$.

Next assume we are given a bimodule category $\CMC$ and a monoidal functor 
$\mathsf{F}\colon \Cat{C} \,{\to}\, \Cat{C}$. We can  recover the twisted $\mathsf{F}$-twisted 
center of $\CMC$ from Definition \ref{definition:twisted-center} as a trace: For every 
monoidal endofunctor $\mathsf G$ of $\Cat C$ there is an equivalence 
$\mathcal{Z}_{\Cat{C}}^{\mathsf{D}} (\Bimod{}{\Cat{C}}{\Cat{M}}{\mathsf{G}}{\Cat{C}})
\,{\simeq}\, \mathcal{Z}_{\Cat{C}}^{\mathsf{D\circ G}} (\Bimod{}{\Cat{C}}{\Cat{M}}{}{\Cat{C}}) $.
Combining Theorem \ref{thm:models-circ-tensor}\,(i) with the isomorphism 
$\Tau \,{\circ}\, \Rho \,{\cong}\, \id$, where $\Rho$ is the double left dual functor, 
thus implies that
  \begin{equation}
    \label{eq:F-twisted-as-ct}
    \centCF(\CMC) \simeq \circtensor \CMRFC \,.   
  \end{equation}
Here $\CMRFC$ is the bimodule category $\CMC$ with right $\Cat{C}$-action twisted by 
$\Rho \,{\circ}\, \mathsf{F}$.

\smallskip

{\bf The example of vector spaces graded over a group}.
As another illustration, consider $G$-graded vector spaces over an algebraically closed
field $\Bbbk$, a situation that will be studied again in Section \ref{sec:5} from a different 
perspective. For $G$ a finite group, a three-cocycle $\omega\,{\in}\, Z^3(G,\Bbbk^{\times})$
defines the structure of a finite tensor category  $\Cat{C} \,{=}\, \Vect(G)^\omega_{\Bbbk}$ 
on the linear category of $G$-graded $\Bbbk$-vector spaces, with the associator being 
furnished by the cocycle $\omega$.

The indecomposable bimodule categories over $\Cat{C}$ are classified in \cite[Example\,2.1]{OsFin}
(see also \cite[Prop.\,4.1]{FinTen}): they are characterized by a subgroup
embedding $\imath\colon H \,{\rightarrow}\, G \,{\times}\, G$ together with a 2-cochain $\theta$
on $H$ satisfying ${\rm d} \theta \eq \imath_1^*\omega \,{\cdot}\, \imath_2^*\omega^{-1}$, with
$\imath_1 \,{=}\, p_1 \,{\circ}\, \imath $ and $\imath_2 \,{=}\, p_2 \,{\circ}\, \imath$,
$p_1$ and $p_2$ being the projections from $G \,{\times}\, G$ to its two factors.
These data define a $(\Cat{C},\Cat{C})$-bimodule category $\Cat{M}(H, \theta)$.
Such a bimodule category can, in turn, be realized as the category of right modules over an 
algebra (determined up to Morita equivalence) $A_{H,\theta}$ in the finite tensor category
$\Vect(G)^\omega \boxtimes\Vect(G)^{\omega^{-1}}$. As an object,
  \be
  \label{AHtheta}
  A_{H,\theta} = \bigoplus_{h\in H}\, S_{(\ie(h)^{-1},\iz(h)^{-1})} \,,
  \ee 
where $S_{(\gamma_1^{},\gamma_2^{})} \,{\cong}\, \Bbbk$ with 
$(\gamma_1^{},\gamma_2^{}) \,{\in}\, G\,{\times}\, G$ is the ground field in homogeneous 
degree $(\gamma_1^{},\gamma_2^{})$, and the algebra structure on this object is 
the one of a twisted group algebra with the twist given by the cochain $\theta$.

\begin{example}
  \label{example:DW-algebraic}
  Invoking Proposition \ref{prop:AcaMmod}, we learn that the trace  
  of the bimodule category $\Cat{M}(H, \theta)$ has the following explicit description. 
  There is an equivalence
  \be
  \circtensor \Cat{M}(H, \theta) \,\simeq\, 
  {}_{A_{H,\theta}\!}\Mod_{A_{\Gdiag}}\!\! \big(\Vect(G)^\omega \boxtimes\Vect(G)^{\omega^{-1}}\big)
  \ee
  to the category of $A_{\Gdiag}$-$A_{H,\theta}$-bimodules in 
  $\Vect(G)^\omega \boxtimes\Vect(G)^{\omega^{-1}}$, with the algebra $A_{\Gdiag}$ given by
  \be
  \label{Gvtheta}
  A_{\Gdiag} = \bigoplus_{g\in G} S_{(g,g)} \,,
  \ee
  as an object of $\Vect(G)^\omega \boxtimes\Vect(G)^{\omega^{-1}}$ 
  and with the product on $A_{\Gdiag}$ being the one of the group algebra of $G$. 
\end{example}

%%%%%%%%%%%%%%%%%%%%%%%%%%%%%%%%%%%%%%%%%%%%%%%%%%%%%%%%%%%%%%%%%%%%%%%%%%%%%%%% 

\section{Categories assigned to circles in  TFT with defects}\label{sec:5}

One area in which some of the structures presented in Sections  \ref{sec2}\,--\,\ref{sec3} 
appear naturally is topological field theory or, more precisely, topological field theories 
with defects. Specifically, a three-dimensional 3-2-1-extended topological field theory
is a symmetric monoidal 2-func\-tor from an extended cobordism bicategory 
$\mathsf{Cob}$ to the bicategory 2-$\Vect$. In particular, it associates a linear 
category $\mathrm{tft}(\Sigma)$ to any compact one-manifold $\Sigma$. Several
variants of cobordism categories can be considered. In the simplest case, objects
are compact oriented smooth one-manifolds, 1-morphisms are two-manifolds with boundary
(with the boundary components to be interpreted as ``gluing boundaries'') and 
2-morphisms are three-manifolds with corners. 

Among the more general cobordism categories considered in the literature are in particular 
manifolds with singularities or with distinguished submanifolds of various codimension, 
which are to be thought of as the loci of various defects
(see e.g.\ \cite[Sect.\,4.3]{Lurie} and \cite{AFT}).
In this setup the objects are one-manifolds with distinguished points at which defects
are located. (It is then in fact natural to extend the cobordism bicategory even further
in such a way that intervals with distinguished points are objects as well; this amounts 
to the inclusion of boundary conditions. This extension of the cobordism category is,
however, not necessary for understanding the motivation of the present paper 
from topological field theory). 

By invoking the monoidal structure of $\mathsf{Cob}$
as well as fusion of defects and fusion of defects to boundaries, one can restrict
one's attention to two basic one-manifolds: an interval
without defect points and a circle with one defect point.

More specifically, what we have in mind are three-dimensional topological field theories 
of Turaev-Viro type. Such theories are constructed from spherical fusion categories 
$\Cat C$. It is well understood that the Turaev-Viro theory based on $\Cat C$ associates 
to a circle without defects the Drinfeld center $\cent(\Cat C)$. Surface defects 
between possibly different Turaev-Viro theories are well understood, too, see e.g.\
\cite{FuSchwVal}: a surface defect that separates two three-dimensional regions labeled
by $\cent(\Cat C)$ and $\cent(\Cat D)$, respectively, is labeled by a
$\Cat C$-$\Cat D$-bimodule category. We regard these bimodule categories
as 1-morphisms in a 3-category of spherical fusion categories. Fusion of defects is 
then composition of these 1-morphisms, and is thus the relative tensor product, as given
in Definition \ref{definition:tensor-prod}, over the relevant fusion category.
As a consequence, the data associated to a circle $S$ with a defect point $p$ consist 
of a spherical fusion category $\Cat C$ assigned to the open interval $S{\setminus}p$
and a $\Cat C$-bimodule category $\CMC$ assigned to the defect point. We now explain
that the $\Bbbk$-linear category $\mathrm{tft}(S,p)$ which the topological field theory 
assigns to such a circle should be the trace of the bimodule category,
  \be
  \label{cat4circle}
  \mathrm{tft}(S,p) = \circtensor \CMC .
  \ee

We can think about the labeled circle $(S,p)$ as a diagram in a symmetric monoidal tricategory 
as follows: it is a coevaluation for the fusion category $\Cat C$, followed by the 1-morphism
$\CMC$ and an evaluation for $\Cat C$. Since the coevaluations are given by $\Cat C$,
seen as an appropriate bimodule category, we naturally end up with the $\Bbbk$-linear 
abelian category $\Bim{}{\Cat{M}}{\Cat{C} \boxtimes \Catc{C}}
\boxtensor{\Cat{C} \boxtimes \Catc{C}} \Bim{\Cat{C} \boxtimes \Catc{C}}{\Cat{C}}{}$,
i.e.\ with the model (iv) for the trace in Theorem \ref{thm:models-circ-tensor}.

Alternatively, we may consider what happens when the circle is folded to an interval 
in such a way that one of the resulting end points is $p$. The category associated to
an interval can be described in two ways: as a Hom-category in a bicategory (see e.g.\
\cite[Eq.\,(2.14)]{FSV14}), leading to the model (ii) of 
Theorem \ref{thm:models-circ-tensor}, or as a relative Deligne tensor product of a 
right module and left module category, yielding the model (iv) of Proposition 
Theorem \ref{thm:models-circ-tensor}.
On the other hand, results in other contexts suggest a (derived version of) Hochschild 
homology, see \cite{GTZ}, which in our setting reduces to a Drinfeld center as in model
(i) of Theorem \ref{thm:models-circ-tensor}.

Note that for the particular case of the `invisible' defect, corresponding to the regular
$\Cat C$-bi\-mo\-dule $\CCC$, for spherical $\Cat{C}$ the \ct\ \eqref{cat4circle} 
is just the Drinfeld center of $\Cat C$ (see Theorem \ref{thm:models-circ-tensor}\,(i)), 
as appropriate.

%%%%%%%%%%%%%%%%%%%%%%%%%%%%%%%%%%%%%%%%%%%%%%%%%%%%%%%%%%%%%%%%%%%%%%%%%%%%%%%% 

\subsection{Dijkgraaf-Witten theories}\label{sec:gauge}

We now turn to a particular subclass of Turaev-Viro theories, Dijkgraaf Witten theories.
These theories admit an explicit gauge-theoretic construction which allows to compute
the category $\mathrm{tft}(S,p)$ independently and thus to corroborate formula
\eqref{cat4circle} further. A similar strategy has been employed
for categories associated to intervals in Section 3.5 of \cite{FSV14}.

The input data for (a three-dimensional stratum of) a Dijkgraaf-Witten theory are
a finite group $G$ and a 3-cocycle $\omega\,{\in}\, Z^3(G,\complexx)$. Cohomologous
cocycles $\omega$ yield equivalent theories, so that without loss of generality 
$\omega$ can be assumed to be normalized,
  \be
  \omega(e,g_2^{},g_3^{}) = 1 \,,
  \label{omeganorm}
  \ee
as well as (see e.g.\ \cite[Sect.\,3]{BarWes} and \cite{FGSV99}) 
   % bawe2 e.g. text after lemma 3.11
to carry the symmetries of a tetrahedron, i.e.\
  \be\label{s4symm}
  \varpi(a,b,c) = \varpi(a^{-1},ab,c)^{-1}_{} = \varpi(ab,b^{-1},bc)^{-1}_{} = \varpi(a,bc,c^{-1})^{-1}_{} .
 %% \
 %% also derived identities:    
 %% = \varpi(c^{-1},b^{-1},a^{-1}) = \varpi(abc,c^{-1},b^{-1})^{-1}_{} = \varpi(b^{-1},a^{-1},abc)^{-1}_{} 
 %%                              %% \tau_{23}\tau_{34}\tau_{23}         \tau_{12}\tau_{23}\tau_{12}
 %% the three transformations in the first line correspond to $\tau_{12}, \tau_{23}$
 %% and $\tau_{34}$ in (7.8) of \cite{FGSV99}.
  \ee 

In the sequel we determine the category that a 3-2-1-extended Dijkgraaf-Witten theory 
assigns to a circle with a defect point through a gauge-theoretic construction.
To compare it to the algebraic description, we keep in mind that, as a 
Turaev-Viro theory, a Dijkgraaf-Witten theory is based on the
spherical fusion categories given by $G$-graded vector
spaces with associativity constraint twisted by $\omega$ and with their 
canonical spherical structure. As we will see, the result coincides with the \ct\ 
of the relevant bimodule categories obtained in Example \ref{example:DW-algebraic}.

%%%%%%%%%%%%%%%%%%%%%%%%%%%%%%%%%%%%%%%%%%%%%%%%%%%%%%%%%%%%%%%%%%%%%%%%%%%%%%%% 
\medskip

The gauge-theoretic construction of the Dijkgraaf-Witten theory based on a finite group $G$ and 
3-cocycle $\omega$ as a 3-2-1-extended topological field theory proceeds in two steps: 
for the first step, one notes that 1-morphisms in the extended cobordism category are spans
of manifolds and 2-morphisms are (equivalence classes of) spans of spans. Applying the groupoid-valued
contravariant 2-functor $\mathrm{Bun}_G$ to this structure yields groupoids, cospans of groupoids
and equivalences of cospans of cospans.
The actual values of the field theory are then obtained via twisted linearization, in which
the twist involves a groupoid cocycle obtained \cite{Fr95,Mo14} by transgressing $\omega$ in 
the sense of \cite{Wi08}.
This construction can be extended \cite{FSV14}, with help of the notions of relative 
manifolds and relative bundles, to three-manifolds with boundaries or defect surfaces.

In this geometric description the category for a circle $S$ with one marked point $p$
is obtained as follows (see Section 3.1 of \cite{FSV14}).
To the interval $S{\setminus}\{p\} \,{\subset}\, S$ one has to associate the group
$G$ with cocycle $\omega$, which has the physical interpretation of a (topological) 
bulk Lagrangian. To the marked point one assigns a group homomorphism 
$\imath\colon H\To G\,{\times}\, G$ and a 2-co\-chain $\theta \,{\in}\, C^2(H,\complexx)$ 
on $H$, with the interpretation as a (topological) defect Lagrangian, that satisfies
  \be
  \label{dtheta...}
  {\rm d} \theta = \imath^*(p_1^*\omega {\cdot} (p_2^*\omega)^{-1})
  = \imath_1^*\omega \,{\cdot}\, (\imath_2^*\omega)^{-1} 
  \qquad{\rm with}\qquad
  \imath_1 := p_1 \circ \imath \,, \quad \imath_2 := p_2 \circ \imath \,,
  \ee
where $p_1$ and $p_2$ are the projection from $G \,{\times}\, G$ to the first and
second copy of $G$, respectively. Note that $\theta$ is only relevant up to coboundaries.

Further, due to the normalization of $\omega$, $\theta$ can be taken to be normalized as well,
  \be
  \theta(e,h_2^{}) = 1 \,.
  \ee
To this collection of data the construction of \cite{Mo14,FSV14} associates 
in a first step a groupoid, namely the action groupoid
  \be\label{G|GG|H}
  G\sll G \,{\times}\, G \srr_{\!\imath^-} H \,;
  \ee
where $G$ acts on $G \,{\times}\, G$ from the left as the diagonal subgroup while, as
indicated by the symbol $\imath^-$, $H$ acts by right multiplication after 
mapping it to $G \,{\times}\, G$ by $\imath$ and taking the inverse (whereby one thus
deals with a left action as well). 
In the sequel we restrict our attention to group homomorphisms $\imath$ that
are subgroup embeddings (and thus often tacitly suppress the symbol $\imath$
altogether). These lead to indecomposable module categories while, as discussed
in \cite[App.\,A]{FSV14}, in the generic case one deals with decomposable module categories.

In a second step, the \complex-linear abelian category associated to the pointed 
circle with the data as given above is obtained from the groupoid \eqref{G|GG|H} 
by a twisted linearization process: it is a certain category
  \be\label{lght}
  \lght :=
  \big[\, G\sll G \,{\times}\, G \srr_{\!\imath^-} H,\Vect \,{\big]}^{\tau_{\omega,\theta}^{}} 
  \ee
of  functors from the groupoid \eqref{G|GG|H} to vector spaces, which is defined as follows.
The twisting groupoid cocycle $\tau(\omega,\theta)$,
representing a class in $H^2(G\sll G \,{\times}\, G \srr_{\!\imath^-} H,\complexx)$,
is obtained by an appropriate transgression prescription, which we will present in the 
next paragraph. It depends both on the three-cocycle $\omega\,{\in}\, Z^3(G,\complexx)$ 
and on the 2-cochain $\theta \,{\in}\, C^2(H,\complexx)$.

To describe the category $\lght$ given in \eqref{lght} explicitly,
we first recall the general prescription for twisted linearizations:
Given a finite group $K$ and a subgroup $L\,{\le}\,K$, a 3-cocycle $\varpi$ 
on $K$ and a 2-cochain $\vartheta$ on $L$ satisfying 
$\mathrm d\vartheta \eq \varpi|_{L\times L\times L}^{}$, 
the twisted linearization $[L\sll K,\Vect_\complex]^\tau_{}$ is the following
category \cite[Def.\,3.3]{FSV14}: objects are finite-dimensional $K$-graded vector spaces
which carry a projective linear action $\rho$ of $L$,
modifying the $K$-gra\-ding by left multiplication, while morphisms are $K$-homogeneous
maps commuting with the $L$-action. The groupoid cocycle $\tau \eq \tau_{\varpi,\vartheta}$ 
arises in the composition law of the action $\rho$, according to
  \be
  \label{FSV14-3.16}
  \rho_{l_1 l_2}^{}|_{V_k}^{} = \tau(k;l_1,l_2)\, \rho_{l_1}^{}|_{V_{l_2 k}}^{}
  \,{\circ}\, \rho_{l_2}^{}|_{V_k}^{}
  \ee
for $k\,{\in}\, K$ and $l_1,l_2 \,{\in}\, L$.

In the case of the category $\lght$ of our interest, the objects are $G{\times}G$-graded vector spaces
  \be
  V = \bigoplus_{g_1,g_2\in G} V_{(g_1^{},g_2^{})}
  \label{VGG}
  \ee
endowed with a left $G \,{\times}\, H$-action $\pi$ such that
  \be
  \pi_{g,h}:\quad V_{(g_1^{},g_2^{})} \to V_{(gg_1^{},gg_2^{})\,\imath(h)^{-1}}
  \label{pigh}
  \ee
for $g\,{\in}\,G$ and $h\,{\in}\,H$. The cocycle $\tau$ can be computed 
by an algorithm which utilizes three-di\-mensional diagrams and their decomposition 
into simplices. To this end, the objects and morphisms of the groupoid $L\sll K$ are 
represented, respectively, by one- and two-dimensional graphical elements
with edges labeled by group elements, subject to the holonomy condition that the
product of group elements along a closed curve equals the neutral element.  
This algorithm is formulated as a set of rules which associate an algebraic expression
to a piecewise-linear three-manifold, by chopping the manifold into tetrahedra and
triangles and multiplying the expressions associated to those; these rules are spelled 
out in \cite[Sect.\,1.2.1]{Wi08} and \cite[Sect.\,3.4]{FSV14}.

Concretely, an oriented 3-simplex with edges labeled by elements of the group $K$
with 3-cocycle $\varpi$ stands for a number obtained by evaluating $\varpi$
according to \cite[(3.34),(3.37)]{FSV14}
\eqpic{Figure5}{270}{49} {
  \put(0,0)     {\Includepic{5}
    \put(42,-3.6) {\scriptsize$ k_1^{} $}
    \put(94,37.6) {\begin{turn}{20}\scriptsize$ k_2^{} k_1^{} $\end{turn}}
    \put(82,78.2) {\begin{turn}{42}\scriptsize$ k_3^{} k_2^{} k_1^{} $\end{turn}}
    \put(99,71.5){\begin{turn}{62}\scriptsize$ k_3^{} k_2^{} $\end{turn}}
    \put(108,23.3){\scriptsize$ k_2^{} $}
    \put(133.5,88){\scriptsize$ k_3^{} $}
    \put(158,49)  {$ \longmapsto ~~~ \varpi(k_3,k_2,k_1) \in \complexx , $}
  } }
while a triangle with edges labeled by elements of $L$ with 2-cochain $\vartheta$
evaluates as \cite[(3.36),(3.38)]{FSV14} 
\eqpic{Figure6}{250}{14} {
  \put(0,2)     {\Includepic{6}
    \put(43,-5.5) {\scriptsize$ l_1^{} $}
    \put(82.4,37) {\scriptsize$ l_2^{} l_1^{} $}
    \put(110,22.3){\scriptsize$ l_2^{} $}
    \put(145,15)  {$ \longmapsto ~~~ [\,\vartheta(l_2^{},l_1^{})\,{]}^{-1}_{} \in \complexx . $}
  } }

Note the convention for the order of multiplication of the group elements appearing
in these diagrams. Also recall that we choose $\omega$ such that it satisfies
the identities \eqref{s4symm}, i.e.\ manifestly has the symmetries of the tetrahedron. 
Moreover, we compatibly choose the 2-cochain $\vartheta$ in such a way that it
manifestly has the symmetries of the triangle, i.e.\
  \be
  \vartheta(b,a)= \vartheta(a,a^{-1}b^{-1}) = [\vartheta(a^{-1},b^{-1}) {]}^{-1} .
  \label{theta-syms}
  \ee

In \cite{FSV14} an algorithm is outlined which allows one to obtain the 2-cocycle 
$\tau_{\omega,\theta}$ in \eqref{lght} as the evaluation of a simplicial 3-manifold labeled by elements
of $K$ with certain 2-simplices labeled by elements in $L$ as in the figure \eqref{Figure6}. 
Thereby one evaluates such special 2-simplices as the product of the evaluation of triangles 
as in \eqref{Figure6}, and their 3-dimensional complement as the product of the evaluation of 
3-simplices as in \eqref{Figure5}, using the group homomorphism of $L$ into $K$.  
Applied to the case at hand, we obtain a graphical representation
of the 2-cocycle $\tau_{\omega,\theta}$ in \eqref{lght} as the evaluation of the following
piecewise-linear three-manifold: 
\eqpic{Figures2}{330}{57} {
  \put(0,0)     {\Includepic{s2}
    \put(-5.1,75) {\scriptsize $ \gamma $}
    \put(-4.5,21) {\scriptsize $ \delta $}
    \put(44,-4.3) {\scriptsize $ g_1^{} $}
    \put(44,88.7) {\scriptsize $ g_1^{} $}
    \put(44,41.3) {\scriptsize $ h_1^{} $}
    \put(110,22.5) {\begin{turn}{33.5}\scriptsize $ g_2^{} $\end{turn}}
    \put(110,115.5){\begin{turn}{33.5}\scriptsize $ g_2^{} $\end{turn}}
    \put(110,68)  {\begin{turn}{33.5}\scriptsize $ h_2^{} $\end{turn}}
  }
  \put(159,59) {$ \longmapsto ~~~ \tau_{\omega,\theta}^{}((\gamma,\delta);(g_1^{},h_1^{}),(g_2^{},h_2^{})) $}
}
Here $(\gamma,\delta)\,{\in}\, G \,{\times}\, G$ is an object and
$(g_1,h_1),(g_2,h_2) \,{\in}\, G \,{\times}\, H$ are morphisms of the action groupoid \eqref{G|GG|H},
and the identification of the top and bottom faces of \eqref{Figures2} reflects the fact that
we deal with the category for a circle, while the two different types of circles on the vertices 
indicate that the corresponding edges which connect vertices of the same type are labeled by 
two different types of gauge transformations, corresponding to the two factors in the morphisms 
of the action groupoid. Further, when evaluating the upper half of the three-manifold 
$h_i$ (with $i \eq 1,2$) stand for $\imath_1(h_i) \,{=}\, p_1(\imath(h_i))$, while when evaluating
the lower half they stand for $\imath_2(h_i) \,{=}\, p_2(\imath(h_i))$.
According to \eqref{Figure6} the triangle in the middle plane of \eqref{Figures2}
evaluates to $[\theta(h_2^{},h_1^{}){]}^{-1}$. To evaluate the upper part
we decompose it into three simplices according to 
\eqpic{Figure9}{400}{45} {
  \put(0,0)   {\Includepic{s9d}
  }
  \put(160,41) {=}
  \put(195,30)  {\Includepic{s9e}
  }
  \put(215,20)  {\Includepic{s9f}
  }
  \put(255,-4)  {\Includepic{s9g}
  } }
so that by \eqref{Figure5} we get the number
  \be
  \tau_{\rm top}(\gamma;g_1^{},h_1^{},g_2^{},h_2^{})
 %% = \tilde\omega(\gamma,g_1,g_2) \, \tilde\omega(h_1^{},g_1^{}\gamma h_1^{-1},g_2)^{-1} \,
 %% \tilde\omega(h_1^{},h_2^{},g_2^{}g_1^{}\gamma h_1^{-1}h_2^{-1}) \,,
  = \omega(g_2^{},g_1^{},\gamma) \, \omega(g_2,g_1^{}\gamma h_1^{-1},h_1^{})^{-1} \,
  \omega(g_2^{}g_1^{}\gamma h_1^{-1}h_2^{-1},h_2^{},h_1^{}) \,,
  \ee
where as pointed out above, on the right hand side $h_i$ stands for $\imath_1(h_i)$.
In the same way we get $ \tau_{\rm bottom}(\delta;g_1^{},h_1^{},g_2^{},h_2^{}) 
\eq \tau_{\rm top}(\delta;g_1^{},h_1^{},g_2^{},h_2^{}) ^{-1}_{} $,
where on the right hand side $h_i$ now stands for $\imath_2(h_i)$.

Combining these results we end up with
\be
\hspace*{-.9em}\begin{array}{l}
  \tau_{\omega,\theta} ((\gamma_1^{},\gamma_2^{});(g,h),(g',h'))
  = [\theta(h',h){]}^{-1} \, \tau_{\rm top}(\gamma_1^{};g,h,g',h')\, \tau_{\rm bottom}(\gamma_2^{};g,h,g',h')
  \\{}\\[-8pt]\hspace*{1.8em}
  = [\theta(h',h){]}^{-1} \,
  \\{}\\[-9pt]\hspace*{2.8em}
  \omega(g',g,\gamma_1^{}) \, [\omega(g',g\gamma_1^{}\ie(h)^{-1}_{},\ie(h)) {]}^{-1} \, 
  \omega(g'g\gamma_1^{}\ie(h)^{-1}_{}\ie(h')^{-1}_{},\ie(h'),\ie(h))
  \\{}\\[-8pt]\hspace*{2.7em}
  [\omega(g',g,\gamma_2^{}) {]}^{-1} \, \omega(g',g\gamma_2^{}\iz(h)^{-1}_{},\iz(h)) \, 
  [\omega(g'g\gamma_2^{}\iz(h)^{-1}_{}\iz(h')^{-1}_{},\iz(h'),\iz(h)) {]}^{-1}\,.
  \end{array}
\label{tau_omegatheta}
\ee

Let us summarize: There is an equivalence 
  \be
  \label{lght=GGVectGHtau}
  \lght \,\simeq\, \GGVectGHtau
  \ee
between the twisted linearization  $\lght \,{=}\,
\big[\, G\sll G \,{\times}\, G \srr_{\!\imath^-} H,\Vect \,{\big]}^{\tau_{\omega,\theta}^{}}$
and the category $\GGVectGHtau$ of $G{\times}G$-graded vector spaces of the form \eqref{VGG}
with a left $G \,{\times}\, H$-action as in \eqref{pigh}, twisted by the cocycle \eqref{tau_omegatheta}.

It is also worth noting that the result (\ref{tau_omegatheta}) implies that  
\be
\tau_{\omega,\theta} ((\gamma_1^{},\gamma_2^{});(g,e),(e,h)) = 1 \,.
\ee
This tells us that the action $\pi$ of the product of the groups $G$ and $H$ is related to 
the action of the individual factors as  $\pi_{g,h} \eq \pi_{e,h} \,{\circ}\, \pi_{g,e}$.

%%%%%%%%%%%%%%%%%%%%%%%%%%%%%%%%%%%%%%%%%%%%%%%%%%%%%%%%%%%%%%%%%%%%%%%%%%%%%%%% 

\subsection{Comparison with the algebraic result}

We now compare the results of Section \ref{sec:gauge} with the description of the \ct\ 
of the bimodule category $\Cat{M}(H, \theta)$ in Example \ref{example:DW-algebraic}
(we continue to work with vector spaces over $\complex$, which is conventional
in the context studied in Section \ref{sec:gauge}, but the result holds for any
algebraically closed field $\Bbbk$):

\begin{theorem}
  The category of $A_{\Gdiag}$-$A_{H,\theta}$-bimodules in 
  $\Vect(G)^\omega \boxtimes\Vect(G)^{\omega^{-1}}$ is equivalent to the functor category 
  $\big[\, G\sll G \,{\times}\,G \srr_{\!\imath^-} H,\Vect \,{\big]}^{\tau_{\omega,\theta}^{}} $
  obtained by the gauge-theoretic considerations. 
\end{theorem}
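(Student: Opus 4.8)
The plan is to exhibit an explicit equivalence by transporting structure along the underlying identification of linear categories: forgetting the monoidal and gauge-theoretic structure, $\Vect(G)^\omega\boxtimes\Vect(G)^{\omega^{-1}}$ is just the category of $G{\times}G$-graded vector spaces, which is also the category underlying $\GGVectGHtau$, so by \eqref{lght=GGVectGHtau} it suffices to work with the concrete description $\GGVectGHtau$ instead of the twisted-linearization form in the statement. First I would unravel the left-hand side. An $A_{\Gdiag}$-$A_{H,\theta}$-bimodule $M$ is, as an object of the Deligne product, a $G{\times}G$-graded vector space $M=\bigoplus_{(g_1,g_2)}M_{(g_1,g_2)}$ as in \eqref{VGG}. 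Since $A_{\Gdiag}=\bigoplus_{g\in G}S_{(g,g)}$ from \eqref{Gvtheta} is the internal group algebra of $G$, a left $A_{\Gdiag}$-module structure is the same as a family of isomorphisms $\rho^{l}_{g}\colon M_{(g_1,g_2)}\xrightarrow{\cong}M_{(gg_1,gg_2)}$; the module pentagon forces $\rho^{l}_{g'}\circ\rho^{l}_{g}$ to equal $\rho^{l}_{g'g}$ up to the scalar read off from the associator of $\Vect(G)^\omega\boxtimes\Vect(G)^{\omega^{-1}}$, namely $\omega(g',g,g_1)\,\omega(g',g,g_2)^{-1}$ on $M_{(g_1,g_2)}$. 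Likewise, since $A_{H,\theta}=\bigoplus_{h\in H}S_{(\ie(h)^{-1},\iz(h)^{-1})}$ from \eqref{AHtheta} is the $\theta$-twisted group algebra of $H$ embedded via $\imath$, a right $A_{H,\theta}$-module structure is a family $\rho^{r}_{h}\colon M_{(g_1,g_2)}\xrightarrow{\cong}M_{(g_1\ie(h)^{-1},g_2\iz(h)^{-1})}$ composing up to a scalar that combines the associator of the Deligne product with the twist $\theta$ entering the multiplication of $A_{H,\theta}$. Finally the bimodule constraint is inherited from the associator and only records that the left and right actions commute, which at the level of gradings is the tautology $\big((g,g)(g_1,g_2)\big)\imath(h)^{-1}=(g,g)\big((g_1,g_2)\imath(h)^{-1}\big)$.

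Next I would assemble the two actions into one. Setting $\pi_{g,e}:=\rho^{l}_{g}$, $\pi_{e,h}:=\rho^{r}_{h}$ and $\pi_{g,h}:=\pi_{e,h}\circ\pi_{g,e}$, the commuting left-$G$ and right-$H$ structures combine into a single projective left action of $G{\times}H$ shifting the grading by $V_{(g_1,g_2)}\mapsto V_{(gg_1,gg_2)\imath(h)^{-1}}$, which is exactly the datum \eqref{pigh} of an object of $\GGVectGHtau$ (the relation $\pi_{g,h}=\pi_{e,h}\circ\pi_{g,e}$ matching the remark at the end of Section \ref{sec:gauge}). The crux of the proof is to verify that the resulting $2$-cocycle of $\pi$ coincides with the transgressed cocycle $\tau_{\omega,\theta}$ of \eqref{tau_omegatheta}. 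I would do this by matching the three blocks of \eqref{tau_omegatheta} term by term: when one reorganizes the composite $\pi_{g',h'}\circ\pi_{g,h}$ into the standard form $\pi_{(g'g,\,h'h)}$ times a scalar, the three $\omega$-factors grouped as $\tau_{\rm top}$ are precisely the associators of the first ($\Vect(G)^\omega$) tensor factor produced by that reorganization, the three $\omega^{-1}$-factors grouped as $\tau_{\rm bottom}$ arise in the identical way from the second ($\Vect(G)^{\omega^{-1}}$) factor whose associator is $\omega^{-1}$, and the prefactor $[\theta(h',h)]^{-1}$ is the twist in the product $S_{(\ie(h)^{-1},\iz(h)^{-1})}\cdot S_{(\ie(h')^{-1},\iz(h')^{-1})}$ inside $A_{H,\theta}$. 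Making these identifications exact is where the normalization \eqref{omeganorm} of $\omega$, the tetrahedral symmetries \eqref{s4symm}, and the triangle symmetries \eqref{theta-syms} of $\vartheta$ are used, so that the associator scalars coming from the monoidal structure and the simplicial scalars produced by the graphical calculus agree on the nose, in particular with the order-of-multiplication convention flagged after \eqref{Figure6}. I expect this cocycle comparison to be the main obstacle: conceptually it is just "reassociate and apply the twist", but reconciling every inverse and every group-element ordering between the monoidal-categorical and the simplicial-geometric bookkeeping is delicate, which is exactly why those symmetry normalizations were imposed.

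Once objects are matched, the morphisms are immediate: a bimodule morphism $M\to M'$ is a $G{\times}G$-homogeneous linear map intertwining both the $A_{\Gdiag}$- and the $A_{H,\theta}$-actions, i.e.\ a $G{\times}G$-homogeneous map commuting with the projective $G{\times}H$-action $\pi$, which is precisely a morphism of $\GGVectGHtau$ and hence, via \eqref{lght=GGVectGHtau}, of the functor category in the statement. Thus $M\mapsto(M,\pi)$ is a $\Bbbk$-linear functor that is fully faithful, and it is essentially surjective because the dictionary above runs backwards verbatim: restricting a given $\pi$ to $G{\times}\{e\}$ and to $\{e\}{\times}H$ and using the cocycle identity just established recovers a left $A_{\Gdiag}$- and a right $A_{H,\theta}$-module structure on the underlying graded vector space, compatible with the associator. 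This gives the asserted equivalence of categories; combined with Proposition \ref{prop:AcaMmod} and Example \ref{example:DW-algebraic}, which identify the left-hand category with $\circtensor\Cat{M}(H,\theta)$, it also confirms formula \eqref{cat4circle} for Dijkgraaf--Witten theories.
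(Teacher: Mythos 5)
Your proposal follows essentially the same route as the paper's proof: keep the underlying $G{\times}G$-graded vector space, decompose the $A_{\Gdiag}$- and $A_{H,\theta}$-actions into grading-shifting maps, combine them into a projective $G\times H$-action via $\pi_{g,h}=\pi_{e,h}\circ\pi_{g,e}$, and identify the resulting cocycle with $\tau_{\omega,\theta}$ using the normalizations and symmetries of $\omega$ and $\theta$, with the functor the identity on morphisms. The paper organizes the cocycle comparison as three generating relations (composition of $\rho$, composition of $\ohr$, and their exchange, the last carrying nontrivial $\omega$-factors rather than being a pure grading tautology), but this is the same computation you outline, so the proposal is correct.
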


\begin{proof}
  Invoking the equivalence \eqref{lght=GGVectGHtau}, we need to
  construct an equivalence from the category of $A_{\Gdiag}$-$A_{H,\theta}$-bimodules in 
  $\Vect(G)^\omega \boxtimes\Vect(G)^{\omega^{-1}}$
  to the category $\GGVectGHtau$ of $G{\times}G$-graded vector spaces with
  $G \,{\times}\, H$-action $\pi$ twisted by $\tau_{\omega,\theta}$ described above.
  We first define an equivalence functor on objects. As it turns out, this can be done in
  such a way that for each object the underlying $G{\times}G$-graded vector space 
  $V \,{=}\, \bigoplus_{\gamma_1,\gamma_2\in G}V_{(\gamma_1,\gamma_2)}$ gets mapped to itself:
  the left action $ A_{\Gdiag}\otimes V \,{\to}\, V $ amounts to a linear map
    \be
  \big( \bigoplus_{g\in G}\, \complex_g \,\big) \otimes
  \big( \!\! \bigoplus_{\gamma_1,\gamma_2\in G} V_{(\gamma_1,\gamma_2)} \,\big)
  \longrightarrow \bigoplus_{\gamma'_1\gamma'_2}V_{(\gamma'_1,\gamma'_2)}
    \ee
  and thus to a family of linear maps
    \be
  \complex_g\otimes V_{(\gamma_1,\gamma_2)} \longrightarrow V_{(\gamma'_1,\gamma'_2)} 
    \ee
  for all $g \,{\in}\, G$ which are non-zero only if $(\gamma_1',\gamma_2') \,{=}\, (g\gamma_1,g\gamma_2)$.
  Using the unit constraint for the monoidal unit $\complex$ in the
  category of complex vector spaces, we then obtain a linear map
    \be
  \rho(\gamma_1,\gamma_2;g):\quad V_{(\gamma_1,\gamma_2)}\to V_{(g\gamma_1,g\gamma_2)} \,.
    \ee
  In an analogous manner we get for any $h \,{\in}\, H$ a linear map
    \be
  \ohr(\gamma_1,\gamma_2;h):\quad V_{(\gamma_1,\gamma_2)}\to V_{(\gamma_1,\gamma_2) \iota(h^{-1})} \,. 
    \ee
  The main point is now to show that this defines a projective action of $G \,{\times}\, H$
  for the same 2-co\-cycle on the groupoid $G\sll G \,{\times}\,G \srr_{\!\imath^-} H$ as 
  the one found in Section \ref{sec:gauge}, by setting 
    \be 
  \label{definingActionGH}
  \pi(\gamma_1^{},\gamma_2^{};(g,h)) 
  := \ohr(g \gamma_1^{},g \gamma_2^{};h) \circ \rho(\gamma_1^{},\gamma_2^{};g) \,.
  \ee
  To this end we note that compatibility of $\rho$ with the product of $A_{\Gdiag}$ 
  amounts to the relation
  \be 
  \rho(\gamma_1^{},\gamma_2^{};g'g) 
  = \omega(g',g,\gamma_1^{}) \, [\omega(g',g,\gamma_2^{}) {]}^{-1} \,
  \rho(g\gamma_1,g\gamma_2;g') \circ \rho(\gamma_1,\gamma_2;g)
  \ee
  for all quadruples $g,g',\gamma_1,\gamma_2 \,{\in}\, G$ (compare \cite[(3.55)]{FSV14}).
  Similarly, compatibility of $\ohr$ with the action of $A_{H,\theta}$ gives
  % (compare \cite[(3.56)]{FSV14})
  \be 
  \hspace*{-.4em}\begin{array}{r}
    \ohr(\gamma_1^{},\gamma_2^{};hh') = [\theta(h^{-1},h'{}^{-1}) {]}^{-1} \,
    [\omega(\gamma_1^{},\ie(h)^{-1},\ie(h')^{-1}) {]}^{-1} \, \omega(\gamma_2^{},\iz(h)^{-1},\iz(h')^{-1}) \,
    \\{}\\[-8pt]
    \ohr(\gamma_1^{},\gamma_2^{})\imath(h);h') \circ \, \ohr(\gamma_1,\gamma_2;h) 
  \end{array}
  \ee
  for $\gamma_1,\gamma_2 \,{\in}\, G$ and $h,h' \,{\in}\, H$, while
  the requirement that the two actions commute amounts to 
  \be
  \begin{array}{r}
    \ohr(g\gamma_1^{},g\gamma_2^{};h) \circ \rho(\gamma_1^{},\gamma_2^{};g)
    = \omega(g,\gamma_1^{},\ie(h)^{-1}) \, [\omega(g,\gamma_2^{},\iz(h)^{-1}) {]}^{-1} \hspace*{3.7em}
    \\{}\\[-8pt]
    \rho((\gamma_1^{},\gamma_2^{})\imath(h);g) \circ \ohr(\gamma_1^{},\gamma_2^{};h)
  \end{array}
  \ee
  (compare \cite[(3.57)]{FSV14}).
  By comparison with the formula \eqref{tau_omegatheta} for the groupoid cocycle $\tau_{\omega,\theta}$
  (and also invoking the normalization of $\omega$ and $\theta$), these relations can be rewritten as 
  \be
  \begin{array}{l}
    \rho(\gamma_1^{},\gamma_2^{};g'g) = \tau_{\omega,\theta} ((\gamma_1^{},\gamma_2^{});(g,e),(g',e)) \,
    \rho(g\gamma_1,g\gamma_2;g') \,{\circ}\, \rho(\gamma_1,\gamma_2;g) \,,
    \\{}\\[-7pt]
    \ohr(\gamma_1^{},\gamma_2^{};hh') = \tau_{\omega,\theta} ((\gamma_1^{},\gamma_2^{});(e,h),(e,h')) \,\,
    \ohr(\gamma_1^{},\gamma_2^{})\imath(h);h') \,{\circ}\, \ohr(\gamma_1,\gamma_2;h) \quad \text{and}
    \\{}\\[-7pt]
    \ohr(g\gamma_1^{},g\gamma_2^{};h) \,{\circ}\, \rho(\gamma_1^{},\gamma_2^{};g)
    \\{}\\[-9pt]
    \hspace*{5.6em} = \tau_{\omega,\theta} ((\gamma_1^{},\gamma_2^{});(e,h),(g,e)) \,
    \rho((\gamma_1^{},\gamma_2^{})\imath(h);g) \,{\circ}\, \ohr(\gamma_1^{},\gamma_2^{};h) \,.
  \end{array}
  \ee
In other words, the constraints for the actions of the algebras \eqref{AHtheta} and \eqref{Gvtheta}
are indeed precisely implemented by $\tau_{\omega,\theta}$, implying that the prescription
\eqref{definingActionGH} defines a projective action of $G \,{\times}\, H$ with cocycle
$\tau_{\omega,\theta}$.
  \\[2pt]
Finally, since $A_{\Gdiag}$-$A_{H,\theta}$-bimodule morphisms
are exactly the ones which commute with the projective $G \,{\times}\, H$-action,
we define the functor to be the identity on morphisms, so that the functor is obviously 
fully faithful. It is also clear that the functor is essentially surjective.
\end{proof}

%%%%%%%%%%%%%%%%%%%%%%%%%%%%%%%%%%%%%%%%%%%%%%%%%%%%%%%%%%%%%%%%%%%%%%%%%%%%%%%% 
 
\section{Tricategories with 3-trace}\label{sec4}

\subsection{Coherence for 3-traces}

The purpose of this section is to formalize the structure of the \ct\ in the framework of 
the tricategory $\BimCat$ of bimodule categories. This leads to the concept of a 3-trace $\tr$
on the endomorphism bicategories of a tricategory $\Cat{T}$ with values in a fixed bicategory $\Cat{B}$.

Such a 3-trace associates in particular to every 1-endomorphism $f$ in $\Cat{T}$ an 
object $\tr(f)$ in $\Cat{B}$. In the case of our primary interest, $\Cat{T} \,{=}\, \BimCat$ and 
$\Cat{B} \,{=}\, \Categ$, the bicategory of small categories, and $\tr(\CMC) \,{=}\, \circtensor\Cat{M}$.  
We emphasize that the notion of a 3-trace is applied to Hom-bicategories
and thus does not require the existence of a symmetric monoidal structure on the tricategory $\Cat{T}$. 

A benefit of this more abstract formulation is that it provides a generalization of 
the Drinfeld center and that it allows us to prove a coherence statement for 3-traces:
The trace of a composition of several 1-morphisms comes equipped with cyclic equivalences 
in $\Cat{B}$, which are unique up to unique natural isomorphisms. 
This provides a conceptual background for the value a TFT assigns to a marked circle: 
the trace then depends only on the cyclic order of the 1-morphisms.

Regarding tricategories, we mostly work with their Hom-bicategories,
so that we content us to briefly remark on the coherence data that are used in the sequel,
following the conventions of \cite{Gurski}:
We use the symbol $\Box$ for the composition that is defined on \mbox{1-,} 2- and 3-morphisms
in a tricategory $\Cat T$, and we write $\circ$ for the composition of 2- and 3-morphisms and $\cdot$ 
for the composition of 3-morphisms. The associator in $\Cat{T}$ includes in particular 2-morphisms 
$a_{f,g,h}\colon (f \Box g) \Box h \,{\Rightarrow}\, f \Box (g \Box h)$ for all composable 
triples $f,g,h$ of 1-morphisms. The units include 2-morphisms 
$r_{f}\colon f \Box 1 \,{\Rightarrow}\, f$ and $l_{f}\colon 1 \Box f \,{\Rightarrow}\, f$. Furthermore 
there are invertible 3-morphisms $\mu, \lambda , \rho$ expressing the compatibility of the associator 
with the units, as well as an invertible 3-morphism $\pi$ replacing the identity in the  
pentagon axiom for the associator $a$. These structures possess (weak) inverses, which are
denoted by $a^{-}$, $r^{-}$, $l^{-}$, respectively. There are three axioms relating these structures.

In the sequel we call an $n$-tuple $f_{1},f_2,...\,, f_{n}$ of $1$-morphisms in $\Cat{T}$
\emph{cyclically composable} if the morphisms are composable and, for $f_{1}$ a morphism 
from $x_{1}$ to $x_{2}$, $f_{n}$ is a morphism from $x_{n-1}$ to $x_{1}$.

\begin{definition}
  \label{definition:3-trace}
  Let $\Cat{T}$ be a tricategory and $\Cat{B}$ a bicategory. A \emph{3-trace $\tr$
    on $\Cat{T}$ with values in $\Cat{B}$} is a collection of $2$-functors 
  \begin{equation}
    \tr_{x}:\quad \Cat{T}(x,x) \rightarrow \Cat{B}
  \end{equation}
  for all objects $x$ of $\,\Cat{T}$, together with structural morphisms $\varphi$, $m$ 
  and $\kappa$ as follows:
  \begin{definitionlist}[leftmargin=1.9em, label=\emph{(\roman*)}]
  \item 
    For any pair of objects $x,y \,{\in}\, \Cat{T}$ there is an adjoint equivalence $\varphi(x,y)$ between 
    the induced $2$-functors 
    \begin{equation}
      \label{eq:B-bimodule}
      \begin{tikzcd}[column sep=large]
        & \Cat{T}(x,x) \ar[shorten >=15pt, shorten <=15pt, Rightarrow]{dd}{\varphi(x,y)} \ar[bend left]{dr}{\tr_{x}} 
        & \\
        \Cat{T}(x,y) \times \Cat{T}(y,x) \ar[bend left=25]{ur} \ar[bend right=25]{dr}  & & \Cat{B} ,\\ 
        &  \Cat{T}(y,y) \ar[bend right]{ur}{\tr_{y}}& 
      \end{tikzcd}
    \end{equation}
    where the unnamed arrows are given by the composition in $\Cat{T}$. This means in particular that for all  
    $f \,{\times} g \in \Cat{T}(x,y) \,{\times}\, \Cat{T}(y,x)$ we are given adjoint equivalences 
    \begin{equation}
      \varphi(f,g):\quad \tr_{y}(f \Box g) \xrightarrow{~\simeq~} \tr_{x}(g \Box f)
    \end{equation}
    in $\Cat{B}$.
  \item 
    For any triple of cyclically composable morphisms $f_{1}\colon x\,{\to}\, y$, 
    $f_{2}\colon y \,{\to}\, z$ and $f_{3}\colon z\,{\to}\, x$ there is an invertible modification 
    $m(f_{3},f_{2},f_{1})$ between the following composites of pseudo-natural transformations:
    \begin{equation}
      \label{eq:25-eval}
      \begin{tikzcd}%[column sep=tiny]
        \tr_{x}((f_{3} \Box f_{2}) \Box f_{1}) \ar{dr}{\varphi(f_{3} \Box f_{2},f_{1})} \ar{dd}{\simeq}  
        && \\
        &\tr_{y}(f_{1} \Box (f_{3} \Box f_{2})) \ar{dr}{\simeq}
        & \\
        \tr_{x}(f_{3} \Box (f_{2} \Box f_{1})) \ar{dd}[name=A]{\varphi(f_{3},f_{2} \Box f_{1})}
        &&  \tr_{y}( (f_{1} \Box f_{3})\Box f_{2} ) \ar{dl}[yshift=--2pt]{\varphi(f_{1} \Box f_{3},f_{2})} 
        \ar[shorten <= 40pt, shorten >=40pt,Rightarrow]{ll}[above]{m(f_{3},f_{2},f_{1})}
        \\
        &  \tr_{z}(f_{2} \Box (f_{1}\Box f_{3})) \ar{dl}{\simeq} 
        &\\
        \tr_{z}((f_{2} \Box f_{1} )\Box f_{3}) && 
      \end{tikzcd}
    \end{equation}
    Here the unlabeled arrows are obtained from applying the $2$-functors $\tr$ to 
    the respective associators in $\Cat{T}$.

  \item 
    $\kappa$ is an invertible modification between the composite of pseudo-natural transformations. 
    For all $1$-morphisms $f \colon x \,{\to}\, x$ it consists of an invertible $2$-morphism 
    in $\Cat{B}$ as follows:
    \begin{equation}
      \label{eq:kappa}
      \begin{tikzcd}
        {} &\tr(1 \Box f) \ar{r}[name=A]{\varphi(1,f)} &\tr(f \Box 1) \ar{dr}[yshift=-4pt]{\tr(r_{f}^{-})} &
        \\
        \tr(f)\ar{ur}[yshift=-3pt]{\tr(l_{f})}  \ar{rrr}[name=B, below]{1} &&& \tr(f)\,.
        \arrow[shorten <= 8pt, shorten >=5pt, Rightarrow,to path=(A)-- (B) \tikztonodes]{}{\kappa_{f}}
      \end{tikzcd}
    \end{equation}
  \end{definitionlist}
  These data are required to satisfy the following axioms:
  \begin{definitionlist}
  \item 
    Consider any quadruple $f_1,f_2,f_3, f_4$ of cyclically composable $1$-morphisms in $\Cat{T}$;
    for better readability we abbreviate them just by $1,2,3,4$ and also omit the associators.
    Then the following diagram must be the identity $2$-morphism on the $1$-mor\-phisms
    that are given by the outer arrows:
    \begin{equation}
      \label{eq:axiom-3trace}
      \begin{tikzcd}[column sep=large]
        \tr(4321) \ar{r}{\varphi(432,1)}  \ar[bend left=40]{rr}[name=B]{\varphi(43,21)}
        \ar[bend left=60]{rrr}[name=A]{\varphi(4,321)}   \ar[bend right=60]{rrr}[name=D]{\varphi(4,321)}& 
        \tr(1432) \ar{r}{\varphi(143,2) }   \ar[bend right=40]{rr}[name=C,below]{\varphi(14,32)}  
        \arrow[shorten <= 5pt, shorten >=5pt, Rightarrow,to path=-- (B) \tikztonodes]{}[yshift=-2pt]{m(43,2,1)\,}
        \arrow[shorten <= 23pt, shorten >=18pt, Rightarrow,to path=-- (D) \tikztonodes]{}[left,yshift=-5pt]{m(4,32,1)}
        &
        \tr(2143) \ar{r}{\varphi(214,3)} 
        \arrow[shorten <= 5pt, shorten >=5pt, Rightarrow,to path=-- (C) \tikztonodes]{}[yshift=1pt]{\,m(14,3,2)}
        \arrow[shorten <= 25pt, shorten >=25pt, Rightarrow,to path=-- (A) \tikztonodes]{}[right,yshift=2pt]{\,m(4,3,21)}
        & 
        \tr(3214).  
      \end{tikzcd}
    \end{equation}

  \item  
    The following diagram must provide an adjoint equivalence between $\varphi(f,g)$ and $\varphi(g,f)$:
    \begin{equation}
      \label{eq:adj-equiv-varphi}
      \hspace*{-1.4em}
      \begin{tikzcd}
        \tr(fg) \ar{r}{\tr(l_{f})} \ar{drr}[below,yshift=-2pt]{\varphi(f,g)} \ar[bend left]{rrrrr}[name=A]{1}
        & \tr((1f)g) \ar{r}{\varphi(1f,g)} \ar[bend left]{rrr}[name=B,below]{\varphi(1,fg)} 
        & \tr(g(1f)) \ar{r}[name=C]{\tr(a)} \ar{d}[left]{\tr(l_{f}^{-})} & \tr((g1)f)
        \ar{r}{\varphi(g1,f)} \ar{dl}[above,xshift=-6]{\tr(r_{g}^{-})}& \tr(f(g1)) \ar{r}{\tr(r_{g}^{-})}
        &\tr(fg)\\
        && \tr(gf) \ar{urrr}[below]{\varphi(g,f)}. &&&
        \arrow[shorten <= 8pt, shorten >=5pt, Rightarrow,to path=(A)-- (B) \tikztonodes]{}[xshift=2pt]{\kappa_{fg}^{-1}}
        \arrow[shorten <= 2pt, shorten >=2pt, Rightarrow,to path=(B)-- (C) \tikztonodes]{}[xshift=2pt]{m^{-1}}
      \end{tikzcd}
    \end{equation}
    Here the first and the last of the three triangles on the bottom are filled with $2$-iso\-morphisms 
    from the structure of the $2$-functor $\varphi$, while the middle triangle is filled with a 
    $2$-isomorphism coming from the tricategory $\Cat{T}$.
  \end{definitionlist}
\end{definition}

Note that using diagram \eqref{eq:adj-equiv-varphi}, the modification $\kappa$ induces,
for every 1-endomorphism $f$, an isomorphism in $\Cat{B}$ from $\varphi(f,1)$ 
composed with the units to the identity, as in diagram \eqref{eq:kappa}. Thus the existence 
of such isomorphisms needs not to be imposed separately for a 3-trace in $\Cat{T}$.

The main point of the concept of a 3-trace is that the trace of a composition of an arbitrary 
number of 1-morphisms has automatically a coherent cyclic invariance. To make this precise 
we need some further terminology.

First we define how to implement on a string $f_1,f_2,...\,, f_n$ of $n$ cyclically 
composable morphisms a \emph{bracketing} $b$ of the string and a \emph{cut point} 
$i \,{\in}\,  \{1,2,...\,,n\}$ (which in the composition $f_n{\Box}\cdots{\Box}f_1$ is 
counted from the right): $(b,i)(f_1,f_2,...\,, f_n)$ is the composite 
of 1-morphisms in $\Cat{T}$ that starts with $f_i$ and is bracketed according to $b$. 
As an example consider the bracketing $b \,{=}\, ((-\Box -)\Box -)$ of a string of three 
elements; then $(b,2)(f_{3},f_{2},f_{1})= ((f_{1}\Box  f_{3} )\Box f_{2})$, 
while $(b,3)(f_{3},f_{2},f_{1})= ((f_{2}\Box  f_{1} )\Box f_{3})$.

By a \emph{trace of $n$ cyclically composable morphisms} $f_1,f_2,...\,,f_n$ we refer to 
the value $\tr((b,i)(f_{n},...\,,f_{1}))$ for some choice of bracketing $b$ and cut point $i$.
An \emph{admissible equivalence} between two traces of the string $f_1,f_2,...\,,f_n$ 
is an invertible 1-morphism in $\Cat{B}$ that is a composite of 1-morphisms obtained 
formally from the adjoint equivalence $\varphi$ and the structure of the tricategory $\Cat{T}$. 
As usual in coherence statements, to rule out coincidences of morphisms,  we consider only 
\emph{formally composable} morphisms here, i.e.\ morphisms that come from the free algebraic 
structure of a given kind. The statement that we want to prove is that equivalences between 
traces corresponding to different bracketings
and cut points always exist and are unique up to unique isomorphisms. 

The strategy of the proof is as follows. For fixed $n$ we first define a 2-graph
$\Cat{G}$, which is a  two-dimensional cell complex with vertices given by possible 
bracketings of strings in $n$ free variables and with  a cut point, with 1-cells given 
by formally composable equivalences between traces of $n$ 1-morphisms, and with their
isomorphisms as 2-cells. This allows us to express the statement as a statement about the
connectedness of $\Cat{G}$. Then we reduce to the case that $\Cat{T}$ is a Gray-category, i.e.\ 
a tricategory in which the only non-trivial constraint datum is an invertible 3-morphism
replacing the identity in the interchange law for 1- and 2-morphisms, see e.g.\
\cite{Gurski}. Finally we prove the statement for 3-traces on Gray-categories. 

The formally composable morphisms are described as follows:
  \begin{definition}
\label{definition:2-graph}
For a tricategory $\Cat{T}$ with $3$-trace the \emph{formal $2$-graph $\Cat{G}$ of
traces of a string of length $n$} is defined as follows:
  \begin{itemize}
  \item 
    Vertices $v$ of $\Cat{G}$ are: All words $v$ in the binary operation $\Box$ and the 
    $0$-ary operation $I$ (corresponding to the unit 1-morphisms in $\Cat{T}$), which results 
    in a word $v$ consisting of $|v|$ terms (with $|v|$| the number that the operation $\Box$ 
    appears plus one) and of \emph{length} $n$ (which is $|v|$ minus the number of times that 
    $I$ appears), together with a bracketing $b$ of the $|v|$ terms and a cut point 
    $i \,{\in}\, \{1,2,,...\,, |v|\}$. An example with $i \,{=}\, 2$ is 
    $((-\Box-)\Box -_{c})\Box I$, where the subscript $c$ indicates the position of the cut point. 

  \item 
    $1$-cells of $\Cat{G}$ are one of the following: associators $a$ and their inverses, 
    unit $2$-mor\-phisms $r$ and $l$ and their inverses, with the obvious vertices as source and 
    target as well as the equivalences $\varphi$, which keep the bracketing but shift the cut point.
    The latter $1$-cells start at a bracketing of $|v|$ terms of the form $(b_{1})(b_{2})$, where
    $b_{1}$ and $b_{2}$ are themselves bracketings of $|b_{1}|$ and $|b_{2}|$-terms, respectively. 
    The $1$-cell $\varphi$  shifts the cut point $i$ by $|b_{2}|$, according to
      \begin{equation}
        (((b_1)(b_2)),i) \xrightarrow{~\varphi((b_1),(b_2))~} (((b_2)(b_1)),i{+}|b_2|\,{\bmod}\, n) \,.
      \end{equation}

  \item 
    $2$-cells between two strings of 1-cells are either of the following: 

    The $2$-cells representing the constraint data of $\Cat{T}$, i.e.\ the pentagon cell $\pi$, 
    the cells expressing the compatibility of the units and the associators, $\rho$, $\lambda$ and 
    $\mu$, as well as the $2$-cells that relate the $1$-cells $a$, $r$, $l$ and their inverses to identities.  

    The $2$-cells defined by the modifications $m$ and $\kappa$ in the definition 
    {\rm\ref{definition:3-trace}}
    of a $3$-trace. Furthermore all $2$-cells expressing naturality of the various $1$-cells.
  \end{itemize}
 \noindent
      \end{definition}

Any vertex $v \,{=}\, (b,i)$ of $\Cat{G}$ can be \emph{evaluated}  on a string $f_1,f_2,...\,,f_n$
of $n$ cyclically composable 1-morphisms to give the object 
  \be \label{equation:evaluation-string}
  v(f_{1}, ...\,,f_{n}) = \tr\big((b,i)(f_{1}, ...\,,f_{n}\big)
  \ee
in $\Cat{B}$. (In (\ref{equation:evaluation-string}) there can be additional unit $1$-morphisms 
on the right hand side; these are suppressed in our notation). Moreover, any sequence of composable
1-cells from $v$ to $u$ in $\Cat{G}$ can be evaluated to give a 1-morphism in $\Cat{B}$ from
$v(f_{1}, ...,f_{n})$ to $u(f_{1}, ...,f_{n})$ using the structures that represent the labels
of the 1-cells in $\Cat{G}$. Finally, any sub-2-graph $G$ consisting of a sequence of 2-cells between
two sequences of 1-cells from $v$ to $u$ gives a 2-morphism $\underline{G}(f_1,f_2,...,f_n)$ in
$\Cat{B}$ between the 1-morphisms corresponding to the evaluation of the 1-cells on the boundary of $G$.

To reduce the situation to the case of Gray-categories we need

\begin{lemma}
\label{lemma:trace-functorial}
  The notion of a $3$-trace $\tr_{x}\colon~ \Cat{T}(x,x) \,{\rightarrow}\, \Cat{B}$
  is functorial in the bicategory  $\Cat{B}$ and in the tricategory $\Cat{T}$:
  \begin{itemize}
\item 
  For any $2$-functor $\Psi\colon \Cat{B} \,{\to}\, \Cat{B}'$, composition with $\Psi$ yields 
  a $3$-trace $\Psi \,{\circ}\, \tr$ on $\Cat{T}$ with values in the bicategory $\Cat{B}'$. 
\item
  Given a $3$-functor $\Phi\colon \Cat{T}' \,{\to}\, \Cat{T}$
  {\rm(}see {\rm \cite[Def.\,3.3.1]{Gurski})}, $\Cat{T}'$ is endowed with an induced 
  $3$-trace to $\Cat{B}$ as the composite
  \begin{equation}
    \tr': \quad \Cat{T}'(x',x') \xrightarrow{~\Phi~} \Cat{T}(\Phi(x'), \Phi(x'))
    \xrightarrow{~\tr_{\Phi(x')}~} \Cat{B}.
  \end{equation}
  \end{itemize}
\end{lemma}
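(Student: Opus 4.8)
The plan is to prove Lemma~\ref{lemma:trace-functorial} by simply transporting the defining data of a 3-trace along the given (higher) functors and checking that the axioms survive. Both statements are of the same flavour: one pre- or post-composes the collection of $2$-functors $\tr_x$ with a fixed functor and reassembles the structural morphisms $\varphi$, $m$, $\kappa$ from the originals together with the coherence data of the functor one composes with.

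For the first item, let $\Psi\colon\Cat{B}\to\Cat{B}'$ be a $2$-functor. The composites $\Psi\circ\tr_x\colon\Cat{T}(x,x)\to\Cat{B}'$ are again $2$-functors (composition of $2$-functors is a $2$-functor). I would define the adjoint equivalence $\varphi'(f,g)$ to be $\Psi$ applied to $\varphi(f,g)$, using that a $2$-functor sends adjoint equivalences to adjoint equivalences; similarly the modifications $m'$, $\kappa'$ are obtained by whiskering $m$, $\kappa$ with $\Psi$ and inserting the compositor $2$-cells $\Phi^{\Psi}_{G,H}\colon\Psi(G)\circ\Psi(H)\to\Psi(G\circ H)$ of $\Psi$ where necessary. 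The two axioms (\ref{eq:axiom-3trace}) and (\ref{eq:adj-equiv-varphi}) then follow from their counterparts for $\tr$ after applying $\Psi$: one uses functoriality of $\Psi$ on $2$-cells and the coherence axioms of the $2$-functor $\Psi$ (the pentagon/triangle for its compositors) to rewrite the image of the original commuting diagram into the desired one. I would remark that when $\Cat{B}$ and $\Cat{B}'$ are strict $2$-categories (as $\Categ$ is) and $\Psi$ is a strict $2$-functor, the compositors are identities and there is nothing to insert, so the verification is immediate; only in the fully weak case does one have to track the coherence isomorphisms of $\Psi$.

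For the second item, let $\Phi\colon\Cat{T}'\to\Cat{T}$ be a $3$-functor. Set $\tr'_{x'} := \tr_{\Phi(x')}\circ\Phi_{x',x'}$, where $\Phi_{x',x'}\colon\Cat{T}'(x',x')\to\Cat{T}(\Phi(x'),\Phi(x'))$ is the $2$-functor that $\Phi$ provides on Hom-bicategories; this is a composite of $2$-functors, hence a $2$-functor. Define $\varphi'(f,g)$ to be $\tr$ applied to the comparison $2$-cell of $\Phi$ witnessing $\Phi(f\Box g)\simeq\Phi(f)\Box\Phi(g)$, followed by $\varphi(\Phi(f),\Phi(g))$, followed by the analogous comparison for $g\Box f$ read backwards — i.e.\ one conjugates the original $\varphi$ by the pseudonatural compositor of $\Phi$. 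The modification $m'$ is built from $m$ conjugated by these same compositors together with the $3$-cells of $\Phi$ expressing compatibility of its compositor with the associator; similarly $\kappa'$ uses the $3$-cells relating the $\Phi$-compositor to the unitors. The two 3-trace axioms for $\tr'$ reduce to those for $\tr$ after inserting the coherence $3$-cells of the $3$-functor $\Phi$; this is a diagram chase in the Hom-bicategory $\Cat{T}(\Phi(x'),\Phi(x'))$ using naturality and the coherence laws of $\Phi$.

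The main obstacle is purely bookkeeping rather than conceptual: in the fully weak case of the second item one must assemble the $\varphi'$, $m'$, $\kappa'$ from the $3$-functor's compositor (a pseudonatural transformation with its own invertible $3$-cells) and verify that the two axioms of Definition~\ref{definition:3-trace} hold, which unwinds to a lengthy pasting-diagram argument invoking the coherence axioms of $\Phi$. I would therefore carry out the argument in the regime that suffices for the paper's application — namely reduce to Gray-categories and strict $2$-functors, as indicated in the surrounding text, where compositors are trivial and the checks collapse to observing that $\tr$ applied to the (now strictly functorial) structure of $\Phi$ preserves all the relevant commuting diagrams on the nose. I would note explicitly that this functoriality is exactly what licenses, in the subsequent coherence proof, the reduction of an arbitrary tricategory with $3$-trace to a Gray-category with $3$-trace.
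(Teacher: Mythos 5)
Your recipe for transporting the data---whiskering $\varphi$, $m$, $\kappa$ with $\Psi$ in the first item, and conjugating them by the compositor cells and coherence $3$-cells of $\Phi$ in the second, then deducing the two axioms from those of $\tr$ together with the coherence laws of the functor---is exactly the route the paper takes; its own proof is even terser, simply asserting that the structural data $\varphi'$, $m'$, $\kappa'$ are obtained from those of $\tr$ and the structure of $\Phi$ and that the axioms can then be checked. Up to that point your proposal matches the intended argument.

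The problem is your closing move: you propose to actually carry out the verification only ``in the regime that suffices for the paper's application'', i.e.\ for strict $2$-functors and Gray-categories, where the compositors are trivial. That regime does \emph{not} suffice, and restricting to it is circular. The sole purpose of this lemma in the paper is to license the reductions in the proof of Proposition \ref{proposition:2-graph-connected}: there one passes from an arbitrary bicategory $\Cat{B}$ to a strict one along an equivalence $\Psi$, which is a pseudofunctor with nontrivial compositors, and, via Lemma \ref{lemma:coherence-stable}, from an arbitrary tricategory $\Cat{T}$ to a triequivalent Gray-category $\Cat{T}'$ along a triequivalence $\Phi\colon \Cat{T}'\to\Cat{T}$; this $\Phi$ is a weak $3$-functor whose target is the arbitrary $\Cat{T}$, and the induced trace $\tr'=\tr_{\Phi(x')}\circ\Phi$ is precisely the second item of the present lemma applied in the fully weak situation. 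So one cannot first reduce to Gray-categories and strict functors and then prove the lemma---the reduction presupposes it. The content of the lemma is exactly the weak-case check: assembling $\varphi'$, $m'$, $\kappa'$ from the compositors of $\Psi$, respectively from the compositor pseudonatural transformation and coherence $3$-cells of $\Phi$, and verifying axioms \eqref{eq:axiom-3trace} and \eqref{eq:adj-equiv-varphi} by a pasting-diagram argument using the coherence axioms of the functor. Your first two paragraphs already contain the correct recipe for this; the fix is to drop the final strictness escape and carry that verification out (or honestly sketch it) at full generality.
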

\begin{proof}
  The first statement is clear from the definition of a 3-trace and the composition of 
2-functors as well as the composition of natural 2-transformations and modifications with a 2-functor. 
\\[2pt] 
 Concerning the second part, 
   the structural data $\varphi'$, $m'$ and $\kappa'$ for $\tr'$ are obtained from the 
  corresponding data of $\tr$ and the structure of the 3-functor $\Phi$. It can be checked that the 
  axioms of a 3-trace then follow from the axioms for $\tr$ and the axioms of the 3-functor 
  $\Phi$.
\end{proof}

The technical statement about the coherence of a 3-trace is now the following:

\begin{proposition}
  \label{proposition:2-graph-connected}
 A $3$-trace on a tricategory $\Cat{T}$ is \emph{coherent}:
The cell complex $\Cat{G}$ associated to a tricategory $\Cat{T}$ with $3$-trace is connected 
  and $1$-con\-nected, and its evaluations are $2$-connected. Concretely:
  For any two vertices (of length $n$) and any two $1$-cells there exist 
  formal $2$-graphs connecting them and, moreover, for formal $2$-graphs $G_{1}$ and $G_{2}$
  that start and end at one and the same string of $1$-cells, the $2$-morphisms obtained by 
  evaluation on an arbitrary string $f_1,f_2,...\,,f_n$ satisfy 
  $\underline{G_{1}}(f_1,f_2,...,f_n) \,{=}\, \underline{G_{2}}(f_1,f_2,...,f_n)$.
\end{proposition}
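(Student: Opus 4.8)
The plan is to prove this by a standard coherence-via-Gray-categories strategy, exactly parallel to MacLane-type coherence results. First I would invoke Lemma \ref{lemma:trace-functorial} together with the coherence theorem for tricategories \cite{Gurski}, which says that every tricategory $\Cat T$ is triequivalent to a Gray-category $\Cat T^{\mathrm{gr}}$ via a 3-functor $\Phi$. Pulling back the 3-trace along $\Phi$ (second bullet of Lemma \ref{lemma:trace-functorial}) and using that the evaluations $\underline G(f_1,\dots,f_n)$ are transported along the triequivalence, one reduces the statement to the case in which $\Cat T$ is a Gray-category. In a Gray-category the only non-trivial constraint datum is the interchanger 3-morphism, so all associators $a$, unitors $r$, $l$ and the 3-morphisms $\pi,\mu,\lambda,\rho$ become identities; consequently the only $1$-cells of $\Cat G$ that do anything are the equivalences $\varphi$, and the only non-identity $2$-cells are those coming from $m$, $\kappa$, the interchanger, and naturality of $\varphi$.

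Next I would establish connectedness and $1$-connectedness of $\Cat G$ in the Gray case. For connectedness: given two vertices $(b,i)$ and $(b',i')$ of the same length $n$, since all associators and unitors are available $1$-cells, one can freely re-bracket and freely insert or delete unit terms $I$; and the $\varphi$-cells allow one to change the cut point by the length of a parenthesized right factor, so by first bracketing as $(b_1)(b_2)$ with $|b_2|$ arbitrary one can realize every shift $\bmod\, n$. Composing such moves connects any two vertices. For $1$-connectedness: one must show that any two sequences of $1$-cells between the same vertices are related by a formal $2$-graph. This is where the bulk of the work lies. I would argue it by reducing each $1$-cell path to a normal form — e.g.\ first push all $\varphi$'s to the right past all associator/unitor moves (possible since naturality $2$-cells of $\varphi$ with respect to $a,r,l$ are among the $2$-cells of $\Cat G$), obtaining a path that is ``associativity/unit moves, then a pure string of $\varphi$'s''; then reduce the $\varphi$-string. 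The pentagon-type axiom \eqref{eq:axiom-3trace} for the modification $m$ and the axiom \eqref{eq:adj-equiv-varphi} relating $\varphi(f,g)$ to $\varphi(g,f)$ are precisely the coherence relations that let one commute and cancel adjacent $\varphi$'s; the associativity/unit part is handled by the ordinary MacLane coherence theorem, which applies since those data satisfy the tricategory axioms. The modification $\kappa$ supplies the base case for the $\varphi$-string (the ``full cyclic rotation returns to the identity'' relation).

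The statement about evaluations being $2$-connected, i.e.\ $\underline{G_1}(f_1,\dots,f_n)=\underline{G_2}(f_1,\dots,f_n)$ whenever $G_1,G_2$ have the same boundary, then follows by the same normal-form argument carried out one dimension up: one shows every formal $2$-graph can be rewritten, using the $3$-morphism axioms of the Gray-category and the two displayed axioms (i) and (ii) of Definition \ref{definition:3-trace}, into a canonical one depending only on its boundary. Here one uses that in a Gray-category the relevant $3$-cells are generated by the interchanger, so there is nothing to check beyond the two explicit $3$-trace axioms and naturality; the argument is a routine if lengthy diagram chase, and I would present it schematically rather than drawing every cell. Finally I would transport the conclusion back along $\Phi$ to a general $\Cat T$: evaluation of $1$-cells and $2$-graphs commutes with applying the $3$-functor (up to the coherent isomorphisms built into $\Phi$), so equality of evaluations upstairs gives equality downstairs.

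The main obstacle I expect is the $1$-connectedness step for the pure $\varphi$-string, i.e.\ proving that the cyclic-rotation relations generated by $m$, $\kappa$ and the $\varphi$-symmetry axiom \eqref{eq:adj-equiv-varphi} suffice to identify any two composable chains of $\varphi$'s between the same bracketed-with-cut-point vertices. Morally this is the statement that the ``cyclic group acting by rotation'' has the expected presentation once one keeps track of bracketings, and it is the part of the argument where one genuinely needs axiom \eqref{eq:axiom-3trace} (a hexagon/pentagon for $m$) rather than just formal manipulation. I would isolate this as a lemma: fix the bracketing and track how a chain of $\varphi$'s acts on the cut point, observe that $m$ lets one reassociate which sub-factors are being rotated while \eqref{eq:adj-equiv-varphi} and $\kappa$ handle the interaction with the unitors and the ``rotate by the whole length'' case, and conclude that the resulting groupoid of $\varphi$-chains is contractible. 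Everything else — connectedness, the associator/unit bookkeeping, and the $2$-connectedness of evaluations — is then either classical MacLane coherence or a direct consequence of the Gray-category axioms together with the two axioms of a $3$-trace.
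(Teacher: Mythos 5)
Your proposal follows essentially the same route as the paper's proof: reduce to a strict $\Cat{B}$ and, via Lemma \ref{lemma:trace-functorial} together with stability of coherence under triequivalence, to a Gray-category, where only the cut point and the shift equivalences $\varphi$ survive, and then establish contractibility using the associativity of $m$ coming from axiom \eqref{eq:axiom-3trace}, the adjunction relation \eqref{eq:adj-equiv-varphi}, and $\kappa$ for the full rotation. The ``routine if lengthy diagram chase'' you defer is precisely where the paper invests its effort, carrying it out through an explicit string-diagram calculus (cancellation, $m$-associativity, slide and adjunction moves, elimination of $\kappa$-vertices, and a standard-form/bracketing argument), but your identification of which axiom plays which role matches the published argument.
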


\begin{proof}
  If $\Phi$ is an equivalence of bicategories, then the statement of the proposition  
  holds for the 3-trace $\tr$ if and only if it holds for $\Phi \circ \tr$. Since every 
  bicategory is equivalent to a strict bicategory, i.e.\ a bicategory in which 
  all $2$-morphisms $\omega^{\Cat{B}}_{H,G,F}$, $\lambda_{F}^{\Cat{B}}$ and $\rho^{\Cat{B}}_{F}$ 
  are identities, we can thus assume without loss of generality that $\Cat{B}$ is a strict bicategory. 
  We now prove a Lemma that will allow us to restrict ourselves to
  the case that the tricategory $\Cat{T}$ is a  Gray-category.

  \begin{lemma}
\label{lemma:coherence-stable}
    Let $\Phi\colon \Cat{T}' \,{\to}\, \Cat{T}$ be a triequivalence 
    {\rm (}see {\rm \cite[Def.\,4.4.1]{Gurski})} with pseu\-do-inverse $\Phi^{-}$.
    Then the $3$-trace on $\Cat{T}$ is coherent if and only if the $3$-trace on $\Cat{T}'$ is coherent. 
  \end{lemma}
  \begin{proof}
      Denote by $\underline{G}^{\Cat{T}}$ and $\underline{G}^{\Cat{T}'}$ the evaluations 
  of a 2-graph $G$ using the 3-tra\-ces $\tr$ on $\Cat{T}$ and $\tr'$ on $\Cat{T}'$, respectively. 
  We argue that for two formal 2-graphs $G_{1}$ and $G_{2}$ and a string $f_1,f_2,...,f_n$
  of 1-morphisms as above, the equality
  $\underline{G_{1}}^{\Cat{T}}(f_1,...,f_n) \,{=}\, \underline{G_{2}}^{\Cat{T}}(f_1,...,f_n)$ 
  holds if and only if $\underline{G_{1}}^{\Cat{T}'}(\Phi^{-}(f_{1}),...,\Phi^{-}( f_{n}))
  \,{=}\, \underline{G_{2}}^{\Cat{T}'}(\Phi^{-}(f_{1}),...,\Phi^{-}(f_{n}))$:
  \\
  The evaluation of a 2-graph $G$ using the 3-trace on the 1-morphisms $f_{i}$ in $\Cat{T}$ 
  is a 2-morphism $\underline{G}^{\Cat{T}}$ in $\Cat{B}$ between two 1-morphisms 
  $G^{s}, G^{t}\colon v \,{\to}\, w$. Analogously, the evaluation using the 3-trace on 
  $\Phi^{-}f_{i}$ in $\Cat{T}'$ is  a 2-morphism $\underline{G}^{\Cat{T}'}$ between 1-morphisms
  $G'^{s}, G'^{t}\colon v' \,{\rightarrow}\, w'$. The vertices $v'$ and $w'$ of this evaluation are 
  for example $\tr(\Phi \Phi^{-}(f_{n}) \Box \cdots \Box \Phi \Phi^{-} (f_{1}))$ with some bracketing. 
  Using that $\Phi$ and $\Phi^{-}$ are triequivalences, we obtain invertible 2-morphisms 
  $\Phi \Phi^{-}(f_{i}) \,{\to}\, f_{i}$ in $\Cat{T}$ for all $i \,{\in}\, \{1,2,..., n\}$.  
  Since the 3-trace consists by definition of 2-func\-tors  
  $\Cat{T}(x_{1}, x_{2}) \,{\times} \cdots {\times}\, \Cat{T}(x_{n},x_{1}) \,{\to}\, \Cat{B}$, 
  these yield invertible 1-morphisms $v'\,{\to}\, v$  and  $w' \,{\to}\, w$ in $\Cat{B}$,  
  together with 2-morphisms on the front and back of the diagram
  \begin{equation}
    \label{eq:sqare}
    \begin{tikzcd}[row sep=scriptsize, column sep=scriptsize]
      & v' \arrow{dl}[left,yshift=2pt]{=}\arrow{rr}[name=A, xshift=4]{G'^{s}} \arrow{dd} 
      & & w' \arrow{dl}[yshift=2pt]{=}\arrow{dd} \\
      v' \arrow[crossing over]{rr}[name=B, xshift=13pt]{G'^{t}} \arrow{dd} & & w' \\
      & v\arrow{dl}[left,yshift=2pt]{=}\arrow{rr}[name=C, xshift=-10]{G^{s}} & & w \arrow{dl}[yshift=2pt]{=} \\
      v \arrow{rr}[name=D]{G^{t}} & & w  \arrow[crossing over, leftarrow]{uu}\\
 \arrow[shorten <= 8pt, shorten >=3pt, Rightarrow,to path=(A)-- (B) \tikztonodes]{}[yshift=6pt]{\underline{G}^{\Cat{T}'}}
 \arrow[shorten <= 8pt, shorten >=3pt, Rightarrow,to path=(C)-- (D) \tikztonodes]{}[yshift=4pt]{\underline{G}^{\Cat{T}}}
    \end{tikzcd}
  \end{equation}
  such that this diagram of 2-morphisms in $\Cat{B}$ commutes.
This concludes the proof of the lemma. 
  \end{proof}

Since every tricategory is triequivalent to a Gray-category,
  in the proof of Proposition \ref{proposition:2-graph-connected}
  we can thus assume that $\Cat{T}$ is a Gray-category. In this case 
  the set of formal 2-graphs considerably simplifies.  On the vertices of the  2-graph $\Cat{G}$ 
  we then only deal with a  number $i \,{\in}\, \{1,2,...\,, n\}$ indicating the cut point. 
  The only 1-cells are now shift operators $S_{j}$ for $j \,{\in}\, \{0,1,...\,, n\}$ from a vertex 
  $i$ to a vertex $(i{+}j\,{\bmod}\,n)$ obtained from $\varphi$ and identities.  
  The shifts $S_{0}$ and $S_{n}$ correspond to $\varphi(-,1)$ and to $\varphi(1,-)$, respectively.
  The 2-cells are given by $m_{i,j}\colon S_{i} \,{\circ}\, S_{j} \,{\Rightarrow}\, S_{i+j}$ 
  whenever $i+j\leq n$ and $\kappa\colon S_{n} \,{\Rightarrow}\, 1$ and their inverses.
  The first statement of the proposition is now proven: for any two vertices of length $n$, 
  there exists a sequence of 1-cells connecting them by using the appropriate shift 1-cell. 
  Moreover, the shift by $i{-}j$ provides a canonical 1-cell from a vertex with cut point $i$ 
  to a vertex with cut point $j\,{<}\,i$. It is clear, by the repeated use of $m$ and $\kappa$, 
  that there exists a 2-cell from  any other sequence of 1-cells from $i$ to $j$ 
  to this canonical 1-cell. This implies that $\Cat{G}$ is 1-connected. 
  \\[3pt]
  The coherence statement can be shown invoking a 2-dimensional graphical  calculus. 
  We can encode the information of a 2-graph in a 2-dimensional string diagram in 
  the $(x,y)$-plane that is read from top to bottom: The strings in a string diagram  
  have a regular projection to the $y$-axis and carry labels  $j \,{\in}\, \{0,1,...\,, n\}$ 
  representing the shifts $S_{j}$. 
  There are trivalent vertices which join two strings to one string, representing $m_{i,j}$,
  with $m_{i,j}$ adding the labels of the two strings, and its inverse representing 
  $m^{-1}_{i,j}$. Furthermore, there is a one-valent vertex representing $\kappa$ which 
  starts at a string with label $n$ and ends at the empty string. From such a string diagram 
  we recover a 2-graph starting at a vertex $v$ with cut point $i$ as follows:
  The right-most string on the top determines a vertex $v'$ and a 1-cell labeled 
  with the corresponding shift. Inductively, this fixes vertices and 1-cells for 
  all strings. The vertices of the string diagrams now determine the corresponding 2-cells. 
  \\[3pt]
  Next we introduce moves on a string diagram that do not change the evaluation of the 
  corresponding 2-graph: 
  For strings labeled with $\alpha$ and $\beta$, we write $ \alpha \,{\circ}\, \beta $ to
  indicate that the strings are horizontally neighboring, while the symbol $\cdot$ combines 
  vertically neighboring vertices; with these conventions, the allowed moves are the following,
  \begin{itemize}
  \item 
    canceling of inverse vertices that are vertically neighboring;
  \item 
    associativity moves for $m$ and $m^{-1}$, i.e.\ for $S_{i} \,{\circ}\, S_{j} \,{\circ}\, S_{k}$ 
    with $i{+}j{+}k \,{\leq}\, n$ one has
    $ m_{i+j,k} \cdot (m_{i,j} \circ 1) = m_{i, j+k} \cdot( 1 \circ m_{j,k}) $  
    and analogously for $m^{-1}$;
  \item 
    slide moves
    $(\alpha \circ \id)\cdot (\id \circ\, \beta) \,{=}\, (\id \circ\, \beta) \cdot(\alpha \circ \id)$;
  \item 
    the \emph{adjunction move} from equation (\ref{eq:adj-equiv-varphi}):
    on the string $S_{n-i} \,{\circ}\, S_{i} \,{\circ}\, S_{n-i}$ we have the identity
    \begin{equation}
      \label{eq:adjunction-move}
      (\id_{n-i} \circ\, \kappa) \cdot (\id_{n-i} \circ\, m_{i,n-i})
      = (\kappa \circ \id_{n-i}) \cdot (m_{n-i,i} \circ \id_{n-i}) \,.
      \end{equation}
  \end{itemize}
  We proceed to show that  two string diagrams starting and ending at the same strings can be
  transformed into each other using the moves listed above. This implies the coherence statement. 
  First we wish to reduce the description to the case of string diagrams that do not involve
  any $\kappa$-vertices. To achieve this we use the adjunction move, by which we can move 
  any $\kappa$- or $\kappa^{-1}$-vertex past a string that is on the left of 
  it, thereby introducing new vertices with $m$ and $m^{-1}$. 
  By repeated use of this prescription, and canceling $\kappa$ against $\kappa^{-1}$,
  we can assume that we are left with only one type, say $\kappa$-vertices, which are moreover 
  located to the left and below all other vertices.  Performing this step on all such pairs 
  of diagrams leaves us with the task of proving that any two diagrams $G_{1}$ and $G_{2}$
  from strings $z_{1}$ to $z_{s}$ that involve only trivalent vertices can be transformed 
  into each other. That this is possible is already quite obvious from the associativity 
  rule for $m$; a more formal argument goes as follows.
  Let $z \,{=}\,S_{j_{1}} \,{\circ} \cdots {\circ}\, S_{j_{r}}$ be the string 
  at which the diagram $G_{1}$ starts.
  We can assume that the sum $t_{z} \,{:=}\, \sum_{k=1}^{r} j_{k}$ of labels satisfies 
  $t_z \,{<}\, n$, and we first define a \emph{standard} string diagram $G_{\std}$ from 
  $z$ to $S_{t_{z}}$ (otherwise, if $t_{z} \,{>}\, n$,  there is an analogous definition
  of a standard diagram to $(S_{1})^{t_{z}}$, a composite of shifts by $1$, and the proof
  continues along the same lines exchanging $m$  and $m^{-1}$ in the subsequent arguments).
  Then we prove that every string diagram from $z_{1}$ to $z_{s}$ has the same evaluation 
  as the composite of the standard string diagram  from $z_{1}$ to $S_{t_{z_{1}}}$ with the 
  inverse standard string diagram to $z_{s}$. This implies the general coherence statement. 
  The standard string diagram $G^{\std}$ from $z$ to $S_{t_{z}}$ is defined by using the 
  vertex $m$ to inductively combine the first two strings to a single string, until 
  at last only the string $S_{t_{z}}$ remains. Let $G$ be a string diagram from the 
  string $z_{1} \,{=}\, S_{j_{1}} \,{\circ} \cdots {\circ}\, S_{j_{r}}$ 
  to $z_{s} \,{=}\, S_{d_{1}} \,{\circ} \cdots {\circ}\, S_{d_{p}}$ via intermediate 
  strings $z_2,...\,, z_{s-1}$. By our assumption, $t_{z_{1}}\,{=}\ldots{=}\,t_{z_{s}}$. It
  suffices to show that, using the allowed moves, we can transform $G$ into the string diagram 
  that consists of the standard diagram from $z_{1}$ to $S_{t_{z}}$ composed with the 
  inverse of the standard diagram to $z_{s}$. $G$ might contain vertices labeled by $m$, 
  which we call \emph{$m$-vertices} and $m^{-1}$-vertices representing $m^{-1}$.
  If we agree that an arrow from left to right represents an  $m$-vertex, while an arrow 
  from right to left represents the inverse vertex, we can write the composite of vertices 
  in $G$ as the upper line of the following diagram:
  \begin{equation}
    \label{diag:Gstd}
    \begin{tikzcd}
      z_{1}  \ar{d}{G_{\std}} \ar{r}{} & z_{2} \ar{d}{G_{\std}}  \ar{r}  & \ldots \ar{d}{G_{\std}}
      &  z_{s} \ar{d}{G_{\std}} \ar{l}{}\\
      S_{t} \ar{r}{=}& S_{t} \ar{r}{=} & \ldots  \ar{r}{=} & S_{t}.
    \end{tikzcd}
  \end{equation}
  The horizontal arrows represent all the respective standard diagram. 
  We need to show that the outer rectangle in the diagram \eqref{diag:Gstd} commutes, 
  i.e.\ that the upper row can be transformed into the composite of the lower row and the
  vertical rows. This, in turn, follows if each of the squares in the diagram commutes 
  in this sense. To show the latter, it is sufficient to consider two string diagrams to $S_{t}$ 
  given by $m$-arrows, i.e.\ involving only the trivalent vertices $m$,
  and to show that they can be transformed into each other. This is clear once we 
  realize that the datum of a string diagram with $m$-vertices from 
  $z=S_{j_{1}} \circ \cdots \circ S_{j_{r}}$ to $S_{t}$ is encoded in a bracketing of the string 
  $S_{j_{1}} \circ \cdots \circ S_{j_{r}}$: the bracket $(S_{i} \circ S_{j})$ means 
  that the vertex $m$ is applied to the strings labeled ${i}$ and ${j}$. 
  Now the associativity rule implies that we can change the brackets arbitrarily. 
  This concludes the proof of Proposition \ref{proposition:2-graph-connected}.
\end{proof}

The result just obtained allows one to deduce that the 3-trace of cyclically composable 
1-morphisms $\{f_{i}\}_{i \in \{1,\ldots, n\}}$ depends only on the cyclic set $\{f_{i}\}$.
In the rest of this section we make this statement precise. We first promote 
the 2-graph $\Cat{G}$ from Definition \ref{definition:2-graph} to a bicategory 
$\Cat{X}$, such that the  evaluations (\ref{equation:evaluation-string}) of $\{f_{i}\}$ 
constitute a 2-functor $\mathsf{E}\colon \Cat{X} \,{\to}\, \Cat{B}$. 
Proposition \ref{proposition:2-graph-connected} shows that $\Cat{X}$ is contractible: 
it is biequivalent to a bicategory with just one object, one 1-morphism and one 2-morphism. 
It follows that the 2-limit of the evaluation $\mathsf{E}$ depends 
only on the cyclic set $\{f_{i}\}$.

A 2-limit of a 2-functor $\mathsf{E}\colon \Cat{X} \,{\to}\, \Cat{B}$ (see e.g.\
\cite[p.\,153]{Street2fun}), 
consists of an object $\lim \mathsf{E} \,{\in}\, \Cat{B}$ together with an equivalence 
\begin{equation}
  \label{eq:2-lim}
  \Cat{B}(b, \lim \mathsf{E}) \simeq [\Cat{X},\Cat{B}](\underline{b}, \mathsf{E}).
\end{equation}
of categories for all $b \,{\in}\, \Cat{B}$.
Here $[\Cat{X},\Cat{B}]$ is the  bicategory of 2-functors from $\Cat{X}$ to $\Cat{B}$ and
$\underline{b}\colon \Cat{X} \,{\to}\, \Cat{B}$ is the constant 2-functor to $b \,{\in}\, \Cat{B}$.
In \eqref{eq:2-lim} $\Cat{X}$ is typically a bicategory that encodes diagrams of a given shape. 
In our context, $\Cat{X}$ is based on the 2-graph $\Cat{G}$ from Definition \ref{definition:2-graph}:
Note first that each 1- and 2-cell of $\Cat{G}$ comes with a notion of source and target, 
hence it makes sense to speak about composable cells in $\Cat{G}$. 
The objects of $\Cat{X}$ are the vertices of $\Cat{G}$. 
The 1-morphisms are strings of composable  1-cells of $\Cat{G}$. A 2-morphism  between 
two strings of 1-cells is a string of composable 2-cells in $\Cat{G}$ up to equivalence. 
The equivalences of composable 2-cells are induced by the axioms of a tricategory and 
the axioms (\ref{eq:axiom-3trace}),~(\ref{eq:adj-equiv-varphi}) of a 3-trace. 

\begin{theorem}
  \label{theorem:coherence-3-tr}
 Let $\Cat{T}$ be a $3$-category with a $3$-trace $\tr$, and let $\{f_{i} \}_{i \in \{1,\ldots, n\}}$ 
 be a set of cyclically composable $1$-morphisms. If it exists, the $2$-limit of the $3$-traces of the 
 $\{f_{i}\}$ under the admissible equivalences depends only on the cyclic set $\{f_{i}\}$.
\end{theorem}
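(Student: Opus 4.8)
The strategy is to combine the contractibility of $\Cat{X}$ from Proposition~\ref{proposition:2-graph-connected} with the observation that the bicategory $\Cat{X}$ --- being assembled from the formal $2$-graph $\Cat{G}$ of Definition~\ref{definition:2-graph} --- depends only on the length $n$, so that the entire dependence on the $1$-morphisms is carried by the evaluation $2$-functor $\mathsf{E} \,{=}\, \mathsf{E}_{(f_{1},\dots,f_{n})}\colon \Cat{X} \,{\rightarrow}\, \Cat{B}$ of \eqref{equation:evaluation-string}. First I would record that, by Proposition~\ref{proposition:2-graph-connected}, $\Cat{X}$ is biequivalent to the terminal bicategory $\mathbf{1}$ (one object, one $1$-morphism, one $2$-morphism), a biequivalence being the inclusion $\iota_{v}\colon \mathbf{1} \,{\rightarrow}\, \Cat{X}$ of any chosen vertex $v$. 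Precomposition with $\iota_{v}$ induces an equivalence of functor bicategories $[\Cat{X},\Cat{B}] \,{\simeq}\, [\mathbf{1},\Cat{B}] \,{\cong}\, \Cat{B}$ which sends the constant $2$-functor $\underline{b}$ to $b$ and $\mathsf{E}$ to the object $\mathsf{E}(v)$; hence $[\Cat{X},\Cat{B}](\underline{b},\mathsf{E}) \,{\simeq}\, \Cat{B}(b,\mathsf{E}(v))$, naturally in $b \,{\in}\, \Cat{B}$. Comparing this with the defining equivalence \eqref{eq:2-lim} and invoking the bicategorical Yoneda lemma, it follows that whenever the $2$-limit $\lim\mathsf{E}$ exists it is equivalent to $\mathsf{E}(v)$; by the $1$- and $2$-connectedness of $\Cat{G}$ asserted in Proposition~\ref{proposition:2-graph-connected}, moreover, this equivalence is canonical up to the admissible equivalences, and in particular independent of the vertex $v$.

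It then suffices to check that $\mathsf{E}(v)$, for a suitable vertex $v$, is unchanged when the string $\vec{f} \,{=}\, (f_{1},\dots,f_{n})$ is replaced by a cyclic rotation $\vec{g} \,{=}\, (f_{k+1},\dots,f_{n},f_{1},\dots,f_{k})$. A vertex $v \,{=}\, (b,i)$ of $\Cat{G}$ is a bracketing $b$ together with a cut point $i$, and its evaluation $\mathsf{E}_{\vec{f}}(b,i) \,{=}\, \tr\big((b,i)(\vec{f})\big)$ depends on $\vec{f}$ only through the $1$-morphism $(b,i)(\vec{f})$ of $\Cat{T}$, which is the cyclic sequence of the $f_{j}$ that starts at $f_{i}$ and is bracketed by $b$. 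Choosing the cut point $i' \,{=}\, i\,{-}\,k \bmod n$ of the rotated string $\vec{g}$, the composites $(b,i)(\vec{f})$ and $(b,i')(\vec{g})$ are literally the same $1$-morphism of $\Cat{T}$, so that $\mathsf{E}_{\vec{f}}(b,i) \,{=}\, \mathsf{E}_{\vec{g}}(b,i')$; together with the previous paragraph this yields $\lim\mathsf{E}_{\vec{f}} \,{\simeq}\, \mathsf{E}_{\vec{f}}(b,i) \,{=}\, \mathsf{E}_{\vec{g}}(b,i') \,{\simeq}\, \lim\mathsf{E}_{\vec{g}}$, which is the claim. More conceptually, the uniform cut-point shift $(b,i) \,{\mapsto}\, (b,\,i{-}1\bmod n)$ defines an automorphism $\Sigma$ of $\Cat{G}$, hence of $\Cat{X}$, with $\mathsf{E}_{\vec{f}} \,{=}\, \mathsf{E}_{\vec{g}}\,{\circ}\,\Sigma^{k}$; since $\Sigma$ is in particular a biequivalence, precomposition with it preserves $2$-limits and the above equivalence is obtained coherently.

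The two places that need genuine $2$-categorical care, and which I would regard as the main obstacle, are the following. First, one must establish that a $2$-limit in the weak sense of \eqref{eq:2-lim} is invariant under biequivalence of the shape $\Cat{X}$ (and under pseudonatural equivalence of the diagram $\mathsf{E}$); this comes from transporting the representing equivalence \eqref{eq:2-lim} along the equivalence of functor bicategories $[\Cat{X},\Cat{B}] \,{\simeq}\, [\Cat{X}',\Cat{B}]$ that a biequivalence $\Cat{X}' \,{\rightarrow}\, \Cat{X}$ induces, together with the bicategorical Yoneda lemma (cf.~\cite{Street2fun}). Second, in the conceptual variant one must verify that the uniform cut-point shift $\Sigma$ is genuinely an automorphism of the $2$-graph $\Cat{G}$, i.e.\ that it is compatible with \emph{every} labelled $1$-cell (the equivalences $\varphi$, which already shift the cut point by $|b_{2}|$, as well as the associators and the unit $1$-cells) and \emph{every} labelled $2$-cell of Definition~\ref{definition:2-graph}; the essential point here is that the modification $m$ of \eqref{eq:25-eval} --- whose three instances in a single diagram involve three distinct cut points at once --- and the modification $\kappa$ are defined uniformly in the cut point, so that shifting all cut points by a common amount commutes with them. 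Granting these two facts, the remaining identifications are routine bookkeeping, and the theorem follows.
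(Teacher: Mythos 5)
Your proposal is correct and takes essentially the same route as the paper: both arguments rest on the contractibility of $\Cat{X}$ obtained from Proposition \ref{proposition:2-graph-connected}, which identifies the $2$-limit (up to equivalence) with the evaluation at a single vertex, together with the observation that suitably matched vertices (bracketing plus shifted cut point) of cyclically rotated strings evaluate to literally the same object of $\Cat{B}$. You merely make explicit the transport of the $2$-limit along the biequivalence $\Cat{X}\simeq\mathbf{1}$ and the cut-point matching, which the paper's terse proof leaves implicit in the phrase that the equivalence classes in $\Cat{X}$ encode the cyclic set.
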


\begin{proof}
The evaluations of the 1-morphisms $\{f_{i}\}$ define a 2-functor 
$\mathsf{E}\colon \Cat{X} \,{\to}\, \Cat{B}$. 
With the 2-limit under admissible equivalences we mean the 2-limit $\lim \mathsf{E}$ of this 2-functor. 
Proposition \ref{proposition:2-graph-connected} implies that the bicategory $\Cat{X}$ is contractible: The 
functor that sends an object  to its equivalence class in $\Cat{X}$, a 1-morphism to its isomorphism 
class and a 2-morphism to the identity is a biequivalence. The equivalence classes in $\Cat{X}$ 
encode precisely the cyclic set associated to $\{f_{i}\}$. It follows that also the 2-limit of 
the evaluations only depends on this cyclic set. 
\end{proof}

We can thus denote the 2-limit of the 3-traces of a set of cyclically composable 1-morphisms $\{f_{i}\}$ by 
$\tr(\{f_{i}\})$. 

\medskip

We expect that every tricategory $\Cat{T}$ which has in addition a symmetric monoidal structure 
and in which all objects are 1-du\-a\-li\-zable has a canonical 3-trace with values in 
the bicategory $\Cat{T}(\unit, \unit)$ where $\unit$ is the unit of the symmetric monoidal structure. 

We are mostly interested in the case that $\Cat{B} \,{=}\, \Categ$, the 2-category of 
essentially small linear categories. We call a 3-trace on $\Cat{T}$ to $\Categ$ 
\emph{representable} if for every object $x$ in $\Cat{T}$ there exists a 1-morphism 
$X\colon x \,{\to}\, x$ together with a natural 2-isomorphism from the 2-functor 
$\tr_{x}$ to the 2-functor $\Cat{T}(X,-)\colon \Cat{T}(x,x) \,{\to}\, \Categ$. 

The 3-category $\BimCat$ is special in the sense that it belongs to both subclasses of 
tricategories: On the one hand it is expected to possess a symmetric monoidal structure, given by 
the Deligne product with unit $\Vect$, for which every object is 1-dualizable \cite{DSS}. Note that
$\BimCat(\Vect, \Vect)$ is the sub-2-category of $\Categ$ consisting of finite linear categories. 
On the other hand, according to Proposition \ref{proposition:circtens-2-fun}\,(\rm ii)  
the 2-functors $\circtensor$ (which define a 3-trace, as we will see next) are representable 
by the bimodule categories $\tCCC$ for every finite tensor category $\Cat{C}$.

\medskip

In a tricategory $\Cat{T}$ with 3-trace with values in $\Cat{B}$, every object 
$x \,{\in}\, \Cat T$ has, via the identity $1$-morphisms in $\Cat{T}$, an associated object 
  \begin{equation}
    \cent_{\Cat{B}}(x) := \tr_{x}(1_{x}) 
  \end{equation}
in $\Cat B$, which may be called a generalized Drinfeld center of the object $x \,{\in}\, \Cat T$. 

\begin{remark}
  In case the tricategory $\Cat{T}$ has duals for 1-morphisms, these duals yield canonical 
  1-morphisms in $\Cat{B}$ between the generalized Drinfeld centers as follows:
  Consider a 1-morphism  $M\colon a \,{\to}\, b$. A dual for $M$ is a 1-morphism
  $M^{\#}\colon b \,{\to}\, a$ in $\Cat{T}$ together with 
  2-morphisms $\ev{M}\colon M^{\#} \Box M \,{\Rightarrow}\, 1_{a} $ and  
  $\coev{M}\colon 1_{b} \,{\Rightarrow}\, M \Box M^{\#}$ such that the snake identity holds 
  in $\Cat{T}$ up to an invertible 3-morphism. These data allow us to define the 1-morphism
  \begin{equation}
    \label{eq:adj-centers}
    \mathsf{F}= \tr_{a}(\ev{M}) \circ \varphi(M,M^{\#}) \circ \tr_{b}(\coev{M})
  \end{equation}
  between  $\cent_{\Cat{B}}(b)$ and $\cent_{\Cat{B}}(a)$.
  In case there are even duals for the 2-morphisms in $\Cat{T}$, taking the (right, say) 
  dual of the evaluation and coevaluation morphisms of $M$, we obtain an adjunction 
  in $\Cat{B}$ between the generalized Drinfeld centers. 
  In case $M$ is invertible, the adjunction is an adjoint equivalence. 

  For the example of bimodule categories, this leads to adjunctions between the twisted 
  Drinfeld centers for every separable bimodule category between finite tensor categories. 
\end{remark}

%%%%%%%%%%%%%%%%%%%%%%%%%%%%%%%%%%%%%%%%%%%%%%%%%%%%%%%%%%%%%%%%%%%%%%%%%%%%%%%% 

\subsection{The \ct\ as 3-trace} \label{sec:categ-assoc-circl}

In this subsection we show that the \ct\ introduced in Section \ref{sec3} provides a 3-trace on the 
tricategory $\BimCat$ with values in the bicategory $\Categ$. In particular we establish compatibility 
as in Definition \ref{definition:3-trace} with the relative tensor product 
of bimodule categories, which is the composition of 1-morphisms in this case. 

As a new ingredient, we need the notion of multi-balancedness of a functor:
\begin{definition}
  \label{definition:multi-bal}
  Let $\Cat{C}$ and $\Cat{D}$ be finite tensor categories and $\Cat{A}$ a linear category. 
  A functor $\mathsf{F}\colon \CMD {\times} \DNC \,{\rightarrow}\, \Cat{A}$ is called 
  \emph{multi-balanced} if it is both $\Cat{D}$-balanced in the sense of
  Definition \ref{definition:balanced-functors} and $\Cat{C}$-balanced in the sense of
  Definition \ref{definition:circular-balanced}, i.e.\ if it is equipped with natural isomorphisms 
  \begin{equation}
    \label{eq:C-mulit-bal}
    \beta_{c, m\times n} :\quad
    \mathsf{F}(c \act m \times n) \stackrel{\cong}{\longrightarrow} \mathsf{F}(m \times n \ract c)
  \end{equation}
  for all $c \,{\in}\, \Cat{C}$ and 
  \begin{equation}
    \label{eq:D-bal}
    \gamma_{d, m \times n} :\quad
    \mathsf{F}( m \ract d \times n) \stackrel{\cong}{\longrightarrow} \mathsf{F}(m \times d \act n)
  \end{equation}
  for all $d \,{\in}\, \Cat{D}$ that satisfy the respective pentagon and triangle identities, and if
  furthermore the two balancing constraint are compatible, i.e.\ the diagrams 
  \begin{equation}
    \label{eq:mulit-bal-comm-diag}
    \begin{tikzcd}[column sep=large] 
      { \begin{array}{r}
         \mathsf{F}((c \act m) \ract d \times n)  \quad \\
          \cong \mathsf{F}(c \act( m \ract d) \times n )   \! \end{array}}
      \ar{r}{\gamma_{d, c\act m \times n}}
      \ar{d}{\beta_{c, m \ract d \times n}} &\mathsf{F}(c \act m \times d \act n)
      \ar[left]{d}{\beta_{c, m \times d \act n}}
      \\
      \mathsf{F}(m \ract d \times n \ract c) \ar{r}{\gamma_{d, m \times n \ract c}}
      & \mathsf{F}(m \times d \act n \ract c)
    \end{tikzcd}
  \end{equation}
  commute for all objects $c \,{\in}\, \Cat{C}$, $d \,{\in}\, \Cat{D}$, $m \,{\in}\, \CMD$,
  $n \,{\in}\, \DNC$. 
  \\[2pt]
  More generally, one can consider strings of cyclically composable  bimodule categories
  $\underline{\Cat{M}}$: These are composable bimodule categories, such that for the outermost 
  bimodule categories, the respective outer tensor categories agree. Then multi-balanced
  functors and natural transformations from $\underline{\Cat{M}}$ to $\Cat{A}$ are defined
  analogously, with the consistency condition $\eqref{eq:mulit-bal-comm-diag}$ required
  between any two consecutive arguments of a multi-balanced functor. We denote the category
  formed by these by $\Funbal(\underline{\Cat{M}},\Cat{A})$, and the category of those
  multi-balanced functors which are in addition right exact by
  $ \Funbalre(\underline{\Cat{M}}, \Cat{A})$.
\end{definition} 

The \ct\ can be extended to the  \ct\ of a cyclically composable string of bimodule categories:
\begin{definition}
  \label{definition:string-circular-tensor}
  Given a  cyclically composable  
  string $\underline{\Cat{M}}$ of bimodule categories over finite tensor categories,
  a \emph{\ct} $\circtensor \, \underline{\Cat{M}}$
  is an abelian category that is universal with respect to multi-balanced functors, i.e.\ for
  which there exists a multi-balanced functor
  \begin{equation}
    \mathsf{B}\colon\quad \underline{\Cat{M}} \rightarrow \circtensor \, \underline{\Cat{M}}
  \end{equation}
  such that for every linear category $\Cat{A}$, composition with $\mathsf{B}$ is
  endowed with the structure of an adjoint equivalence 
  \begin{equation}
    \label{eq:adj-equ-multi-bal}
    \Funre( \circtensor \, \underline{\Cat{M}}, \Cat{A}) \simeq \Funbalre(\underline{\Cat{M}}, \Cat{A})
  \end{equation}
  of categories.
\end{definition}

\begin{proposition}
  \label{proposition:existence-circular-tensor}
  For any cyclically composable string $\underline{\Cat{M}}$ of bimodule categories over 
  finite tensor categories, the \ct\ $\circtensor \, \underline{\Cat{M}}$ exists.
\end{proposition}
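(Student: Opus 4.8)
The plan is to reduce the existence of the multi-balanced \ct\ of a cyclically composable string $\underline{\Cat M}=(\Bim{\Cat C_1}{\Cat M_1}{\Cat C_2},\Bim{\Cat C_2}{\Cat M_2}{\Cat C_3},\dots,\Bim{\Cat C_n}{\Cat M_n}{\Cat C_1})$ to the already-established existence results for the relative tensor product (Proposition following Definition \ref{definition:tensor-prod}) and for the \ct\ of a single bimodule category (Theorem \ref{thm:models-circ-tensor}). The key observation is that a multi-balanced functor out of $\underline{\Cat M}$ is, on the nose, a $\Cat C_1$-balanced functor in the sense of Definition \ref{definition:circular-balanced} out of the single $(\Cat C_1,\Cat C_1)$-bimodule category obtained by composing the string with the relative tensor product, namely $\Cat M_1\boxtensor{\Cat C_2}\Cat M_2\boxtensor{\Cat C_3}\cdots\boxtensor{\Cat C_n}\Cat M_n$. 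More precisely, I would proceed as follows.

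First, I would compose the inner legs: set $\Cat M:=\Bim{\Cat C_1}{\Cat M_1}{\Cat C_2}\boxtensor{\Cat C_2}\Bim{\Cat C_2}{\Cat M_2}{\Cat C_3}\boxtensor{\Cat C_3}\cdots\boxtensor{\Cat C_n}\Bim{\Cat C_n}{\Cat M_n}{\Cat C_1}$, which is a well-defined $(\Cat C_1,\Cat C_1)$-bimodule category by iterated application of the relative tensor product (the associativity/coherence of the iterated relative tensor product being part of the tricategorical structure from \cite{PivThree}). Let $\mathsf B^{\mathrm{rel}}\colon \underline{\Cat M}\to \Cat M$ be the composite of the universal $\Cat C_i$-balanced functors; by the universal property defining the relative tensor products, precomposition with $\mathsf B^{\mathrm{rel}}$ induces, for every linear $\Cat A$, an adjoint equivalence between $\Funre(\Cat M,\Cat A)$ together with a $\Cat C_1$-balancing (in the bimodule sense of Definition \ref{definition:circular-balanced}) and the category of right exact functors out of $\underline{\Cat M}$ that are $\Cat C_i$-balanced for $i=2,\dots,n$ and carry a compatible $\Cat C_1$-balancing on the outer legs; unwinding the compatibility diagram \eqref{eq:mulit-bal-comm-diag} at the leftmost juncture, the latter is precisely $\Funbalre(\underline{\Cat M},\Cat A)$ once we also impose the $\Cat C_1$-balancing. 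Concretely, there is an equivalence $\Funbalre(\underline{\Cat M},\Cat A)\simeq\Funbalre(\CMC[1],\Cat A)$ natural in $\Cat A$, where $\CMC[1]$ denotes $\Cat M$ viewed as a $\Cat C_1$-bimodule category and the right-hand side uses balancedness in the sense of Definition \ref{definition:circular-balanced}.

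Then I would apply Theorem \ref{thm:models-circ-tensor} to the single bimodule category $\CMC[1]$: the \ct\ $\circtensor \CMC[1]$ exists, together with its universal balanced functor $\mathsf B_{\Cat M}\colon \CMC[1]\to\circtensor\CMC[1]$ and the specified adjoint equivalences $\Funre(\circtensor\CMC[1],\Cat A)\simeq\Funbalre(\CMC[1],\Cat A)$. Composing the two adjoint equivalences (using that adjoint equivalences compose to adjoint equivalences, and that the composite of $\mathsf B_{\Cat M}$ with $\mathsf B^{\mathrm{rel}}$ is a multi-balanced functor $\underline{\Cat M}\to\circtensor\CMC[1]$) yields for every linear category $\Cat A$ an adjoint equivalence $\Funre(\circtensor\CMC[1],\Cat A)\simeq\Funbalre(\underline{\Cat M},\Cat A)$ realized by composition with $\mathsf B_{\Cat M}\circ\mathsf B^{\mathrm{rel}}$. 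Setting $\circtensor\underline{\Cat M}:=\circtensor\CMC[1]$ and $\mathsf B:=\mathsf B_{\Cat M}\circ\mathsf B^{\mathrm{rel}}$ then verifies Definition \ref{definition:string-circular-tensor}, and uniqueness up to unique adjoint equivalence follows as usual from the universal property.

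The main obstacle I anticipate is the bookkeeping in the first step: one must check carefully that the compatibility condition \eqref{eq:mulit-bal-comm-diag} between the outermost legs of $\underline{\Cat M}$ corresponds under $\mathsf B^{\mathrm{rel}}$ exactly to the pentagon/triangle axioms of a $\Cat C_1$-balancing on $\Cat M$ in the sense of Definition \ref{definition:circular-balanced}, with no extra or missing coherence data, and that this correspondence is natural in $\Cat A$ and respects right exactness. This is a diagram chase that is conceptually routine but notation-heavy, especially keeping track of the double-dual twisting that enters through $\Tau$ when the outer tensor categories are identified; I would organize it by first treating $n=2$ (where $\underline{\Cat M}=(\CMD,\DNC)$ and $\Cat M=\CMD\boxtensor{\Cat D}\DNC$ is manifestly a $\Cat C$-bimodule category) and then inducting on $n$, at each stage absorbing one more leg into the relative tensor product. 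The remaining steps are then immediate from the cited existence theorems.
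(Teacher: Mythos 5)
Your proposal is correct and follows essentially the same route as the paper: compose the inner legs with the relative tensor product so that multi-balanced functors out of the string correspond to balanced functors (in the sense of Definition \ref{definition:circular-balanced}) out of a single bimodule category over the outer tensor category, and then invoke the existence of the \ct\ for that single bimodule category from Theorem \ref{thm:models-circ-tensor}. The paper simply carries this out for a representative string of length three with a chosen bracketing and cut point, leaving the general case and the coherence bookkeeping you describe as analogous.
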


\begin{proof}
  We consider the example of the string $\DMC {\times} \CNE \,{\times} \EKD$. The general case is 
  treated analogously. First we choose a bracketing, say $(-\Box-)\Box-$ and a cut point, say $\EKD$.
  The relative tensor product provides for all linear categories $\Cat{A}$ an equivalence of categories 
  \begin{equation}
    \label{eq:rel-tens}
    \Funbalre(\DMC {\times} \CNE {\times} \EKD, \Cat{A})
    \cong \Funbalre( (\DMC \boxtensor{\Cat{C}}  \CNE ) \boxtensor{\Cat{E}} \EKD, \Cat{A}).
  \end{equation}
  It follows that the trace $\circtensor  (\DMC \boxtensor{\Cat{C}}  \CNE ) \boxtensor{\Cat{E}} \EKD $ 
  satisfies the required universal property. 
\end{proof}

\medskip

In the application to TFT that was described in Section \ref{sec:5}, 
this category will be assigned to a circle with $n$ marked points
at which surface defects labeled by the 1-morphisms are located.
It thus provides the labels for generalized Wilson lines at which $n$ surface defects meet.

Next  we establish cyclic equivalences of the \ct, thereby in particular justifying our terminology.

\begin{proposition}
  \label{proposition:circ-equiv-1}
  Let $\CMD$ and $\DNC$ be  bimodule categories over finite tensor categories $\Cat{C}$ and $\Cat{D}$. 
  \begin{propositionlist}
  \item[{\rm (i)}] 
    Switching the arguments in a bilinear functor induces for all linear categories $\Cat{A}$ an equivalence 
    \begin{equation}
      \label{eq:induced-equiv-S}
      \Funbalre(\CMD \times \DNC, \Cat{A}) 
      \xrightarrow{~\widetilde{\varphi}_{\Cat{M},\Cat{N}}}~ \Funbalre(\DNC \times \CMD, \Cat{A}),
    \end{equation}
    that satisfies 
    $\widetilde{\varphi}_{\Cat{N},\Cat{M}} \circ \widetilde{\varphi}_{\Cat{M},\Cat{N}} = \id$. 
  \item[{\rm (ii)}] 
    The equivalence from part {\rm (i)} induces an equivalence
    \begin{equation}
      \label{eq:circular-equiv}
      \circtensor \CMD \boxtensor{\Cat{D}} \DNC 
      \xrightarrow{~{\varphi}_{\Cat{M},\Cat{N}}}~ \circtensor \DNC \boxtensor{\Cat{C}} \CMD,
    \end{equation}
    that is unique up to unique natural isomorphism, together with an equivalence 
    ${\varphi}_{\Cat{N},\Cat{M}} \circ {\varphi}_{\Cat{M},\Cat{N}} \simeq \id$. 
  \item[{\rm (iii)}] 
    For bimodule categories $\DMC {\times} \CNE \,{\times} \EKD$, the following  diagram of equivalences commutes:
    \begin{equation}
      \label{eq:comm-equiv}
      \begin{tikzcd}[column sep=large]
        \Funbalre( \DMC {\times} \CNE \, {\times} \EKD, \Cat{A})
        \ar{d}{\widetilde{\varphi}_{\Cat{M}, \Cat{N} \times \Cat{K}}}
        \ar{r}{ \widetilde{\varphi}_{\Cat{M} \times \Cat{N}, \Cat{K}}}  &
        \Funbalre( \EKD {\times} \DMC \, {\times} \CNE, \Cat{A})
        \ar{dl}{\widetilde{\varphi}_{\Cat{K} \times \Cat{M}, \Cat{N}}} \\
        \Funbalre( \CNE {\times} \EKD \, {\times} \DMC, \Cat{A}). & 
      \end{tikzcd}
    \end{equation}
    This induces a natural isomorphism of functors 
    \begin{equation}
      \label{eq:m-for-bimdod}
      m_{\Cat{M},\Cat{N},\Cat{K}}:\quad
      \varphi_{\Cat{K} \boxtensor{\Cat{D}} \Cat{M}, \Cat{N}} \circ \varphi_{\Cat{M} \boxtensor{\Cat{C}} \Cat{N}, \Cat{K}}
      \rightarrow \varphi_{\Cat{M}, \Cat{N} \boxtensor{\Cat{E}}\Cat{K}} \,,
    \end{equation}
    where for simplicity we omitted the associator in $\BimCat$.
  \item[{\rm (iv)}] 
    For a bimodule category $\CMC$,  
    the following diagram of equivalences commutes up to a canonical natural isomorphism:
    \begin{equation}
      \label{eq:towards-kappa}
      \begin{tikzcd}
        \Funbalre(\Cat{C} \times \CMC, \Cat{A}) \ar{r}{\widetilde{\varphi}_{\Cat{C},\Cat{M}}} 
        & \Funbalre(\CMC \times \Cat{C}, \Cat{A})  \ar{d}{\phi_{2}} \\
        \Funbalre(\CMC, \Cat{A}) \ar{u}{\phi_{1}} \ar{r}{\id} & \Funbalre(\CMC, \Cat{A}).
      \end{tikzcd}
    \end{equation}
    Here the functor $\phi_{1}$ maps a balanced functor $\mathsf{F}\colon \CMC \,{\to}\, \Cat{A}$ 
    to the functor $\Cat{C} \times \Cat{M} \ni c \times m \mapsto \mathsf{F}(c \act m)$, 
    and $\phi_{2}$ is defined by mapping a balanced functor 
    $\mathsf{G}:\Cat{M} \times \Cat{C}\, {\to}\,\Cat{A}$ to the functor 
    $\Cat{M} \,{\ni}\, m \,{\mapsto}\, \mathsf{G}(m {\times} \unit_{\Cat{C}})$.
    The diagram {\rm\eqref{eq:towards-kappa}} defines a natural isomorphism 
    $\kappa_{\Cat{M}}\colon \varphi_{\Cat{C}, \Cat{M}} \,{\to}\, \id$.
  \end{propositionlist}
\end{proposition}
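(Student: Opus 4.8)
The strategy is to treat parts (i)--(iv) as a cascade, establishing the equivalences of functor categories at the level of \emph{multi-balanced} functors first, and then transporting everything to the \ct{} via its universal property (Definition \ref{definition:circ-tensor-prod} and Proposition \ref{proposition:existence-circular-tensor}). This is the pattern already used in Proposition \ref{proposition:circtens-2-fun}: work with $\Funbalre$, where the constructions are completely explicit, and then invoke universality to get the induced functor on the \ct{} together with its uniqueness up to unique natural isomorphism.

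\textbf{Part (i).}
Given a right exact $\Cat{C}$-$\Cat{D}$-balanced functor $\mathsf{F}\colon \CMD\times\DNC\to\Cat{A}$ with balancing constraints $\gamma$ (for $\Cat{D}$, as in \eqref{eq:D-bal}) and $\beta$ (for $\Cat{C}$, as in \eqref{eq:C-mulit-bal}), I define $\widetilde{\varphi}_{\Cat{M},\Cat{N}}(\mathsf{F})\colon \DNC\times\CMD\to\Cat{A}$ to be the functor $(n,m)\mapsto\mathsf{F}(m,n)$, equipped with the balancing constraints in the swapped order: the $\Cat{C}$-balancing of the new functor is $\beta$ read as a constraint on the second argument, and the $\Cat{D}$-balancing is $\gamma$ read on the first argument. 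The compatibility diagram \eqref{eq:mulit-bal-comm-diag} for the swapped functor is literally the same commuting square, so this is well defined; on balanced natural transformations it is the identity assignment. That $\widetilde{\varphi}_{\Cat{N},\Cat{M}}\circ\widetilde{\varphi}_{\Cat{M},\Cat{N}}=\id$ is then immediate, since swapping twice returns the original data on the nose. This part is essentially bookkeeping: the only thing to check carefully is that the roles of the pentagon/triangle axioms for $\beta$ and $\gamma$ are preserved under the swap, which they are by inspection.

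\textbf{Part (ii).}
Here I use that, by Proposition \ref{proposition:existence-circular-tensor} together with the relative-tensor-product identification, $\circtensor(\CMD\boxtensor{\Cat{D}}\DNC)$ is a \ct{} of the cyclically composable string $\CMD\times\DNC$, i.e.\ it is universal among right exact multi-balanced functors out of $\CMD\times\DNC$, and likewise $\circtensor(\DNC\boxtensor{\Cat{C}}\CMD)$ is universal among those out of $\DNC\times\CMD$. Composing the universal multi-balanced functor $\mathsf{B}_{\Cat{N},\Cat{M}}\colon\DNC\times\CMD\to\circtensor(\DNC\boxtensor{\Cat{C}}\CMD)$ with $\widetilde{\varphi}_{\Cat{N},\Cat{M}}$ gives a right exact multi-balanced functor out of $\CMD\times\DNC$; by the universal property of $\circtensor(\CMD\boxtensor{\Cat{D}}\DNC)$ this factors, uniquely up to unique natural isomorphism, through a right exact functor $\varphi_{\Cat{M},\Cat{N}}\colon\circtensor(\CMD\boxtensor{\Cat{D}}\DNC)\to\circtensor(\DNC\boxtensor{\Cat{C}}\CMD)$. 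Running the same argument with the roles of $\Cat{M}$ and $\Cat{N}$ exchanged produces $\varphi_{\Cat{N},\Cat{M}}$, and the relation $\widetilde{\varphi}_{\Cat{N},\Cat{M}}\circ\widetilde{\varphi}_{\Cat{M},\Cat{N}}=\id$ from part (i), combined with the uniqueness clause in the universal property, forces $\varphi_{\Cat{N},\Cat{M}}\circ\varphi_{\Cat{M},\Cat{N}}\simeq\id$; that $\varphi_{\Cat{M},\Cat{N}}$ is an equivalence is then automatic. The key point is that the quasi-inverse data $(\Psi,\varphi,\kappa)$ built into the definition of the \ct{} turn ``factors through'' into a genuine functor canonically.

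\textbf{Parts (iii) and (iv).}
For (iii), the outer triangle \eqref{eq:comm-equiv} is a statement purely about $\Funbalre$ of strings of length three, and it commutes \emph{strictly}: both composites send $\mathsf{F}(m,n,k)$ to $\mathsf{F}$ precomposed with the same cyclic relabeling of arguments, carrying the three balancing constraints along in the same way (the associator in $\BimCat$ only enters when one re-reads a length-three string as an iterated relative tensor product, and is suppressed as stated). Transporting this to the \ct{} via the universal properties of the relevant $\circtensor$'s — exactly as in part (ii) — yields the natural isomorphism $m_{\Cat{M},\Cat{N},\Cat{K}}$ of \eqref{eq:m-for-bimdod}, with uniqueness again coming from the universal property. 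For (iv), the functors $\phi_1$ and $\phi_2$ are the explicit comparison functors realizing the equivalence $\Funbalre(\CMC,\Cat{A})\simeq\Funbalre(\Cat{C}\times\CMC,\Cat{A})$ (one direction uses the left module constraint $\lambda$, the other the right module constraint $\rho$ of $\CMC$), and the diagram \eqref{eq:towards-kappa} commutes up to the canonical isomorphism supplied by the triangle axiom \eqref{eq:13} for the bimodule constraint of $\CMC$ together with the triangle axiom \eqref{eq:balanced-unit} for balancedness; transporting this canonical $2$-isomorphism along $\circtensor$ defines $\kappa_{\Cat{M}}\colon\varphi_{\Cat{C},\Cat{M}}\to\id$.

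\textbf{Main obstacle.}
The genuinely delicate point is not the existence of any of the equivalences — those are all transparent on $\Funbalre$ — but verifying that the compatibility square \eqref{eq:mulit-bal-comm-diag} is respected at every stage, and in particular that the associator of $\BimCat$ does \emph{not} intervene when one passes between a cyclically composable string and its relative tensor products. Making the bookkeeping of which balancing constraint plays which role under the swap completely unambiguous, so that the uniqueness clauses in the universal properties can be invoked to pin down $m_{\Cat{M},\Cat{N},\Cat{K}}$ and $\kappa_{\Cat{M}}$, is where the real care is needed; once that is in place, (iii) and (iv) are formal consequences of (i), (ii) and the triangle/pentagon axioms already recorded.
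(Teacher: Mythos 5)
Your proposal is correct and follows essentially the same route as the paper: part (i) is the explicit swap of arguments and balancing data, part (ii) transfers it to the \ct{} via the universal property (the paper phrases this as a commuting square of equivalences exhibiting $\circtensor \DNC \boxtensor{\Cat{C}} \CMD$ as another \ct{} of the same string, which is the functor-category side of your factorization of $\widetilde{\varphi}_{\Cat{N},\Cat{M}}(\mathsf{B}_{\Cat{N},\Cat{M}})$), and parts (iii) and (iv) follow, as in the paper, from the uniqueness up to unique natural isomorphism of functors induced by that universal property. Your write-up is in fact more detailed than the paper's own proof, which disposes of (i) as "clear" and of (iii)--(iv) in one sentence.
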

\begin{proof}
  The first part is clear. 
  For the second part consider the following diagram 
  \begin{equation}
    \label{eq:comm-diag-circ-equiv}
    \begin{tikzcd}
      \Funbalre(\CMD \times \DNC, \Cat{A}) 
      \ar{r}{\widetilde{\varphi}_{\Cat{M},\Cat{N}}} \ar{d}{\Psi_{\Cat{M},\Cat{N}}} 
      &\Funbalre(\DNC \times \CMD, \Cat{A}) \ar{d}{\Psi_{\Cat{N},\Cat{M}}}\\
      \Funre(\circtensor \CMD \boxtensor{\Cat{D}} \DNC, \Cat{A}) \ar{r}{} &
      \Funre(\circtensor \DNC \boxtensor{\Cat{C}} \CMD, \Cat{A}).
    \end{tikzcd}
  \end{equation}
  Since all arrows in this diagram are equivalences of categories, the diagram establishes 
  $\circtensor \DNC \boxtensor{\Cat{C}} \CMD$ as a \ct\ of $\CMD \times \DNC$. 
  By the universal property of the \ct\, this induces the equivalence 
  $\varphi_{\Cat{M},\Cat{N}}\colon \circtensor \CMD \boxtensor{\Cat{D}} \DNC  \rightarrow   
  \circtensor \DNC \boxtensor{\Cat{C}} \CMD$.
  \\
  The natural isomorphism in the remaining parts of the proposition are also obtained 
  from the universal property of the \ct: Between any two equivalences of \ct\ of 
  a cyclically composable string of bimodule categories, 
  that are induced by the universal property, there is a unique natural isomorphism. 
\end{proof}

\begin{theorem}
  \label{theorem:ct-is-3trace}
  The \ct\ provides a $3$-trace on the tricategory $\BimCat$ with values in $\Categ$. 
  The $3$-trace is representable by the bimodule categories $\tCCC$ for finite tensor categories $\Cat{C}$.
\end{theorem}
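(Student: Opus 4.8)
The plan is to verify that the data assembled in Sections~\ref{sec3} and~\ref{sec4} satisfy Definition~\ref{definition:3-trace}, using the universal property of the \ct\ as the organising principle throughout. The $2$-functors $\tr_{\Cat C}$ are already provided by Proposition~\ref{proposition:circtens-2-fun}: to each finite tensor category $\Cat C$ we assign the $2$-functor $\circtensor\colon \BimCat(\Cat C,\Cat C)\to\Categ$, and the coherence $2$-morphisms of this $2$-functor are precisely the $\varphi_{\mathsf G,\mathsf F}$ and $\varphi_{\Cat M}$ constructed there. The adjoint equivalence $\varphi(x,y)$ of part~(i) of the definition is supplied, for $x=\Cat C$ and $y=\Cat D$, by Proposition~\ref{proposition:circ-equiv-1}\,(ii): for $1$-morphisms $\CMD$ and $\DNC$ we have the equivalence $\varphi_{\Cat M,\Cat N}\colon \circtensor(\CMD\boxtensor{\Cat D}\DNC)\xrightarrow{\simeq}\circtensor(\DNC\boxtensor{\Cat C}\CMD)$, which is an adjoint equivalence because it arises from the universal property of a \ct\ and such equivalences can be (and by our conventions are) canonically promoted to adjoint equivalences. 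Its pseudo-naturality in $\Cat M$ and $\Cat N$ is again forced by the universal property, since both legs of the relevant square are induced functors out of a category of balanced functors. The invertible modification $m$ of part~(ii) is exactly $m_{\Cat M,\Cat N,\Cat K}$ from Proposition~\ref{proposition:circ-equiv-1}\,(iii), and the modification $\kappa$ of part~(iii) is $\kappa_{\Cat M}$ from Proposition~\ref{proposition:circ-equiv-1}\,(iv).

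First I would assemble these pieces and check that the source and target $1$-morphisms of $m$ and $\kappa$ match the ones demanded by diagrams \eqref{eq:25-eval} and \eqref{eq:kappa}. Here one must be careful about the associators in $\BimCat$: the unlabelled arrows in \eqref{eq:25-eval} are the images under $\circtensor$ of the associator $2$-morphisms of $\BimCat$, and in Proposition~\ref{proposition:circ-equiv-1}\,(iii) these were suppressed; reinstating them is a matter of bookkeeping. The key conceptual point, which makes all of this go through, is the following uniqueness statement, already used repeatedly in Section~\ref{sec3}: for any cyclically composable string $\underline{\Cat M}$ of bimodule categories, any two equivalences between \ct s of $\underline{\Cat M}$ that are built from the universal balancing functors, the $\widetilde\varphi$'s of Proposition~\ref{proposition:circ-equiv-1}\,(i), and the tricategorical structure of $\BimCat$, are related by a \emph{unique} natural isomorphism. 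This is because such an equivalence is determined, via the universal property \eqref{eq:adj-equ-multi-bal}, by a coherent identification of the corresponding categories of multi-balanced functors, and on the level of multi-balanced functors all the relevant diagrams commute for formal reasons (a multi-balanced functor is literally a functor equipped with balancing constraints, and switching arguments or reassociating acts on the underlying linear category in a way that is visibly compatible).

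The two axioms of a $3$-trace, \eqref{eq:axiom-3trace} and \eqref{eq:adj-equiv-varphi}, are then proved by invoking this uniqueness. For \eqref{eq:axiom-3trace} one observes that both the prescribed composite of $m$'s and the identity $2$-morphism are natural isomorphisms of functors between the same two \ct s, $\circtensor(4\,3\,2\,1)$ in two different bracketings and cut points; since both are induced from the universal property as above, and the uniqueness statement says there is only one such, they agree. The same argument handles \eqref{eq:adj-equiv-varphi}: the composite displayed there is a natural isomorphism between $\varphi(\CMD,\DNC)$-composed-with-units and the identity endo-equivalence, and it must coincide with the canonical one because both are formally composable isomorphisms between traces of the string $\CMD,\DNC$ and the coherence Proposition~\ref{proposition:2-graph-connected} (or, more elementarily, the uniqueness clause of Proposition~\ref{proposition:circ-equiv-1}) pins it down. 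The final sentence of the theorem, representability, is immediate from Proposition~\ref{proposition:circtens-2-fun}\,(ii), which gives $\circtensor\,\simeq\,\BimCat(\tCCC,-)$ as $2$-functors; one just notes that this equivalence is compatible with the $\varphi$, $m$, $\kappa$ data, again by uniqueness.

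The main obstacle I anticipate is not any single computation but the verification that the modifications $m$ and $\kappa$, which in Proposition~\ref{proposition:circ-equiv-1} were constructed one composable string at a time, genuinely assemble into \emph{modifications} in the sense required — i.e.\ that they are natural in the $1$-morphisms $f_i$ and compatible with the pseudo-naturality squares of the adjoint equivalences $\varphi$. This is where the suppressed associators of $\BimCat$ and the coherence of the tricategory (the $3$-morphisms $\pi,\mu,\lambda,\rho$) actually enter, and one has to check that the diagrams defining $m$ and $\kappa$ are well-posed in the bicategory $\Categ$ once everything is spelled out. My strategy for this obstacle is to not check it by hand but to deduce it from Proposition~\ref{proposition:2-graph-connected}: that proposition establishes $1$- and $2$-connectedness of the formal $2$-graph $\Cat G$ of traces, and the modification axioms for $m$ and $\kappa$ are precisely instances of the $2$-connectedness of $\Cat G$ after one has exhibited the relevant squares as sub-$2$-graphs. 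Thus the real content is front-loaded into the earlier coherence result, and the proof of Theorem~\ref{theorem:ct-is-3trace} reduces to identifying our concrete $\varphi,m,\kappa$ with the formal ones and citing Propositions~\ref{proposition:circtens-2-fun}, \ref{proposition:circ-equiv-1} and~\ref{proposition:2-graph-connected}.
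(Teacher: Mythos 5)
Your main line of argument coincides with the paper's own proof: the $2$-functors come from Proposition~\ref{proposition:circtens-2-fun}, the structural data $\varphi$, $m$, $\kappa$ are exactly those of Proposition~\ref{proposition:circ-equiv-1}, the two axioms \eqref{eq:axiom-3trace} and \eqref{eq:adj-equiv-varphi} are verified by the uniqueness of natural isomorphisms between equivalences of \ct s induced by the universal property, and representability is read off from Proposition~\ref{proposition:circtens-2-fun}\,(ii). Up to that point the proposal is correct and essentially identical to the argument in the paper.

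The one step that does not work as stated is your strategy for the ``main obstacle'': you propose to deduce that $m$ and $\kappa$ genuinely assemble into modifications (naturality in the $f_i$, compatibility with the pseudo-naturality squares of $\varphi$) from Proposition~\ref{proposition:2-graph-connected}. That is circular: Proposition~\ref{proposition:2-graph-connected} is a coherence statement about a tricategory \emph{already equipped with} a $3$-trace, and its proof uses the $3$-trace axioms themselves --- in particular the adjunction move \eqref{eq:adjunction-move} is nothing but axiom \eqref{eq:adj-equiv-varphi}, which is among the things Theorem~\ref{theorem:ct-is-3trace} is supposed to establish for $\circtensor$. So the coherence result cannot carry the ``real content'' here; it is a consequence, not an input. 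The repair is the tool you already invoke elsewhere in the proposal and which the paper uses throughout: every comparison $2$-morphism between functors out of a \ct\ (or out of a \ct\ of a cyclically composable string) that is induced by the universal property is \emph{unique}, and this uniqueness yields not only the axioms \eqref{eq:axiom-3trace} and \eqref{eq:adj-equiv-varphi} but also the naturality and modification properties of $m$ and $\kappa$, exactly as in the proof of Proposition~\ref{proposition:circ-equiv-1}. With that substitution your argument closes and agrees with the paper's.
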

\begin{proof}
  According to Proposition \ref{proposition:circtens-2-fun}, the \ct\ provides the
  2-functors $\BimCat(\Cat{C},\Cat{C}) \rightarrow \Categ$. The structural data of a 3-trace
  are provided in Proposition \ref{proposition:circ-equiv-1}. It remains to be shown that
  the two axioms in the definition of a 3-trace are satisfied. 
  To this end note that the natural isomorphisms $m$ and $\kappa$ in
  Proposition \ref{proposition:circ-equiv-1} are defined \emph{uniquely}. This implies that
  for a string of four cyclically composable bimodule categories, where there are two ways to 
  define $m$ by the universal property, these two definitions have to agree. This shows that
  axiom (\ref{eq:axiom-3trace}) is satisfied for the \ct. The other axiom is shown similarly. 
\end{proof}

%%%%%%%%%%%%%%%%%%%%%%%%%%%%%%%%%%%%%%%%%%%%%%%%%%%%%%%%%%%%%%%%%%%%%%%%%%%%%%%% 

\vskip 3.3em
% \newpage

{\small \noindent{\sc Acknowledgments:}
  We are grateful to Alessandro Valentino and to Christopher Schommer-Pries for helpful discussions.
  \\
  JF is supported by VR under project no.\ 621-2013-4207. CS is partially supported
  by the Collaborative Research Centre 676 ``Particles, Strings and the Early
  Universe - the Structure of Matter and Space-Time'' and by the DFG Priority
  Programme 1388 ``Representation Theory''. 
  GS thanks the MPIM for excellent working conditions and Hamburg University for 
  great hospitality. We thank the Erwin-Schr\"odinger-Institute (ESI) for the
  hospitality during the program ``Modern trends in topological field theory''
  while this work was started; this program was also supported by the network
  ``Interactions of Low-Dimensional Topology and Geometry with Mathematical
  Physics'' (ITGP) of the European Science Foundation.
}

 \newpage
%%%%%%%%%%%%%%%%%%%%%%%%%%%%%%%%%%%%%%%%%%%%%%%%%%%%%%%%%%%%%%%%%%%%%%%%%%%%%%%% 

  \bibliographystyle{halpha}%{hplain}     % (uses file "plain.bst")  
  \bibliography{bimod}

%%%%%%%%%%%%%%%%%%%%%%%%%%%%%%%%%%%%%%%%%%%%%%%%%%%%%%%%%%%%%%%%%%%%%%%%%%%%%%%% 
\end{document}